\newcommand {\Real}{\ensuremath{{\mathbb{R}}}}
\newcommand {\Natural}{\ensuremath{{\mathbb{N}}}}
\newcommand{\Q}{\ensuremath{\mathcal Q}}
\newcommand{\setS}{\ensuremath{\mathbb S}}
\newcommand{\setE}{\ensuremath{\mathcal E}}
\newcommand{\G}{\ensuremath{\mathcal G}}
\newcommand{\N}{\ensuremath{\mathcal N}}
\newcommand{\ex}{\ensuremath{{\mathbf{x}}}}
\newcommand{\one}{\ensuremath{{\mathbf{1}}}}
\newtheorem{theorem}{Theorem}
\newtheorem{corollary}{Corollary}
\newtheorem{lemma}{Lemma}
\newtheorem{fact}{Fact}
\newtheorem{definition}{Definition}
\newtheorem{remark}{Remark}
\newtheorem{assumption}{Assumption}
\newenvironment{proof}{\noindent {\bf Proof.}}{\hfill \hspace*{1pt}\hfill$\blacksquare$}
\begin{document}
\title{Sufficient conditions on observability grammian 
for synchronization in arrays of coupled time-varying linear
systems}
\author{S. Emre Tuna\\
{\small {\tt tuna@eee.metu.edu.tr}} }
\maketitle

\begin{abstract}
Synchronizability of stable, output-coupled, identical, time-varying
linear systems is studied. It is shown that if the observability
grammian satisfies a persistence of excitation condition, then there
exists a bounded, time-varying linear feedback law that yields
exponential synchronization for all fixed, asymmetrical
interconnections with connected graphs. Also, a weaker condition on
the grammian is given for asymptotic synchronization. No assumption is
made on the strength of coupling. Moreover, related to the main
problem, a particular array of output-coupled systems that is
pertinent to much-studied consensus problems is investigated. In this
array, the individual systems are integrators with identical,
time-varying, symmetric positive semi-definite output
matrices. Trajectories of this array are shown to stay bounded using a
time-invariant, quadratic Lyapunov function. Also, sufficient
conditions on output matrix for synchronization are provided. All of
the results in the paper are generated for both continuous time and
discrete time.
\end{abstract}

\section{Introduction}
When do the trajectories of a number of coupled individual systems
converge to each other?  This question outlines the multifaceted
problem of {\em synchronization stability}. Unknotting this problem
requires understanding the interplay of two pieces: the set of
individual systems' dynamics and the (varying) topology of their
coupling, i.e. who influences whom and how strongly. The general
problem is insuperably difficult, which has led people to a number of
simplifications, justifiable for certain applications. For instance,
when the individual system dynamics is taken to be an integrator, by
using convexity arguments, trajectories have been shown to converge to
a fixed point in space as long as the (directed, time-varying)
interconnection satisfies a fairly weak connectedness
condition. Since, once synchronized, the righthand sides of the
systems vanish, the word {\em consensus} is used when referring to
this case; see, for instance,
\cite{moreau05,ren05,angeli06,lin07}. Another direction of
investigation is fueled by the fact that the speed/occurrence of
synchronization is related to the coupling (strength) between the
individual systems. Studies concentrated on understanding this
relation have been fruitful and significant results have emerged. We
now know that the spectrum of the interconnection matrix is where we
have to look at if we want to measure the strength of coupling in
order to determine whether synchronization will take place or
not. Roughly speaking, under the assumption that some Lyapunov
function (related to the individual system dynamics only) exists, one
can guarantee stability of synchronization if the coupling strength is
larger than some threshold; see, for instance,
\cite{wu95,pecora98,wu05,belykh06}.  There are numerous other
interesting research directions accommodating notable works in
synchronization stability. We refer the interested reader to the
surveys \cite{strogatz01,wang02}, \cite[Sec. 5]{boccaletti06}.

A fundamental case in synchronization stability concerns with
output-coupled identical linear systems under fixed
interconnection. The problem is considered in \cite{tuna08} for
time-invariant discrete-time systems and in \cite{trac08a} for
continuous-time systems (as a generalization of Luenberger observer)
leading to the following result: ``If an individual system is
detectable from its output and its system matrix is neutrally stable,
then there exists a linear feedback law under which the trajectories
of the coupled replicas of the individual system exponentially
synchronize provided that the (directed) graph representing the
interconnection is connected.'' We emphasize that (i) the result needs
no assumption on the strength of coupling and (ii) synchronizing
feedback law is independent of the number of systems and their
interconnection. In this paper we extend this result for time-varying
linear systems.

For a time-varying pair $(C,\,A)$, where $A(\cdot)$ is the system
matrix and $C(\cdot)$ is the output matrix, we first define {\em
synchronizability} (with respect to set of all connected
interconnections.)  Roughly, a pair $(C,\,A)$ is synchronizable if one
can find a bounded time-varying linear feedback law $L(\cdot)$ under
which the trajectories of the coupled replicas of the individual
system described by triple $(C,\,A,\,L)$ synchronize for all connected
interconnections. Then we study the conditions that would imply
synchronizability. The assumptions and results almost parallel the
time-invariant case. The assumption we make on the system matrix is
that its state transition matrix is bounded in both forward and
backward time\footnote{Bounded state transition matrix assumption can
be encountered in seemingly different problems in the literature
whenever observability is at stake; see, for instance,
\cite{chen97}.}, which yields (considering trajectories) sustained and
bounded oscillations. Boundedness in forward time is necessary for
stability because we make no assumption on the strength of
coupling. Boundedness in backward time can be relaxed at the expense
of complicacy of analysis and need for additional technical
assumptions on pair $(C,\,A)$. For simplicity, therefore, we choose to
keep it.  One of the contributions of this work are in establishing
the following results:
\begin{itemize}
\item 
If pair $(C,\,A)$ is asymptotically observable then it is
synchronizable.
\item  If pair $(C,\,A)$ is uniformly observable then it is exponentially 
synchronizable.\footnote{Along with these results, we also 
provide a synchronizing feedback law $L(\cdot)$ in the paper.}
\end{itemize}
{\em Asymptotic observability} we define as that the integrand of the
observability grammian satisfies a general (yet technical)
condition. This condition, which we name {\em sufficiency of excitation
\footnote{See Definition~\ref{def:suff}.}},
is significantly weaker than persistence of excitation and allows the
following result, cf.~\cite[Thm.~2.5.1]{sastry89}.
\begin{itemize}
\item Let $Q$ be bounded and $Q(t)=Q(t)^{T}\geq 0$ for all $t\geq 0$. 
Linear system $\dot{x}=-Q(t)x$ satisfies $\lim_{t\to\infty}x(t)=0$ if
$Q$ is sufficiently exciting.
\end{itemize}
Uniform observability, on the other
hand, is quite a standard concept, which is more or less equivalent to
that the integrand (summand) of the observability grammian is
persistently exciting.

To obtain the above listed results we first study synchronization
stability of a particular type of array. This array is pertinent to
consensus problem (for trajectories are static once synchronized) yet
different from the usual array of interest in consensus problems
\cite{blondel05}.  Our second contribution in this paper is 
in analyzing this new type of consensus array and, consequently,
unraveling two arrays' similarities and differences. In addition, we
investigate the stability of their union. The array dynamics generally
studied in consensus problems is
\begin{eqnarray}\label{eqn:intro1}
\dot{x}_{i}=\sum_{j=1}^{p}\gamma_{ij}(t)(x_{j}-x_{i})
\end{eqnarray}
where $x_{i}\in\Real^{n}$ is the state of the $i$th system and
$\gamma_{ij}(t)\geq 0$ for all $t$. What is known about this array is
that its trajectories are bounded. In fact, the convex hull of the
states ${\rm co}\{x_{1},\,\ldots,\,x_{p}\}$ is forward invariant
regardless of the evolution of $\gamma_{ij}(\cdot)$. Moreover, if
certain connectedness property is satisfied by the graph described by
$\{\gamma_{ij}\}$, then trajectories
$x_{i}(\cdot)$ meet at some common point, i.e.
reach consensus. Finally, in general, there does not exist a
quadratic Lyapunov function to establish stability;
so the convex hull of the states is used instead
\cite{moreau05}. The array considered in this paper is
\begin{eqnarray}\label{eqn:intro2}
\dot{x}_{i}=\sum_{j=1}^{p}\gamma_{ij}(y_{j}-y_{i})\,,\quad y_{i}=Q(t)x_{i}
\end{eqnarray}
where time-varying output matrix $Q(\cdot)$ is symmetric positive
semi-definite and $\gamma_{ij}$ is fixed. Below we list our findings
residing in Section~\ref{sec:spsd}.
\begin{itemize}
\item Like array~\eqref{eqn:intro1}, 
trajectories of array~\eqref{eqn:intro2} are bounded.
\item Unlike array~\eqref{eqn:intro1}, there exists a quadratic
Lyapunov function\footnote{However,
the convex hull is no longer forward invariant.} for array~\eqref{eqn:intro2}.
\item For $Q$ sufficiently exciting, trajectories 
of array~\eqref{eqn:intro2} reach consensus for all connected
interconnections. The point of consensus is independent of the
evolution of $Q$.
\end{itemize}
We also look at the union of the two cases
$\dot{x}_{i}=\sum_{j=1}^{p}\gamma_{ij}(t)(y_{j}-y_{i})$,
$y_{i}=Q(t)x_{i}$. We find that unbounded trajectories may result from
this situation, hence stability is no longer guaranteed.

The outline of the paper is as follows. After introducing notation and
basic definitions, we define synchronizability and give the formal
problem statement for continuous-time linear time-varying systems in
Section~\ref{sec:ps}. This section also is where we draw the simple
link between synchronization of time-varying linear systems and
consensus of array~\eqref{eqn:intro2}. In Section~\ref{sec:spsd} we
establish the stability of array~\eqref{eqn:intro2} via a quadratic
Lyapunov function and construct (observability) conditions on $Q$
yielding consensus. In Section~\ref{sec:main} we interpret these
conditions through the observability grammian of time-varying pair
$(C,\,A)$ and establish our main results. Finally, in
Section~\ref{sec:dt}, we generate the discrete-time versions of the
continuous-time results.

\section{Notation and definitions}
Let $\Natural$ denote the set of nonnegative integers and $\Real_{\geq
0}$ the set of nonnegative real numbers. The meaning of
$\Natural_{\geq k}$ is the obvious. Let $|\cdot|$ denote (induced)
2-norm.  Identity matrix in $\Real^{n\times n}$ is denoted by $I_{n}$.
The set of all symmetric positive semi-definite (SPSD) matrices in
$\Real^{n\times n}$ is denoted by $\Q_{n}$. We also define
$\overline{\Q}_{n}:=\{R\in\Q_{n}:|R|\leq 1\}$. Let $\one\in\Real^{p}$
denote the vector with all entries equal to one. The smallest and
largest singular values of $A\in\Real^{m\times n}$ are, respectively,
denoted by $\sigma_{\rm min}(A)$ and $\sigma_{\rm max}(A)$. {\em
Kronecker product} of $A\in\Real^{m\times n}$ and $B\in\Real^{p\times
q}$ is
\begin{eqnarray*}
A\otimes B:=
\left[\!\!
\begin{array}{ccc}
a_{11}B & \!\cdots\! & a_{1n}B\\
\vdots  &\! \ddots\! & \vdots\\
a_{m1}B &\! \cdots\! & a_{mn}B
\end{array}\!\!
\right]
\end{eqnarray*}
Kronecker product comes with the following properties: $(A\otimes B)(C\otimes
D)=(AC)\otimes(BD)$ (provided that products $AC$ and $BD$ are allowed);
$A\otimes B+A\otimes C=A\otimes(B+C)$ (for $B$ and $C$ that are of the
same size); and $(A\otimes B)^{T}=A^{T}\otimes B^{T}$. Moreover, the singular
values of $(A\otimes B)$ equal the (pairwise) product of singular values of
$A$ and $B$.

A ({\em directed}) {\em graph} is a pair $(\N,\,\setE)$ where $\N$ is
a nonempty finite set (of {\em nodes}) and $\setE$ is a finite
collection of ordered pairs ({\em edges}) $(n_{i},\,n_{j})$ with
$n_{i},\,n_{j}\in\N$. A {\em directed path} from $n_{1}$ to $n_{\ell}$
is a sequence of nodes $(n_{1},\,n_{2},\,\ldots,\,n_{\ell})$ such that
$(n_{i},\,n_{i+1})$ is an edge for
$i\in\{1,\,2,\,\ldots,\,\ell-1\}$. A graph is {\em connected} if it
has a node to which there exists a directed path from every other
node.\footnote{Note that this definition of connectedness for directed
graphs is weaker than strong connectivity and stronger than weak
connectivity.}
The graph of a matrix $M:=[m_{ij}]\in\Real^{p\times p}$ is the pair
$(\N,\,\setE)$, where $\N =\{n_{1},\,n_{2},\,\ldots,\,n_{p}\}$ and
$\setE$ is such that $(n_{i},\,n_{j})\in\setE$ iff $m_{ij}>0$. Matrix
$M$ is said to be {\em connected} when its graph is connected.

Throughout the paper $\Gamma:=[\gamma_{ij}]\in\Real^{p\times p}$ will
represent an {\em interconnection (in the continuous-time sense)}
satisfying $\gamma_{ij}\geq 0$ for $i\neq j$ and
$\gamma_{ii}=-\sum_{j\neq i}\gamma_{ij}$ for all $i$. It immediately
follows that $\lambda=0$ is an eigenvalue with eigenvector $\one$, that is, 
$\Gamma\one=0$. For $\Gamma$ connected, eigenvalue $\lambda=0$ is distinct and all the other
eigenvalues have real parts strictly negative.  Let $r\in\Real^{p}$ satisfy
\begin{subeqnarray}\label{eqn:r}
r^{T}\Gamma &=& 0\\
r^{T}\one &=& 1\,.
\end{subeqnarray}
Then $r$ is unique (for $\Gamma$ connected) and satisfies
$\lim_{t\to\infty}e^{\Gamma{t}}={\one}r^{T}$. Also, $r$ has no
negative entry. We denote by $\G_{>0}$ the set of all connected
interconnections, i.e. $\G_{>0}=\{\Gamma\in\Real^{p\times p}:
\Gamma\ \mbox{connected interconnection},\ p=2,\,3,\,\ldots\}$.

Matrix $\Lambda:=[\lambda_{ij}]\in\Real^{p\times p}$ denotes an {\em
interconnection (in the discrete-time sense)} satisfying
$\lambda_{ij}\geq 0$ for all $i,\,j$ and $\sum_{j}\lambda_{ij}=1$ for
all $i$. It follows that $\lambda=1$ is an eigenvalue with eigenvector
$\one$, that is, $\Lambda\one=\one$. For a connected $\Lambda$,
eigenvalue $\lambda=1$ is distinct and all the other eigenvalues lie
strictly within the unit circle.  Let $r\in\Real^{p}$ satisfy
\begin{subeqnarray}\label{eqn:rdt}
r^{T}\Lambda &=& r^{T}\\
r^{T}\one &=& 1\,.
\end{subeqnarray}
Then $r$ is unique (for $\Lambda$ connected) and satisfies
$\lim_{k\to\infty}\Lambda^{k}={\one}r^{T}$. Also, $r$ has no negative
entry. By slight abuse of notation (yet with a negligible risk of
ambiguity) we will let $\G_{>0}$ also denote the set of all
(discrete-time) connected interconnections $\Lambda$.

Let $\setS\in\{\Real_{\geq 0},\,\Natural\}$. Given maps
$\xi_{i}:\setS\to\Real^{n}$ for $i=1,\,2,\,\ldots,\,p$ and a map
$\bar\xi:\setS\to\Real^{n}$, the elements of the set
$\{\xi_{i}(\cdot):i=1,\,2,\,\ldots,\,p\}$ are said to {\em synchronize
to} $\bar{\xi}(\cdot)$ if $|\xi_{i}(s)-\bar\xi(s)|\to 0$ as
$s\to\infty$ for all $i$. They are said to {\em synchronize} if they
synchronize to some $\bar{\xi}(\cdot)$. Moreover, if there exists a
pair of positive real numbers $(c,\,\alpha)$ such that
$\max_{i}|\xi_{i}(s)-\bar\xi(s)|\leq ce^{-\alpha s}$ for all $s$, then
$\xi_{i}(\cdot)$ are said to {\em exponentially} synchronize.

\section{Problem statement}\label{sec:ps}
For a given interconnection $\Gamma=[\gamma_{ij}]\in\Real^{p\times p}$, let an array of
$p$ linear systems be
\begin{subeqnarray}\label{eqn:array}
{\dot{x}}_{i} &=& A(t)x_{i}+u_{i} \label{eqn:a}\\
 y_{i} &=& C(t)x_{i}\label{eqn:b}\\
 z_{i} &=& \displaystyle \sum_{j\neq i}\gamma_{ij}(y_{j}-y_{i})\label{eqn:c}
\end{subeqnarray}
where $x_{i}\in\Real^{n}$ is the {\em state}, $u_{i}\in\Real^{n}$ is
the {\em input}, $y_{i}\in\Real^{m}$ is the {\em output}, and
$z_{i}\in\Real^{m}$ is the {\em coupling} of the $i$th system for
$i=1,\,2,\,\ldots,\,p$. For each $t\in\Real$ we have
$A(t)\in\Real^{n\times n}$ and $C(t)\in\Real^{m\times n}$. The
solution of $i$th system at time $t\geq 0$ is denoted by $x_{i}(t)$.
We denote by $\Phi_{A}(\cdot,\,\cdot)$ 
the state transition matrix for $A$,
i.e. the unique solution of the matrix differential equation
\begin{eqnarray*}
\dot{\Phi}_{A}(t,\,t_{0})=A(t)\Phi_{A}(t,\,t_{0})
\end{eqnarray*} 
with $\Phi_{A}(t_{0},\,t_{0})=I_{n}$. Also, recall that the
observability grammian of pair $(C,\,A)$ is given by
\begin{eqnarray}\label{eqn:grammian}
W_{\rm o}(t_{0},\,t):=
\int_{t_{0}}^{t}\Phi_{A}^{T}(\tau,\,t_{0})C^{T}(\tau)C(\tau)
\Phi_{A}(\tau,\,t_{0})d\tau
\end{eqnarray}
for $t_{0},\,t\in\Real$. We will henceforth assume that the integrand
of the grammian is Riemann-integrable.

\begin{definition}[Synchronizability]\label{def:sync}
Given functions $A:\Real\to\Real^{n\times n}$ and
$C:\Real\to\Real^{m\times n}$; pair $(C,\,A)$ is said to be {\em
synchronizable (with respect to $\G_{>0}$)} if there exists a bounded,
time-varying linear feedback law $L:\Real\to\Real^{n\times m}$ such
that for each $\Gamma\in\G_{>0}$, solutions $x_{i}(\cdot)$ of
array~\eqref{eqn:array} with $u_{i}=L(t)z_{i}$ synchronize for all
initial conditions.
\end{definition}

Our objective in this paper is to find sufficient conditions on pair
$(C,\,A)$, in particular on the observability grammian
\eqref{eqn:grammian}, for synchronizability and to design a
synchronizing feedback law $L$ when proposed conditions are met.

The above statement of our objective almost suggests that we first
find sufficient conditions and search for an $L$ only {\em
afterwards}.  However we adopt the opposite approach. We choose first
to construct an $L$ and then work out the conditions on $(C,\,A)$
for synchronization under such feedback law. Given $(C,\,A)$ let
\begin{eqnarray}\label{eqn:L}
L(t):=\Phi_{A}(t,\,0)\Phi_{A}^{T}(t,\,0)C^{T}(t)\,.
\end{eqnarray}
For interconnection $\Gamma\in\Real^{p\times p}$ consider
array~\eqref{eqn:array} with $u_{i}=L(t)z_{i}$. We can write
\begin{eqnarray}\label{eqn:x}
\dot{x}_{i}=A(t)x_{i}+L(t)C(t)\sum_{j\neq i}\gamma_{ij}(x_{j}-x_{i})\,.
\end{eqnarray} 
Let us define the auxiliary variable $\xi_{i}\in\Real^{n}$ as
\begin{eqnarray}\label{eqn:xi}
\xi_{i}(t):=\Phi_{A}(0,\,t)x_{i}(t)
\end{eqnarray}
for $i=1,\,2,\,\ldots,\,p$ and $t\geq 0$.
Combining \eqref{eqn:L}, \eqref{eqn:x}, and \eqref{eqn:xi} we obtain
\begin{eqnarray}\label{eqn:aux}
\dot{\xi}_{i}=\Phi_{A}^{T}(t,\,0)C^{T}(t)C(t)\Phi_{A}(t,\,0)\sum_{j\neq
i}\gamma_{ij}(\xi_{j}-\xi_{i})\,.
\end{eqnarray}
Now note that if $\Phi_{A}$ is bounded, then synchronization of
solutions $\xi_{i}(\cdot)$ implies synchronization of solutions
$x_{i}(\cdot)$ by \eqref{eqn:xi}.  Moreover, if $C$ is bounded as
well, then boundedness of $L$ is guaranteed by \eqref{eqn:L}. Based on
this simple observation let us write the following assumption to be
invoked later.
\begin{assumption}[Boundedness]\label{assume:AC}
For $A:\Real\to\Real^{n\times n}$ and $C:\Real\to\Real^{m\times n}$
following hold.
\begin{itemize}
\item[{(a)}] There exists $\bar{a}\geq 1$ such that 
$|\Phi_{A}(t_{1},\,t_{2})|\leq \bar{a}$ for all $t_{1},\,t_{2}\geq 0$.
\item[{(b)}] There exists $\bar{c}\geq 1$ such that $|C(t)|\leq \bar{c}$ 
for all $t\geq 0$.
\end{itemize}
\end{assumption}

\begin{remark}
Note that in the time-invariant case Assumption~\ref{assume:AC}(b)
comes for free and Assumption~\ref{assume:AC}(a) boils down to that
matrix $A$ is neutrally stable (in the continuous-time sense) with all
its eigenvalues residing on the imaginary axis.
\end{remark}

The second point we want to make is that the term multiplying the sum
in \eqref{eqn:aux} is the integrand of the observability grammian,
which is SPSD at each $t$. We elaborate on this
fact in the next section.

\section{Synchronization under SPSD matrix}\label{sec:spsd}

For a given interconnection $\Gamma=[\gamma_{ij}]\in\Real^{p\times
p}$, let an array of $p$ systems be
\begin{eqnarray}\label{eqn:system}
\dot{x}_{i}=Q_{t}\sum_{j\neq i}\gamma_{ij}(x_{j}-x_{i})
\end{eqnarray}
where $x_{i}\in\Real^{n}$ is the state of the $i$th system (for
$i=1,\,2,\,\ldots,\,p$) and $Q_{t}\in\Real^{n\times n}$ is SPSD for
each $t\geq 0$. We assume $Q:\Real_{\geq 0}\to\Q_{n}$ to be
Riemann-integrable. By letting
\begin{eqnarray*}
\ex:=\left[\!\!\begin{array}{c}
x_{1}\\\vdots\\x_{p}
\end{array}\!\!\right]
\end{eqnarray*}
we can rewrite \eqref{eqn:system} more compactly as
\begin{eqnarray}\label{eqn:systemkron}
\dot{\ex}=(\Gamma\otimes Q_{t})\ex\,.
\end{eqnarray}

\begin{remark}\label{rem:bnd}
Sometimes we need function $Q:\Real_{\geq 0}\to \Q_{n}$ be
bounded on the interval $[0,\,\infty)$, i.e. there exists
$h\geq 1$ such that $|Q_{t}|\leq h$ for all $t$. Note that
\eqref{eqn:systemkron} can be written as
\begin{eqnarray*}
\dot{\ex}=\left(h\Gamma\otimes \frac{Q_{t}}{h}\right)\ex\,.
\end{eqnarray*}
Now, since $\Gamma$ is an interconnection, so is $h\Gamma$. Also,
connectedness is invariant under multiplication by a positive scalar,
i.e. $\Gamma$ is connected if and only if $h\Gamma$ is. Finally,
observe that $Q_{t}/h\in\overline{\Q}_{n}$. Without loss of generality
(for our purposes) therefore we can can
take $h$ to be unity, which lets us consider $Q:\Real_{\geq
0}\to\overline{\Q}_{n}$ whenever we need $Q$ be bounded.
\end{remark}

In the rest of this section we first show that the origin of
system~\eqref{eqn:systemkron} is stable regardless of interconnection
$\Gamma\in\Real^{p\times p}$ and function $Q:\Real\to\Q_{n}$. Then,
under connectedness of $\Gamma$, which is obviously necessary for
synchronization, we work out some sufficient conditions on function
$Q$ to establish synchronization of solutions $x_{i}(\cdot)$ of
array~\eqref{eqn:system}. Finally, we provide two theorems to make the
picture that we want to give in this section closer to complete. One
of those theorems states that time-invariance of interconnection
$\Gamma$ in \eqref{eqn:systemkron} is necessary for stability. With
the other one, we aim to show that the sufficient conditions that we
will have proposed on $Q$ for synchronization cannot be {\em readily}
relaxed into a less technical one without sacrificing generality.

\subsection{Stability}

\begin{lemma}\label{lem:omega}
Let interconnection $\Gamma\in\Real^{p\times p}$ be connected and
$r\in\Real^{p}$ satisfy \eqref{eqn:r}. Then, there
exists symmetric positive definite matrix
$\Omega\in\Real^{p\times p}$ such that
\begin{eqnarray}\label{eqn:lyap}
(\Gamma-\one{r^{T}})^{T}\Omega+\Omega(\Gamma-\one{r^{T}})=-I_{p}\,.
\end{eqnarray}
\end{lemma}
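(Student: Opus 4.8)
The plan is to reduce \eqref{eqn:lyap} to a standard Lyapunov equation by showing that the matrix $\Gamma - \one r^{T}$ is Hurwitz, i.e.\ all its eigenvalues have strictly negative real part. Once this is established, the existence (and uniqueness) of a symmetric positive definite $\Omega$ solving the continuous-time Lyapunov equation with right-hand side $-I_{p}$ is the classical Lyapunov theorem.

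First I would analyze the spectrum of $M := \Gamma - \one r^{T}$. Recall from the excerpt that, since $\Gamma$ is a connected interconnection, $\lambda = 0$ is a simple eigenvalue of $\Gamma$ with right eigenvector $\one$ and left eigenvector $r^{T}$, and every other eigenvalue of $\Gamma$ has strictly negative real part. I claim that $M$ has the same eigenvalues as $\Gamma$ except that the simple eigenvalue $0$ is moved to $-1$. To see this, note that $M\one = \Gamma\one - \one(r^{T}\one) = 0 - \one = -\one$, so $-1$ is an eigenvalue of $M$ with eigenvector $\one$. For the remaining eigenvalues, let $v$ be a (generalized) eigenvector of $\Gamma$ associated with a nonzero eigenvalue $\lambda$; since the left eigenvector $r^{T}$ of $\Gamma$ for eigenvalue $0$ is orthogonal to the invariant subspace complementary to $\mathrm{span}\{\one\}$, one has $r^{T}v = 0$ on that complementary subspace, hence $Mv = \Gamma v - \one(r^{T}v) = \Gamma v = \lambda v$ (and similarly for the Jordan-chain structure). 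Thus the spectrum of $M$ is $\{-1\} \cup (\mathrm{spec}(\Gamma)\setminus\{0\})$, and since all elements of the latter set have negative real part, $M$ is Hurwitz.

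Then I would invoke the standard Lyapunov theorem: for a Hurwitz matrix $M$, the equation $M^{T}\Omega + \Omega M = -I_{p}$ has the unique solution
\begin{eqnarray*}
\Omega = \int_{0}^{\infty} e^{M^{T}t}\,e^{Mt}\,dt\,,
\end{eqnarray*}
which is well-defined because $|e^{Mt}|$ decays exponentially, and which is manifestly symmetric and positive definite (indeed $v^{T}\Omega v = \int_{0}^{\infty}|e^{Mt}v|^{2}\,dt > 0$ for $v \neq 0$). Substituting $M = \Gamma - \one r^{T}$ gives \eqref{eqn:lyap}, completing the proof.

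The main obstacle is the spectral argument in the second step: one must be careful to argue correctly that the nonzero part of $\Gamma$'s spectrum (including any Jordan block structure) is left untouched by the rank-one perturbation $-\one r^{T}$. The clean way is to use the spectral decomposition associated with the simple eigenvalue $0$ of $\Gamma$: write $\Real^{p} = \mathrm{span}\{\one\} \oplus \mathcal{S}$ where $\mathcal{S} = \{v : r^{T}v = 0\}$ is $\Gamma$-invariant (since $r^{T}\Gamma = 0$ implies $r^{T}(\Gamma v) = 0$); on $\mathcal{S}$ the perturbation $-\one r^{T}$ vanishes identically, so $M|_{\mathcal{S}} = \Gamma|_{\mathcal{S}}$, whose eigenvalues are exactly the nonzero eigenvalues of $\Gamma$, and on the complementary line $M\one = -\one$. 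This makes the eigenvalue bookkeeping rigorous without having to track individual Jordan chains.
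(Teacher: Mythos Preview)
Your proof is correct and follows the same overall architecture as the paper's: show that $M=\Gamma-\one r^{T}$ is Hurwitz, then invoke the standard continuous-time Lyapunov theorem. The difference lies in how Hurwitzness is established. You use an invariant-subspace argument: the decomposition $\Real^{p}=\mathrm{span}\{\one\}\oplus\{v:r^{T}v=0\}$ is preserved by both $\Gamma$ and $M$, so $M$ is block-diagonal with blocks $[-1]$ and $\Gamma|_{\mathcal{S}}$, the latter carrying exactly the nonzero (hence strictly stable) eigenvalues of $\Gamma$. The paper instead computes $e^{Mt}$ explicitly: from the identity $(\Gamma-\one r^{T})^{k}=\Gamma^{k}+(-1)^{k}\one r^{T}$ (valid for $k\geq 1$, using $\Gamma\one=0$, $r^{T}\Gamma=0$, $r^{T}\one=1$) one obtains $e^{Mt}=e^{\Gamma t}-(1-e^{-t})\one r^{T}$, and since $e^{\Gamma t}\to\one r^{T}$ this tends to zero. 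Your spectral route is arguably cleaner conceptually and makes the eigenvalue shift $0\mapsto-1$ transparent; the paper's computation has the bonus of an explicit closed form for $e^{Mt}$, though that formula is not used elsewhere.
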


\begin{proof}
Consider matrix $\Gamma-\one{r}^{T}$. Observe that
$(\Gamma-\one{r}^{T})^{k}=\Gamma^{k}+(-1)^{k}\one{r^{T}}$ for $k\in\Natural$. For
$t\in\Real$, therefore we can write
\begin{eqnarray*}
e^{(\Gamma-1r^{T})t}
&=&I_{p}+t(\Gamma-1r^{T})+\frac{t^{2}}{2}(\Gamma-1r^{T})^{2}+\ldots\\
&=&\left(I_{p}+t\Gamma+\frac{t^{2}}{2}\Gamma^{2}+\ldots\right)
-\left(t\one{r^{T}}-\frac{t^{2}}{2}\one{r}^{T}+\ldots\right)\\
&=&e^{\Gamma{t}}-(1-e^{-t})\one{r^{T}}\,.
\end{eqnarray*} 
Consequently, $\lim_{t\to\infty}e^{(\Gamma-\one{r^{T}})t}=0$; and we deduce
that $[\Gamma-\one{r^{T}}]$ is Hurwitz. Therefore Lyapunov
equation \eqref{eqn:lyap} admits a symmetric positive definite
solution $\Omega$.
\end{proof}

\begin{lemma}\label{lem:Vdot}
Let interconnection $\Gamma\in\Real^{p\times p}$ be connected,
$r\in\Real^{p}$ satisfy \eqref{eqn:r}, and symmetric positive definite
matrix $\Omega\in\Real^{p\times p}$ satisfy \eqref{eqn:lyap}. Define
$V:\Real^{np}\to\Real_{\geq 0}$ as $V(\ex):=\ex^{T}(\Omega\otimes
I_{n})\ex$.  Then, for all $Q:\Real_{\geq 0}\to\Q_{n}$ and all $t\geq
0$, solution of system~\eqref{eqn:systemkron} satisfies
\begin{eqnarray*}
\frac{d}{dt}V(\ex(t)-\bar{\ex})
=-(\ex(t)-\bar{\ex})^{T}(I_{p}\otimes Q_{t})(\ex(t)-\bar{\ex})
\end{eqnarray*}
where $\bar{\ex}:=(\one{r^{T}}\otimes I_{n})\ex(0)$.
\end{lemma}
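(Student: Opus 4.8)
The plan is to differentiate $V(\ex(t)-\bar{\ex})$ directly, using the dynamics \eqref{eqn:systemkron} together with the two key structural facts about $\bar{\ex}$: first, that $\bar{\ex}$ is a \emph{constant} vector (so $\tfrac{d}{dt}\bar{\ex}=0$), and second, that $\bar{\ex}$ lies in the ``synchronized subspace'' in the sense that $(\Gamma\otimes Q_{t})\bar{\ex}=0$ for every $t$. The latter holds because $\bar{\ex}=(\one r^{T}\otimes I_{n})\ex(0)$, so by the mixed-product rule for Kronecker products, $(\Gamma\otimes Q_{t})(\one r^{T}\otimes I_{n})=(\Gamma\one r^{T})\otimes Q_{t}=0$ since $\Gamma\one=0$. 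Consequently the ``error'' $e(t):=\ex(t)-\bar{\ex}$ obeys the same linear equation $\dot{e}=(\Gamma\otimes Q_{t})e$ as $\ex$ itself; this is the observation that makes the whole computation clean.

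Next I would compute $\tfrac{d}{dt}V(e(t)) = \dot{e}^{T}(\Omega\otimes I_{n})e + e^{T}(\Omega\otimes I_{n})\dot{e} = e^{T}\bigl[(\Gamma\otimes Q_{t})^{T}(\Omega\otimes I_{n}) + (\Omega\otimes I_{n})(\Gamma\otimes Q_{t})\bigr]e$. Using $(\Gamma\otimes Q_{t})^{T}=\Gamma^{T}\otimes Q_{t}$ (symmetry of $Q_{t}$) and the mixed-product rule, the bracket becomes $(\Gamma^{T}\Omega)\otimes Q_{t} + (\Omega\Gamma)\otimes Q_{t} = (\Gamma^{T}\Omega+\Omega\Gamma)\otimes Q_{t}$. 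This is almost the left-hand side of the Lyapunov equation \eqref{eqn:lyap}, except \eqref{eqn:lyap} involves $\Gamma-\one r^{T}$ rather than $\Gamma$. The reconciling step is to insert $\one r^{T}$ for free: since $e(t)$ satisfies $(\one r^{T}\otimes I_{n})e(t)=(\one r^{T}\otimes I_{n})\ex(t)-(\one r^{T}\otimes I_{n})\bar{\ex}$, and a short computation using $(\one r^{T})^{2}=\one r^{T}$ (because $r^{T}\one=1$) shows $(\one r^{T}\otimes I_{n})\bar{\ex}=\bar{\ex}$, we would need to check that $(\one r^{T}\otimes I_{n})\ex(t)$ is itself constant and equal to $\bar{\ex}$. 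Indeed $\tfrac{d}{dt}(\one r^{T}\otimes I_{n})\ex = (\one r^{T}\otimes I_{n})(\Gamma\otimes Q_{t})\ex = (\one r^{T}\Gamma)\otimes Q_{t}\,\ex = 0$ since $r^{T}\Gamma=0$ by \eqref{eqn:r}. Hence $(\one r^{T}\otimes I_{n})e(t)=0$ for all $t\geq 0$, i.e.\ $e(t)$ always lies in the kernel of $\one r^{T}\otimes I_{n}$.

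With $e(t)\in\ker(\one r^{T}\otimes I_{n})$ established, I can replace $\Gamma$ by $\Gamma-\one r^{T}$ in the quadratic form at no cost: $e^{T}\bigl[(\Gamma^{T}\Omega+\Omega\Gamma)\otimes Q_{t}\bigr]e = e^{T}\bigl[((\Gamma-\one r^{T})^{T}\Omega+\Omega(\Gamma-\one r^{T}))\otimes Q_{t}\bigr]e$, because the difference terms are $-(\one r^{T})^{T}\Omega\otimes Q_{t}$ and $-\Omega\one r^{T}\otimes Q_{t}$, and when sandwiched against $e$ these vanish — the first annihilates $e$ on the left via $((\one r^{T})^{T}\otimes Q_{t})^{T}$ acting through $r\one^{T}\otimes I_{n}$... more simply, $((\Omega\one r^{T})\otimes Q_{t})e = (\Omega\one\otimes I_{n})\bigl((r^{T}\otimes Q_{t})e\bigr)$ and one checks the relevant factor kills $e$; the cleanest route is to note $(r^{T}\otimes I_{n})$ times the $n$-block structure of $e$ gives $\sum_i r_i e_i$, which is exactly what $(\one r^{T}\otimes I_{n})e=0$ forces to be zero. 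Then applying \eqref{eqn:lyap}, $(\Gamma-\one r^{T})^{T}\Omega+\Omega(\Gamma-\one r^{T})=-I_{p}$, yields $\tfrac{d}{dt}V(e(t)) = e^{T}(-I_{p}\otimes Q_{t})e = -(\ex(t)-\bar{\ex})^{T}(I_{p}\otimes Q_{t})(\ex(t)-\bar{\ex})$, which is the claim.

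I expect the main obstacle to be bookkeeping the Kronecker-product identities carefully enough to justify the ``insert $\one r^{T}$ for free'' step — specifically, verifying rigorously that the cross terms $(\Omega\one r^{T})\otimes Q_{t}$ contribute nothing when evaluated on $e(t)$, which hinges on the invariance $(\one r^{T}\otimes I_{n})e(t)\equiv 0$. Everything else is routine once that invariance and the constancy of $\bar{\ex}$ are in hand. No step requires boundedness of $Q$, consistent with the lemma's generality over all $Q:\Real_{\geq 0}\to\Q_{n}$ and all $\Gamma\in\Real^{p\times p}$ — though in fact connectedness of $\Gamma$ is used, since it guarantees existence of $r$ satisfying \eqref{eqn:r} and of $\Omega$ via Lemma~\ref{lem:omega}.
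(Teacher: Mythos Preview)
Your proof is correct and follows essentially the same route as the paper: both hinge on the invariance $(\one r^{T}\otimes I_{n})\ex(t)\equiv\bar{\ex}$ (equivalently $(\one r^{T}\otimes I_{n})e(t)\equiv 0$) and then apply the Lyapunov equation \eqref{eqn:lyap}. The paper's only shortcut is to rewrite the dynamics as $\dot{\ex}(t)=((\Gamma-\one r^{T})\otimes Q_{t})(\ex(t)-\bar{\ex})$ \emph{before} differentiating $V$, so that $(\Gamma-\one r^{T})^{T}\Omega+\Omega(\Gamma-\one r^{T})$ appears immediately and your separate argument that the cross terms $(\Omega\one r^{T})\otimes Q_{t}$ vanish on $e$ is not needed.
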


\begin{proof}
Observe that $(\one r^{T}\otimes I_{n})\dot{\ex}(t)=0$, which implies
$(\one r^{T}\otimes I_{n}){\ex}(t)=\bar{\ex}$ for all $t\geq
0$. Whence $\dot{\ex}(t)=((\Gamma-\one r^{T})\otimes
Q_{t})(\ex(t)-\bar{\ex})$. We can therefore write
\begin{eqnarray*}
\frac{d}{dt}V(\ex(t)-\bar{\ex})
&=& \dot{\ex}(t)^{T}(\Omega\otimes I_{n})(\ex(t)-\bar{\ex})
+(\ex(t)-\bar{\ex})^{T}(\Omega\otimes I_{n})\dot{\ex}(t)\\
&=& (\ex(t)-\bar{\ex})^{T}((\Gamma-\one r^{T})\otimes Q_{t})^{T}(\Omega\otimes I_{n})
(\ex(t)-\bar{\ex})\\
&& +(\ex(t)-\bar{\ex})^{T}(\Omega\otimes I_{n})((\Gamma-\one r^{T})\otimes Q_{t})
(\ex(t)-\bar{\ex})\\
&=& (\ex(t)-\bar{\ex})^{T}\left[((\Gamma-\one r^{T})^{T}\Omega
+\Omega(\Gamma-\one r^{T}))\otimes Q_{t}\right](\ex(t)-\bar{\ex})\\
&=& -(\ex(t)-\bar{\ex})^{T}(I_{p}\otimes Q_{t})(\ex(t)-\bar{\ex})\,.
\end{eqnarray*}
Hence the result.
\end{proof}

\begin{theorem}[Stability]\label{thm:stability}
Given interconnection $\Gamma\in\Real^{p\times p}$, there exists
$\alpha>0$ such that, for all $Q:\Real_{\geq 0}\to\Q_{n}$, solution of
system~\eqref{eqn:systemkron} satisfies
\begin{eqnarray*}
|\ex(t)|\leq\alpha|\ex(0)|
\end{eqnarray*}
for all $t\geq 0$.
\end{theorem}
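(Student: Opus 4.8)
The plan is to exhibit a single time-invariant quadratic Lyapunov function for \eqref{eqn:systemkron} that works for every interconnection $\Gamma$, connected or not, and every $Q$. Only two structural facts are needed: that $Q_{t}=Q_{t}^{T}\geq 0$ for each $t$, and that the matrix exponential $e^{\Gamma t}$ is bounded uniformly in $t\geq 0$. For the latter, observe that $\Gamma$ has nonpositive diagonal, nonnegative off-diagonal and $\Gamma\one=0$; hence $e^{\Gamma t}$ has nonnegative entries (write $e^{\Gamma t}=e^{-ct}e^{(\Gamma+cI_{p})t}$ with $c$ large enough that $\Gamma+cI_{p}$ has nonnegative entries) and $e^{\Gamma t}\one=\one$, i.e. $e^{\Gamma t}$ is row-stochastic, so $|e^{\Gamma t}|\leq\sqrt{p}$ for all $t\geq 0$. (For connected $\Gamma$ this is also immediate from $e^{\Gamma t}\to\one r^{T}$.)

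Since $e^{\Gamma t}$ is bounded, the standard converse Lyapunov statement for a linear system with bounded transition matrix supplies a symmetric positive definite $P\in\Real^{p\times p}$ with $\Gamma^{T}P+P\Gamma\leq 0$; put $S:=-(\Gamma^{T}P+P\Gamma)\geq 0$. I would then set $U(\ex):=\ex^{T}(P\otimes I_{n})\ex$ and run the same Kronecker identities as in the proof of Lemma~\ref{lem:Vdot} (applied to the unshifted state, with $P$ and $S$ in the roles of $\Omega$ and $I_{p}$) to get, along solutions of \eqref{eqn:systemkron},
\[
\frac{d}{dt}U(\ex(t))=-\ex(t)^{T}(S\otimes Q_{t})\ex(t)\,.
\]
Because $S\geq 0$ and $Q_{t}\geq 0$ their Kronecker product is positive semidefinite, so $U(\ex(\cdot))$ is nonincreasing and $U(\ex(t))\leq U(\ex(0))$ for all $t\geq 0$.

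Converting this to the stated bound is immediate: $P$ positive definite gives $\sigma_{\rm min}(P)|\ex(t)|^{2}\leq U(\ex(t))\leq U(\ex(0))\leq\sigma_{\rm max}(P)|\ex(0)|^{2}$, so the theorem holds with $\alpha:=\sqrt{\sigma_{\rm max}(P)/\sigma_{\rm min}(P)}$, which depends on $\Gamma$ alone (through $P$) and not on $Q$ or $t$. The routine matters — $\Gamma\otimes Q_{t}$ is Riemann-integrable, so \eqref{eqn:systemkron} has an absolutely continuous solution and the displayed identity holds almost everywhere, which is all that is needed for monotonicity of $U$ — are a one-line remark.

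The part I expect to require actual care is the extraction of $P$ for a general, possibly non-connected, $\Gamma$. The tempting alternative — decompose the graph of $\Gamma$ into its closed strongly connected components (each itself a connected interconnection, to which Lemma~\ref{lem:Vdot} applies directly) plus a Hurwitz ``transient'' block, and finish by cascade — does not go through cleanly: the transient block is fed through the $Q_{t}$-modulated coupling, and a Young-type estimate on the transient Lyapunov function only yields growth linear in $t$ when $Q_{t}$ is near-singular. The row-stochasticity observation sidesteps this entirely by producing one Lyapunov function for the whole array at once; the rest is bookkeeping.
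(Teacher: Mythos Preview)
Your proof is correct and takes a genuinely different route from the paper's. The paper first reduces to the connected case by asserting (without proof) that $\Gamma$ is similar to ${\rm diag}(\Gamma_{1},\ldots,\Gamma_{q})$ with each $\Gamma_{i}$ a connected interconnection or a scalar zero, and then on each connected block applies Lemma~\ref{lem:Vdot} to the \emph{shifted} function $V(\ex-\bar\ex)$, finishing with the triangle inequality $|\ex(t)|\leq|\ex(t)-\bar\ex|+|\bar\ex|$. You instead build a single global, unshifted Lyapunov function $U(\ex)=\ex^{T}(P\otimes I_{n})\ex$ for the full array directly, via row-stochasticity of $e^{\Gamma t}$ and a converse-Lyapunov step; no decomposition and no shift are needed. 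What each buys: the paper's shifted $V$ is exactly the function reused in the later synchronization results (Theorems~\ref{thm:main}--\ref{thm:suffexp}), so establishing it here has downstream economy, whereas the block-diagonal similarity it invokes is not entirely obvious. Your route is more self-contained for this theorem alone: row-stochasticity is elementary, and the converse-Lyapunov step follows at once from the fact that $\Gamma$ is similar to a block-diagonal matrix with a zero block and a Hurwitz block (by Gershgorin the only imaginary-axis eigenvalue of $\Gamma$ is $0$, and boundedness of $e^{\Gamma t}$ forces it to be semisimple). Your closing remark on why the cascade alternative stalls is apt, though note that the paper actually asserts block-\emph{diagonal} rather than block-triangular similarity, which---if granted---sidesteps the $Q_{t}$-modulated feedthrough issue you describe.
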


\begin{proof}
Interconnection $\Gamma$ is similar to a block diagonal matrix ${\rm
diag}(\Gamma_{1},\,\Gamma_{2},\,\ldots,\,\Gamma_{q})$ in
$\Real^{p\times p}$ such that $\Gamma_{i}\in\Real^{p_{i}\times p_{i}}$
for $i=1,\,2,\,\ldots,\,q$ is a connected interconnection if
$p_{i}\geq 2$ and $\Gamma_{i}=0$ otherwise. (Integer $q$ equals the
number of eigenvalues of $\Gamma$ at the origin.) Since ${\rm
diag}(\Gamma_{1},\,\Gamma_{2},\,\ldots,\,\Gamma_{q})\otimes Q_{t}=
{\rm diag}(\Gamma_{1}\otimes Q_{t},\,\Gamma_{2}\otimes
Q_{t},\,\ldots,\,\Gamma_{q}\otimes Q_{t})$ without loss of generality
it suffices to check two cases: (i) $\Gamma=0$; and (ii) $\Gamma$ is
connected. First case is trivial; so let us suppose $\Gamma$ is
connected.

Now, let $r\in\Real^{p}$ and symmetric positive definite matrix
$\Omega\in\Real^{p\times p}$ satisfy \eqref{eqn:r} and
\eqref{eqn:lyap}, respectively. Given $Q:\Real_{\geq 0}\to\Q_{n}$,
consider system~\eqref{eqn:systemkron}. Let $\bar{\ex}=(\one
r^{T}\otimes I_{n})\ex(0)$. Recalling that $r$ has no negative entry,
we can write
\begin{eqnarray*}
|\bar\ex|
&\leq&|\one r^{T}||\ex(0)|\\
&\leq&|\one||\ex(0)|\\
&=&\sqrt{n}|\ex(0)|\,.
\end{eqnarray*}
Let $V(\ex)=\ex^{T}(\Omega\otimes I_{n})\ex$. Lemma~\ref{lem:Vdot}
yields that $V(\ex(\cdot)-\bar\ex)$ is nonincreasing.  Hence
\begin{eqnarray*}
|\ex(t)|
&\leq& |\ex(t)-\bar\ex|+|\bar\ex|\\
&\leq& \frac{1}{\sqrt{\sigma_{\rm min}
(\Omega)}}\sqrt{V(\ex(t)-\bar\ex)}+|\bar\ex|\\
&\leq& \frac{1}{\sqrt{\sigma_{\rm min}
(\Omega)}}\sqrt{V(\ex(0)-\bar\ex)}+|\bar\ex|\\
&\leq& \sqrt{\frac{\sigma_{\rm max}(\Omega)}{\sigma_{\rm min}
(\Omega)}}|\ex(0)-\bar\ex|+|\bar\ex|\\
&\leq& \left(\sqrt{\frac{\sigma_{\rm max}(\Omega)}{\sigma_{\rm min}
(\Omega)}}(1+\sqrt{n})+\sqrt{n}\right)|\ex(0)|
\end{eqnarray*}
for all $t\geq 0$. 
\end{proof}

Theorem~\ref{thm:stability} establishes stability. That is, for a
fixed interconnection\footnote{Later in the section, we will also
investigate whether stability is preserved when both $\Gamma$ and $Q$
are time-varying.}  $\Gamma$, which need not be connected, solutions
$x_{i}(\cdot)$ of array~\eqref{eqn:system} stay within a bounded
region (that depends only on initial conditions $x_{i}(0)$ and
interconnection $\Gamma$) for all time-varying SPSD matrix
$Q$. Whenever $\Gamma$ is connected, that bounded region can be
described quite precisely. See the next result, which is a direct
consequence of Lemma~\ref{lem:Vdot}.

\begin{theorem}
Given connected interconnection $\Gamma\in\Real^{p\times p}$, 
let $r\in\Real^{p}$ and symmetric positive definite matrix
$\Omega\in\Real^{p\times p}$ satisfy \eqref{eqn:r} and
\eqref{eqn:lyap}, respectively. Then, 
for all $Q:\Real_{\geq 0}\to\Q_{n}$, solutions $x_{i}(\cdot)$ of
array~\eqref{eqn:system} satisfy, for all $t\geq 0$,
\begin{eqnarray*}
|x_{i}(t)-\bar{x}|
\leq\left(\frac{\sigma_{\rm max}(\Omega)}{\sigma_{\rm min}(\Omega)}
\sum_{j=1}^{p}|x_{j}(0)-\bar{x}|^{2}\right)^{1/2}
\end{eqnarray*}
where $\bar{x}:=(r^{T}\otimes I_{n})\ex(0)$.
\end{theorem}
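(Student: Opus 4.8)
The plan is to extract the bound directly from Lemma~\ref{lem:Vdot}; this is essentially the only input needed. Keeping the notation of that lemma, set $\bar\ex:=(\one r^{T}\otimes I_{n})\ex(0)$ and $V(\ex):=\ex^{T}(\Omega\otimes I_{n})\ex$. The first step I would carry out is the identification $\bar\ex=\one\otimes\bar{x}$, i.e.\ $\bar\ex$ is $p$ stacked copies of the vector $\bar{x}=(r^{T}\otimes I_{n})\ex(0)\in\Real^{n}$. Indeed, $(\one r^{T})\otimes I_{n}=(\one\otimes I_{n})(r^{T}\otimes I_{n})$ by the mixed-product property, so $\bar\ex=(\one\otimes I_{n})\bar{x}$. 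Consequently the $i$th $n$-block of $\ex(t)-\bar\ex$ equals $x_{i}(t)-\bar{x}$, whence $|\ex(t)-\bar\ex|^{2}=\sum_{j=1}^{p}|x_{j}(t)-\bar{x}|^{2}$ and in particular $|x_{i}(t)-\bar{x}|\le|\ex(t)-\bar\ex|$ for every $i$.

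Next I would invoke Lemma~\ref{lem:Vdot}. Since $Q_{t}\in\Q_{n}$, the matrix $I_{p}\otimes Q_{t}$ is SPSD, so $\frac{d}{dt}V(\ex(t)-\bar\ex)=-(\ex(t)-\bar\ex)^{T}(I_{p}\otimes Q_{t})(\ex(t)-\bar\ex)\le0$; hence $t\mapsto V(\ex(t)-\bar\ex)$ is nonincreasing and $V(\ex(t)-\bar\ex)\le V(\ex(0)-\bar\ex)$ for all $t\ge0$. I would then sandwich $V$ by Rayleigh-quotient bounds. Because the singular values of a Kronecker product are the pairwise products of those of the factors and $I_{n}$ contributes only ones, the eigenvalues of the symmetric positive definite matrix $\Omega\otimes I_{n}$ are exactly those of $\Omega$; thus $\sigma_{\rm min}(\Omega)|v|^{2}\le v^{T}(\Omega\otimes I_{n})v\le\sigma_{\rm max}(\Omega)|v|^{2}$ for all $v\in\Real^{np}$.

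Chaining the three facts gives, for every $i$ and every $t\ge0$,
\[
\begin{aligned}
\sigma_{\rm min}(\Omega)\,|x_{i}(t)-\bar{x}|^{2}
&\le V(\ex(t)-\bar\ex)\le V(\ex(0)-\bar\ex)\\
&\le \sigma_{\rm max}(\Omega)\sum_{j=1}^{p}|x_{j}(0)-\bar{x}|^{2},
\end{aligned}
\]
and dividing by $\sigma_{\rm min}(\Omega)>0$ (recall $\Omega$ is positive definite) and taking square roots yields the claim. There is no real obstacle here: the argument is bookkeeping with Kronecker products plus the one-line monotonicity of $V$ along trajectories supplied by Lemma~\ref{lem:Vdot}. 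The only points requiring a little care are the identification $\bar\ex=\one\otimes\bar{x}$ --- which is what turns an aggregate estimate on $|\ex(t)-\bar\ex|$ into the per-agent bound claimed --- and making sure the Rayleigh bounds are stated with the eigenvalues of $\Omega$ rather than of $\Omega\otimes I_{n}$ (they coincide, but the distinction is worth a sentence).
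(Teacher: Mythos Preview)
Your proposal is correct and is precisely the argument the paper has in mind: the text states only that the result ``is a direct consequence of Lemma~\ref{lem:Vdot}'', and what you wrote is exactly that consequence spelled out --- monotonicity of $V$ along trajectories, Rayleigh-quotient bounds for $\Omega\otimes I_{n}$, and the identification $\bar\ex=\one\otimes\bar{x}$ to pass from the aggregate to the per-agent estimate.
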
 

\subsection{Asymptotic synchronization}
We now begin looking for sufficient conditions on
$Q:\Real\to\overline{\Q}_{n}$ that guarantee that solutions
$x_{i}(\cdot)$ of array~\eqref{eqn:system} synchronize. The next fact
is to be used by the key theorem following it.

\begin{fact}\label{fact:int}
Let $f:[0,\,T]\to[0,\,1]$ be Riemann-integrable. Then 
\begin{eqnarray*}
3T^{2}\int_{0}^{T}f^{2}(t)dt\geq \left(\int_{0}^{T}f(t)dt\right)^{3}\,.
\end{eqnarray*}
\end{fact}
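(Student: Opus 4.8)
The plan is to reduce everything to two elementary facts about the two numbers $A:=\int_{0}^{T}f(t)\,dt$ and $B:=\int_{0}^{T}f^{2}(t)\,dt$, and then combine them. First, since $0\le f\le 1$ on $[0,T]$, integrating the bound $f\le 1$ gives $A\le\int_{0}^{T}1\,dt=T$; and since $f\ge 0$, also $A\ge 0$. Second, by the Cauchy--Schwarz inequality applied to the Riemann-integrable functions $f$ and the constant $1$,
\[
A^{2}=\left(\int_{0}^{T}f(t)\cdot 1\,dt\right)^{2}\le\left(\int_{0}^{T}f^{2}(t)\,dt\right)\left(\int_{0}^{T}1\,dt\right)=BT .
\]
Multiplying $A^{2}\le BT$ by $A\ge 0$ and then using $A\le T$ yields $A^{3}=A\cdot A^{2}\le A\cdot BT\le T\cdot BT=T^{2}B\le 3T^{2}B$, which is exactly the claim. (Note this route is generous: it already proves $A^{3}\le T^{2}B$, so the constant $3$ is not tight here.)

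If one wishes to avoid quoting Cauchy--Schwarz, the inequality $A^{2}\le TB$ is obtained directly by expanding the nonnegative quantity $\int_{0}^{T}\bigl(f(t)-A/T\bigr)^{2}\,dt=B-A^{2}/T\ge 0$. Alternatively, and this is the route that reproduces precisely the stated constant $3$, write $A^{3}$ as the triple integral $\int_{[0,T]^{3}}f(r)f(s)f(t)\,dr\,ds\,dt$, symmetrize it to $6\int_{0\le r\le s\le t\le T}f(r)f(s)f(t)\,dr\,ds\,dt$ (the cube splits into $6$ congruent simplices and the integrand is symmetric), bound the middle factor using $f(s)\le 1$, and then apply $f(r)f(t)\le\tfrac12\bigl(f(r)^{2}+f(t)^{2}\bigr)$. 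Computing the two resulting simplex integrals, $\int_{0\le r\le s\le t\le T}f(r)^{2}=\int_{0}^{T}f(r)^{2}\tfrac{(T-r)^{2}}{2}\,dr\le\tfrac{T^{2}}{2}B$ and likewise $\int_{0\le r\le s\le t\le T}f(t)^{2}=\int_{0}^{T}f(t)^{2}\tfrac{t^{2}}{2}\,dt\le\tfrac{T^{2}}{2}B$, so the total is $6\cdot\tfrac12\cdot\bigl(\tfrac{T^{2}}{2}+\tfrac{T^{2}}{2}\bigr)B=3T^{2}B$.

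There is no genuine obstacle in this proof; the only point that deserves a moment's care is the legitimacy of the Cauchy--Schwarz / squared-integral step for a merely Riemann-integrable (not necessarily continuous) $f$. This is fine: $f^{2}$ and $(tf-1)^{2}$ are Riemann-integrable, and the inequality is just nonnegativity of the quadratic $t\mapsto\int_{0}^{T}(tf(t)-1)^{2}\,dt$ in $t$, whence its discriminant is nonpositive. The hypothesis $f\le 1$ is used only to get $A\le T$ (first route) or to discard the middle factor (second route), and $f\ge 0$ is used so that cubing and multiplying by $A$ preserve the inequalities.
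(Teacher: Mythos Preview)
Your proof is correct; indeed your first route via Cauchy--Schwarz yields the sharper bound $A^{3}\le T^{2}B$, which is stronger than what the paper states and more than adequate for the use made of this fact in Theorem~\ref{thm:main}. (One cosmetic quibble: in your discriminant argument you use $t$ both as the dummy variable of integration and as the real parameter of the quadratic; rename one of them.)

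The paper's own proof, however, proceeds along an entirely different line: it is a Riemann-sum rearrangement argument. Partitioning $[0,T]$ into subintervals of width $\delta$, the paper extracts, one by one, subintervals containing points where $f$ is large: since $\int f=I_{f}$, some subinterval contains a point with $f\ge I_{f}/T$; since $f\le 1$, removing that subinterval costs at most $\delta$ from the integral, so another subinterval contains a point with $f\ge(I_{f}-\delta)/T$; and so on. This yields a lower bound on the upper Riemann sum for $f^{2}$ of the form $\delta\sum_{i}((I_{f}-(i-1)\delta)/T)^{2}$, which in the limit $\delta\to 0$ becomes $\int_{0}^{I_{f}}(u/T)^{2}\,du=I_{f}^{3}/(3T^{2})$. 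Your approach is shorter, uses only standard integral inequalities, and improves the constant; the paper's approach is more constructive and reveals where the constant $3$ comes from (it is the $3$ in $\int_{0}^{1}u^{2}\,du=1/3$, attained asymptotically when $f$ is an indicator function), but is heavier to make fully rigorous.
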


\begin{proof}
Result trivially follows for $T=0$. Suppose $T>0$.  Fix some
$\delta>0$ such that $T/\delta=:N$ is an integer and let
$I_{f}:=\int_{[0,\,T]}f$. Then there exists
$k_{1}\in\{1,\,2,\,\ldots,\,N\}$ and
$t_{1}\in[(k_{1}-1)\delta,\,k_{1}\delta]$ such that $f(t_{1})\geq
I_{f}/T$.  Since $f(t)\leq 1$ for all $t$, we can also claim that there
exists $k_{2}\in\{1,\,2,\,\ldots,\,N\}\setminus\{k_{1}\}$ and
$t_{2}\in[(k_{2}-1)\delta,\,k_{2}\delta]$ such that $f(t_{2})\geq
(I_{f}-\delta)/T$. Following the pattern, we can generate a sequence
$(k_{i})_{i=1}^{\lfloor I_{f}/\delta\rfloor}$ of distinct elements
from the set $\{1,\,2,\,\ldots,\,N\}$ and an associated sequence
$(t_{i})_{i=1}^{\lfloor I_{f}/\delta\rfloor}$ satisfying
$t_{i}\in[(k_{i}-1)\delta,\,k_{i}\delta]$ and $f(t_{i})\geq
(I_{f}-(i-1)\delta)/T$. Therefore we can write
\begin{eqnarray*}
\sum_{k=1}^{\lfloor T/\delta\rfloor}\sup_{t\in[(k-1)\delta,\,k\delta]}f^{2}(t)\delta
&\geq& \sum_{i=1}^{\lfloor I_{f}/\delta\rfloor}f^{2}(t_{i})\delta\\
&\geq& \delta
\sum_{i=1}^{\lfloor I_{f}/\delta\rfloor}\left(\frac{I_{f}-(i-1)\delta}{T}\right)^{2}\,.
\end{eqnarray*} 
By definition of integral, we can therefore write
\begin{eqnarray*}
\int_{0}^{T}f^{2}(t)dt
&=&\lim_{\delta\to 0^{+}}
\sum_{k=1}^{\lfloor T/\delta\rfloor}\sup_{t\in[(k-1)\delta,\,k\delta]}f^{2}(t)\delta\\
&\geq&\lim_{\delta\to 0^{+}} \delta
\sum_{i=1}^{\lfloor I_{f}/\delta\rfloor}\left(\frac{I_{f}-(i-1)\delta}{T}\right)^{2}\\
&=& \lim_{\delta\to 0^{+}}\frac{\delta}{T^{2}}
\sum_{i=1}^{\lfloor I_{f}/\delta\rfloor}(I_{f}-i\delta)^{2}\\
&=& \lim_{\delta\to 0^{+}}\frac{\delta}{T^{2}}\left(
\sum_{i=1}^{\lfloor I_{f}/\delta\rfloor}(I_{f}^{2}-2I_{f}i\delta)
+\sum_{i=1}^{\lfloor I_{f}/\delta\rfloor}i^{2}\delta^{2}
\right)\\
&=& \lim_{\delta\to 0^{+}}\frac{\delta^{3}}{T^{2}}
\sum_{i=1}^{\lfloor I_{f}/\delta\rfloor}i^{2}\\
&=& \frac{I_{f}^{3}}{3T^{2}}\,.
\end{eqnarray*}
Hence the result.
\end{proof}

\begin{theorem}\label{thm:main}
Given a pair of positive real numbers $(\varepsilon,\,T)$, define
\begin{eqnarray}\label{eqn:delta}
\delta(\varepsilon,\,T):=
\min\left\{\frac{\varepsilon}{2},\,\frac{\varepsilon^{3}}{240T^{5}}
\right\}\,.
\end{eqnarray}
Given connected interconnection $\Gamma\in\Real^{p\times p}$, let
$r\in\Real^{p}$ and symmetric positive definite matrix
$\Omega\in\Real^{p\times p}$ satisfy \eqref{eqn:r} and
\eqref{eqn:lyap}, respectively.  Define
\begin{eqnarray}\label{eqn:rho}
\rho(\Gamma):=\sigma_{\rm max}(\Omega)\max\{1,|\Gamma|^{3}\}\,.
\end{eqnarray}
Let $V(\ex):=\ex^{T}(\Omega\otimes I_{n})\ex$ for $\ex\in\Real^{np}$.
Then, for all $Q:\Real_{\geq 0}\to\overline{\Q}_{n}$, the below
inequality
\begin{eqnarray}\label{eqn:Q}
\sigma_{\rm min}\left(\int_{0}^{T}Q_{t}dt\right)\geq\varepsilon
\end{eqnarray} implies that  
solution $\ex(\cdot)$ of system~\eqref{eqn:systemkron}
satisfies
\begin{eqnarray*}
V(\ex(T)-\bar{\ex})
\leq \left(1-\frac{\delta(\varepsilon,\,T)}{\rho(\Gamma)}\right)V(\ex(0)-\bar{\ex})
\end{eqnarray*}
where $\bar{\ex}:=(\one{r^{T}}\otimes I_{n})\ex(0)$.
\end{theorem}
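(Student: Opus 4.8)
The plan is to track the disagreement vector $e(t):=\ex(t)-\bar{\ex}$ and the Lyapunov value $\phi(t):=V(e(t))$. Lemma~\ref{lem:Vdot} gives $\dot\phi(t)=-e(t)^{T}(I_{p}\otimes Q_{t})e(t)\le 0$, so $\phi$ is nonincreasing and $D:=\phi(0)-\phi(T)=\int_{0}^{T}e(t)^{T}(I_{p}\otimes Q_{t})e(t)\,dt$. Since $\phi(T)=V(\ex(T)-\bar{\ex})$ and $\phi(0)=V(\ex(0)-\bar{\ex})$, the assertion is exactly $D\ge(\delta(\varepsilon,T)/\rho(\Gamma))\,\phi(0)$, so everything reduces to a lower bound on $D$. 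From the proof of Lemma~\ref{lem:Vdot} I will also use $(\one r^{T}\otimes I_{n})e(t)\equiv 0$, which collapses the dynamics to $\dot e(t)=(\Gamma\otimes Q_{t})e(t)$ (the $\one r^{T}\otimes Q_{t}$ term annihilates $e(t)$); this is what makes $|\Gamma|$, rather than $|\Gamma-\one r^{T}|$, appear below.

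First I would separate the two competing effects. The ``frozen'' estimate is immediate: $\int_{0}^{T}e(0)^{T}(I_{p}\otimes Q_{t})e(0)\,dt=e(0)^{T}\big(I_{p}\otimes\!\int_{0}^{T}Q_{t}\,dt\big)e(0)\ge\sigma_{\rm min}\big(\int_{0}^{T}Q_{t}\,dt\big)|e(0)|^{2}\ge\varepsilon|e(0)|^{2}\ge\varepsilon\,\phi(0)/\sigma_{\rm max}(\Omega)$, using the block structure of $I_{p}\otimes(\cdot)$, hypothesis \eqref{eqn:Q}, and $\phi(0)\le\sigma_{\rm max}(\Omega)|e(0)|^{2}$; were $e$ constant this alone would give the ``$\varepsilon/2$'' branch of $\delta$ with $\rho=\sigma_{\rm max}(\Omega)$. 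To control the motion of $e$, the key trick is to bound its displacement by the \emph{decrease} of $\phi$: writing $\dot e(s)=(\Gamma\otimes I_{n})(I_{p}\otimes Q_{s})e(s)$ and using $Q_{s}^{2}\le Q_{s}$ on $\overline{\Q}_{n}$ gives $|\dot e(s)|\le|\Gamma|\,|(I_{p}\otimes Q_{s})e(s)|\le|\Gamma|\big(e(s)^{T}(I_{p}\otimes Q_{s}^{2})e(s)\big)^{1/2}\le|\Gamma|\sqrt{-\dot\phi(s)}$, hence by Cauchy--Schwarz $|e(t)-e(0)|\le|\Gamma|\int_{0}^{t}\sqrt{-\dot\phi(s)}\,ds\le|\Gamma|\sqrt{t}\,\sqrt{\phi(0)-\phi(t)}\le|\Gamma|\sqrt{T}\,\sqrt{D}$, and likewise $\int_{0}^{T}|e(t)-e(0)|\,dt\le\tfrac23|\Gamma|T^{3/2}\sqrt{D}$. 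In particular $\sup_{t\in[0,T]}|e(t)|\le|e(0)|+|\Gamma|\sqrt{T}\sqrt{D}=:K$, a bound that deliberately avoids $\sigma_{\rm min}(\Omega)$.

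The two estimates are combined through the seminorm $v\mapsto|(I_{p}\otimes Q_{t})^{1/2}v|$: by the triangle inequality $\big(e(t)^{T}(I_{p}\otimes Q_{t})e(t)\big)^{1/2}\ge\big(e(0)^{T}(I_{p}\otimes Q_{t})e(0)\big)^{1/2}-|e(t)-e(0)|$. I see two natural ways to finish, one for each branch of $\delta(\varepsilon,T)$. Squaring, integrating, and inserting the frozen estimate together with the $L^{2}$-displacement bound yields a quadratic inequality in $\sqrt{D}$ whose solution is $D\gtrsim\varepsilon|e(0)|^{2}/(1+|\Gamma|T)^{2}\gtrsim\varepsilon\,\phi(0)/\big(\sigma_{\rm max}(\Omega)(1+|\Gamma|T)^{2}\big)$; this has the right order when $\varepsilon^{2}\ge 120\,T^{5}$ and yields the $\varepsilon/2$ branch. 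In the complementary regime I would instead normalize by $K$, so $f(t):=|(I_{p}\otimes Q_{t})^{1/2}e(t)|/K$ takes values in $[0,1]$, note $\int_{0}^{T}f^{2}=D/K^{2}$ and (from the triangle inequality and $\int_{0}^{T}\alpha\ge\varepsilon|e(0)|$, where $\alpha(t):=(e(0)^{T}(I_{p}\otimes Q_{t})e(0))^{1/2}\le|e(0)|$) that $\int_{0}^{T}f\ge\big(\varepsilon|e(0)|-\tfrac23|\Gamma|T^{3/2}\sqrt{D}\big)/K$, and then apply Fact~\ref{fact:int} to get $3T^{2}D/K^{2}\ge\big(\int_{0}^{T}f\big)^{3}$; feeding $K$ and the displacement bound into this cubic relation is exactly where the $T^{2}$ of Fact~\ref{fact:int}, the cube of $\varepsilon$, and the surviving powers of $T$ and $|\Gamma|$ in $\rho(\Gamma)$ come from, giving the $\varepsilon^{3}/(240\,T^{5})$ branch. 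Taking the smaller of the two resulting bounds on $D$ and using $\max\{1,|\Gamma|^{3}\}$ to absorb the $\Gamma$-dependence uniformly should reproduce $D\ge(\delta(\varepsilon,T)/\rho(\Gamma))\phi(0)$.

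The hard part is this last step: the estimate is self-referential, since the displacement bound — the correction to the frozen estimate — is itself written in terms of $\sqrt{D}$, the very quantity being bounded below, so one must solve a quadratic (and, on the Fact~\ref{fact:int} branch, a cubic) inequality in $\sqrt{D}$ and do the constant bookkeeping so that the two regimes splice into the single expression $\delta(\varepsilon,T)=\min\{\varepsilon/2,\;\varepsilon^{3}/(240\,T^{5})\}$; a case split according to which term in that $\min$ is active (equivalently, the sign of $\varepsilon^{2}-120\,T^{5}$) seems unavoidable. A secondary technical point is to keep only $\sigma_{\rm max}(\Omega)$ and $|\Gamma|$ in play and never $\sigma_{\rm min}(\Omega)$: this is precisely why the displacement must be controlled via $\sqrt{-\dot\phi}$ (using $Q^{2}\le Q$ on $\overline{\Q}_{n}$) rather than by a crude Gronwall bound on $|e|$, and why the normalizing constant $K$ is assembled from $|e(0)|$ and $\sqrt{D}$ rather than from a uniform bound on $|e(t)|$.
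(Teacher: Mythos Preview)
Your ingredients are all correct and the strategy can be pushed through, but it is a genuinely different route from the paper's, and the self-referential bookkeeping you flag as ``the hard part'' is a difficulty the paper avoids altogether.

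Rather than bounding the displacement by $|\Gamma|\sqrt{T}\sqrt{D}$ and then solving an implicit inequality in $\sqrt{D}$, the paper fixes a threshold $\omega:=\varepsilon/(4T)$ \emph{a priori} and splits on whether $|\xi(t)-\xi(0)|\le\omega|\xi(0)|$ throughout $[0,T]$. In Case~1 (displacement small for all $t$) one expands $\xi(t)=\xi(0)+b(t)$ directly in the integrand of $D$ and uses $|b(t)|\le\omega|\xi(0)|$, $|{\bf Q}_t|\le 1$ to get $D\ge\varepsilon|\xi(0)|^2-2\omega T|\xi(0)|^2=(\varepsilon/2)|\xi(0)|^2$; this is the first branch of $\delta$, and no $|\Gamma|$ appears. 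In Case~2 (the threshold is reached at some first time $\bar t\le T$) the frozen estimate is not used at all: instead the dynamics give $\omega|\xi(0)|=|\xi(\bar t)-\xi(0)|\le|\Gamma|\int_0^{\bar t}|{\bf Q}_t\xi(t)|\,dt$, one normalizes $g(t):=|{\bf Q}_t\xi(t)|/((1+\omega)|\xi(0)|)\in[0,1]$, and Fact~\ref{fact:int} is applied to $g$ --- i.e.\ to $|{\bf Q}_t\xi|$, not to $|{\bf Q}_t^{1/2}\xi|$ as in your plan. This yields a lower bound on $\int_0^{\bar t}|{\bf Q}_t\xi|^2$, and since ${\bf Q}_t\ge{\bf Q}_t^2$ one has $D\ge\int_0^{\bar t}\xi^T{\bf Q}_t\xi\ge\int_0^{\bar t}|{\bf Q}_t\xi|^2$. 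The cube of $\omega/((1+\omega)|\Gamma|)$ over $3T^2$ then produces exactly $\varepsilon^3/(240T^5|\Gamma|^3)$.

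So the paper's dichotomy is on the size of the displacement, not on which term of the $\min$ is active, and neither branch refers back to $D$; this is what makes the constants $\varepsilon/2$ and $\varepsilon^3/(240T^5)$ fall out in two lines each. Your route --- closing the loop through $\sqrt{D}$ --- is arguably tighter (your quadratic estimate $D\gtrsim\varepsilon|e(0)|^2/(1+|\Gamma|T)^2$ already beats $\delta/\rho$ in many regimes), but matching the \emph{stated} $\delta(\varepsilon,T)$ and $\rho(\Gamma)$ then forces you to untangle a cubic in $\sqrt{D}$ whose coefficients depend on $\sqrt{D}$ through $K$, whereas the paper's chosen $\omega$ decouples the two effects completely.
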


\begin{proof}
Given pair $(\varepsilon,\,T)$ let $\omega:=\varepsilon/(4T)$. 
Consider system~\eqref{eqn:systemkron}. Let us introduce
\begin{eqnarray*}
\xi(t):=\ex(t)-\bar{\ex}\,.
\end{eqnarray*}
By Lemma~\ref{lem:Vdot}, we have
\begin{eqnarray}\label{eqn:Vxi}
\dot{V}(\xi(t))=-\xi^{T}(t)(I_{p}\otimes Q_{t})\xi(t)\,.
\end{eqnarray}
Also, $\xi(\cdot)$ can be shown to satisfy
\begin{eqnarray}\label{eqn:xidot}
\dot{\xi}=(\Gamma\otimes Q_{t})\xi\,.
\end{eqnarray} 
Let $Q:\Real_{\geq 0}\to\overline{Q}_{n}$ satisfy \eqref{eqn:Q}. Then,
regarding the evolution of $\xi(\cdot)$, one of the two following cases
must be.

{\em Case~1:} $|\xi(t)-\xi(0)|\leq \omega|\xi(0)|$ for all
$t\in[0,\,T]$. Let $b(t):=\xi(t)-\xi(0)$ and recall that $|Q_{t}|\leq
1$. For reasons of economy, let us adopt the notation ${\bf
Q}_{t}:=(I_{p}\otimes Q_{t})$. Note that then we have $|{\bf
Q}_{t}|\leq 1$ as well as
\begin{eqnarray*}
\sigma_{\rm min}\left(\int_{0}^{T}{\bf Q}_{t}dt\right)\geq\varepsilon\,.
\end{eqnarray*}
(Observe that $T\geq \varepsilon$.) From \eqref{eqn:Vxi} we can write 
\begin{eqnarray}\label{eqn:case1}
V(\xi(T))
&=&V(\xi(0))-\int_{0}^{T}\xi^{T}(t){\bf Q}_{t}\xi(t)dt\nonumber\\
&=&V(\xi(0))-\int_{0}^{T}(\xi(0)+b(t))^{T}{\bf Q}_{t}(\xi(0)+b(t))dt\nonumber\\
&\leq&V(\xi(0))-\xi^{T}(0)\left[\int_{0}^{T}{\bf Q}_{t}dt\right]\xi(0)
-2\int_{0}^{T}b^{T}(t){\bf Q}_{t}\xi(0)dt-\int_{0}^{T}b^{T}(t){\bf Q}_{t}b(t)dt\nonumber\\
&\leq& V(\xi(0))-\varepsilon|\xi(0)|^2+2\omega T|\xi(0)|^{2}\nonumber\\
&=&V(\xi(0))-\frac{\varepsilon}{2}|\xi(0)|^{2}\nonumber\\
&\leq&\left(1-\frac{\varepsilon}{2\sigma_{\rm max}(\Omega)}\right)V(\xi(0))\,.
\end{eqnarray}

{\em Case~2:} $|\xi(\bar{t})-\xi(0)|=\omega|\xi(0)|$ for some
$\bar{t}\in(0,\,T]$. Without loss of generality, assume
$|\xi(t)-\xi(0)|<\omega|\xi(0)|$ for $t\in[0,\,\bar{t})$. We can by
\eqref{eqn:xidot} write
\begin{eqnarray}\label{eqn:1}
\int_{0}^{\bar{t}}|{\bf Q}_{t}\xi(t)|dt
&=&|\Gamma|^{-1}
\int_{0}^{\bar{t}}|\Gamma\otimes I_{n}||{\bf Q}_{t}\xi(t)|dt\nonumber\\
&\geq&|\Gamma|^{-1}
\int_{0}^{\bar{t}}|(\Gamma\otimes Q_{t})\xi(t)|dt\nonumber\\
&\geq&|\Gamma|^{-1}
\left|\int_{0}^{\bar{t}}(\Gamma\otimes Q_{t})\xi(t)dt\right|\nonumber\\
&=&|\Gamma|^{-1}\omega|\xi(0)|\,.
\end{eqnarray}
Since $|{\bf Q}_{t}\xi(t)|\leq (1+\omega)|\xi(0)|$ for
$t\in[0,\,\bar{t}]$, we can invoke Fact~\ref{fact:int} on
\eqref{eqn:1} and obtain
\begin{eqnarray}\label{eqn:lbound}
\int_{0}^{\bar{t}}|{\bf Q}_{t}\xi(t)|^{2}dt
&=&(1+\omega)^{2}|\xi(0)|^{2}\int_{0}^{\bar{t}}
\left(\frac{|{\bf Q}_{t}\xi(t)|}{(1+\omega)|\xi(0)|}\right)^{2}dt\nonumber\\
&\geq&(1+\omega)^{2}|\xi(0)|^{2}\left(\frac{\omega}{(1+\omega)|\Gamma|}\right)^{3}
\frac{1}{3\bar{t}^{2}}\nonumber\\
&\geq&(1+\omega)^{2}|\xi(0)|^{2}\left(\frac{\omega}{(1+\omega)|\Gamma|}\right)^{3}
\frac{1}{3T^{2}}\nonumber\\
&=& \frac{\omega^{3}}{3(1+\omega)T^{2}|\Gamma|^{3}}|\xi(0)|^{2}\,.
\end{eqnarray}
Since $|{\bf Q}_{t}|\leq 1$, we have ${\bf Q}^{1/2}_{t}\geq{\bf
Q}_{t}$. Also, note that $V(\xi(t))$ is nonincreasing thanks to
\eqref{eqn:Vxi}. Now, by \eqref{eqn:lbound} we can write
\begin{eqnarray}\label{eqn:case2}
V(\xi(T))
&=&V(\xi(0))-\int_{0}^{T}\xi^{T}(t){\bf Q}_{t}\xi(t)dt\nonumber\\
&\leq&V(\xi(0))-\int_{0}^{\bar{t}}\xi^{T}(t){\bf Q}_{t}\xi(t)dt\nonumber\\
&=&V(\xi(0))-\int_{0}^{\bar{t}}|{\bf Q}^{1/2}_{t}\xi(t)|^{2}dt\nonumber\\
&\leq&V(\xi(0))-\int_{0}^{\bar{t}}|{\bf Q}_{t}\xi(t)|^{2}dt\nonumber\\
&\leq&V(\xi(0))-\frac{\omega^{3}}{3(1+\omega)T^{2}|\Gamma|^{3}}|\xi(0)|^{2}
\nonumber\\
&\leq&\left(1-
\frac{\varepsilon^{3}}{48T^{4}(4T+\varepsilon)|\Gamma|^{3}\sigma_{\rm max}(\Omega)}
\right)V(\xi(0))\nonumber\\
&\leq&\left(1-
\frac{\varepsilon^{3}}{240T^{5}|\Gamma|^{3}\sigma_{\rm max}(\Omega)}
\right)V(\xi(0))\,.
\end{eqnarray}
The result follows by \eqref{eqn:delta}, \eqref{eqn:rho},
\eqref{eqn:case1}, and \eqref{eqn:case2}.
\end{proof}

\begin{fact}\label{fact:sum}
Let $(a_{i})_{i=1}^{\infty}$ be a sequence with $0\leq a_{i}<1$ for
all $i$. Then, product $\prod_{i=1}^{\infty}(1-a_{i})$ converges to
zero if and only if sum $\sum_{i=1}^{\infty}a_{i}$ diverges.
\end{fact}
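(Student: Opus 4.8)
The plan is to handle the two implications separately, using the elementary bound $1-a\le e^{-a}$ (valid for $a\ge 0$) for the ``$\sum a_i$ diverges $\Rightarrow$ product is zero'' direction, and the Weierstrass product inequality $\prod(1-a_i)\ge 1-\sum a_i$ for the converse. First I would record the basic structural fact: since $0\le a_i<1$, each factor $1-a_i$ lies in $(0,1]$, so the partial products $P_n:=\prod_{i=1}^{n}(1-a_i)$ form a nonincreasing sequence of strictly positive numbers. Hence $\lim_{n\to\infty}P_n$ exists and is nonnegative, and the whole question reduces to deciding whether that limit is $0$ or strictly positive.

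For the ``only if'' direction ($\sum a_i$ diverges $\Rightarrow$ product $\to 0$), I would simply estimate $P_n\le\prod_{i=1}^{n}e^{-a_i}=\exp\bigl(-\sum_{i=1}^{n}a_i\bigr)$; since the exponent tends to $-\infty$, the right-hand side tends to $0$, so $P_n\to 0$.

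For the ``if'' direction I would argue the contrapositive: assume $S:=\sum_{i=1}^{\infty}a_i<\infty$ and show the product does not converge to zero. Because the tail of a convergent series vanishes, choose $N$ with $\sum_{i=N}^{\infty}a_i<1/2$. I would then establish by induction on $m$ the Weierstrass-type inequality $\prod_{i=N}^{N+m}(1-a_i)\ge 1-\sum_{i=N}^{N+m}a_i$: the base case is trivial, and in the inductive step one multiplies by $1-a_{N+m+1}\ge 0$, splitting according to the sign of $1-\sum_{i=N}^{N+m}a_i$ (when it is negative the claimed bound holds automatically since the left side is nonnegative, and when it is nonnegative the product expands and the cross term $a_{N+m+1}\sum_{i=N}^{N+m}a_i\ge 0$ is discarded). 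This gives $\prod_{i=N}^{n}(1-a_i)\ge 1-\sum_{i=N}^{n}a_i>1/2$ for every $n\ge N$, hence $P_n=P_{N-1}\prod_{i=N}^{n}(1-a_i)\ge P_{N-1}/2$, and since $P_{N-1}>0$ (each factor is strictly positive), letting $n\to\infty$ yields $\lim_{n\to\infty}P_n\ge P_{N-1}/2>0$.

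The only mildly delicate point is the induction proving the Weierstrass inequality, specifically accounting for the case where a partial sum $\sum_{i=N}^{N+m}a_i$ exceeds $1$; everything else is a one-line estimate. No boundedness or connectedness hypotheses from earlier in the paper enter here — this is a self-contained real-analysis lemma, to be combined later with the per-window contraction factor $1-\delta(\varepsilon,T)/\rho(\Gamma)$ produced by Theorem~\ref{thm:main}.
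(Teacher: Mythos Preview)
Your argument is correct. The paper itself does not give a proof of this fact at all: its entire ``proof'' is a citation to a classical textbook (Saks--Zygmund), so there is no approach to compare against beyond noting that you have supplied the standard self-contained argument rather than a reference. One cosmetic slip: you have the labels ``if'' and ``only if'' reversed relative to the stated biconditional (the implication $\sum a_i=\infty\Rightarrow\prod(1-a_i)\to 0$ is the \emph{if} direction), though the mathematics is unaffected since you explicitly spell out which implication you mean each time.
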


\begin{proof}
See e.g. \cite[Thm. 1.17 of Ch. VII]{SakZy-65}.
\end{proof}

In the light of Theorem~\ref{thm:main} and Fact~\ref{fact:sum}, we now
state our most general condition on
$Q:\Real\to\overline{\Q}_{n}$ for synchronization of solutions
$x_{i}(\cdot)$ of array~\eqref{eqn:system}.

\begin{definition}\label{def:suff}
Function $Q:\Real_{\geq 0}\to\Q_{n}$ is said to be 
{\em sufficiently exciting} if there exists a
sequence of pairs of positive real numbers
$(\varepsilon_{i},\,T_{i})_{i=1}^{\infty}$ satisfying
\begin{eqnarray}\label{eqn:ei}
\sigma_{\rm min}\left(\int_{t_{i}}^{t_{i}+T_{i}}
Q_{t}dt\right)\geq\varepsilon_{i}
\end{eqnarray}
for $t_{i}=\sum_{j=1}^{i-1}T_{j}$ with $t_{1}=0$, and
\begin{eqnarray}\label{eqn:di}
\lim_{N\to\infty}\sum_{i=1}^{N}\delta(\varepsilon_{i},\,T_{i})=\infty
\end{eqnarray}
where $\delta(\cdot,\,\cdot)$ is as defined in \eqref{eqn:delta}.
\end{definition}

\begin{theorem}\label{thm:suff}
Let $Q:\Real_{\geq 0}\to\overline{\Q}_{n}$ be sufficiently exciting.
Then, for all connected interconnection $\Gamma\in\Real^{p\times p}$,
solutions $x_{i}(\cdot)$ of array~\eqref{eqn:system} synchronize to
\begin{eqnarray*}
\bar{x}(t)\equiv(r^{T}\otimes I_{n})\ex(0)
\end{eqnarray*}
where $r\in\Real^{p}$ satisfies \eqref{eqn:r}.
\end{theorem}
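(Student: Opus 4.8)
The plan is to apply Theorem~\ref{thm:main} repeatedly on the consecutive intervals $[t_i,\,t_{i+1}]$, with $t_i=\sum_{j=1}^{i-1}T_j$, and then pass to the limit using Fact~\ref{fact:sum}. First I would fix a connected interconnection $\Gamma\in\Real^{p\times p}$, pick $r\in\Real^{p}$ and symmetric positive definite $\Omega$ satisfying \eqref{eqn:r} and \eqref{eqn:lyap}, and set $V(\ex):=\ex^{T}(\Omega\otimes I_{n})\ex$, $\bar{x}:=(r^{T}\otimes I_{n})\ex(0)$, and $\bar{\ex}:=(\one r^{T}\otimes I_{n})\ex(0)=\one\otimes\bar{x}$. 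The key preliminary observation, already contained in the proof of Lemma~\ref{lem:Vdot}, is that $(\one r^{T}\otimes I_{n})\ex(t)\equiv\bar{\ex}$ for all $t\geq0$; hence $\bar{\ex}=(\one r^{T}\otimes I_{n})\ex(t_{i})$ for every $i$, so the constant target $\bar{\ex}$ is consistent across all of the intervals.

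Next, for each $i$ I would time-shift the system: with $\tilde{Q}_{s}:=Q_{t_{i}+s}$ on $[0,\,T_{i}]$ (extended arbitrarily within $\overline{\Q}_{n}$ elsewhere) and $\tilde{\ex}(s):=\ex(t_{i}+s)$, the trajectory $\tilde{\ex}$ solves \eqref{eqn:systemkron} with data $\tilde{Q}$, condition \eqref{eqn:ei} gives $\sigma_{\rm min}\big(\int_{0}^{T_{i}}\tilde{Q}_{s}\,ds\big)\geq\varepsilon_{i}$, and $(\one r^{T}\otimes I_{n})\tilde{\ex}(0)=\bar{\ex}$. Applying Theorem~\ref{thm:main} with $(\varepsilon,\,T)=(\varepsilon_{i},\,T_{i})$ then yields
\[
V(\ex(t_{i+1})-\bar{\ex})\leq\Bigl(1-\tfrac{\delta(\varepsilon_{i},\,T_{i})}{\rho(\Gamma)}\Bigr)V(\ex(t_{i})-\bar{\ex})\,,
\]
with $\rho(\Gamma)$ as in \eqref{eqn:rho}. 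Writing $a_{i}:=\delta(\varepsilon_{i},\,T_{i})/\rho(\Gamma)$, I first note $0\leq a_{i}\leq1$: applying the displayed inequality to an initial condition with $\bar{\ex}=0$ but $\ex(0)\neq0$ (possible since $\one r^{T}$ has rank one and $p\geq2$) gives $0\leq V(\ex(T_{i}))\leq(1-a_{i})V(\ex(0))$ with $V(\ex(0))>0$, forcing $a_{i}\leq1$. If $a_{i}=1$ for some $i$, then $V(\ex(t_{i+1})-\bar{\ex})=0$, and since $V(\ex(\cdot)-\bar{\ex})$ is nonincreasing by Lemma~\ref{lem:Vdot} we get $\ex(t)\equiv\bar{\ex}$ for $t\geq t_{i+1}$, so synchronization to $\bar{x}$ is immediate. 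Otherwise $a_{i}\in[0,\,1)$ for all $i$, and iterating yields $V(\ex(t_{N+1})-\bar{\ex})\leq\prod_{i=1}^{N}(1-a_{i})\cdot V(\ex(0)-\bar{\ex})$; since $\sum_{i}a_{i}=\rho(\Gamma)^{-1}\sum_{i}\delta(\varepsilon_{i},\,T_{i})=\infty$ by \eqref{eqn:di}, Fact~\ref{fact:sum} gives $\prod_{i=1}^{\infty}(1-a_{i})=0$, hence $V(\ex(t_{N+1})-\bar{\ex})\to0$ as $N\to\infty$.

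To finish, I would observe that $\delta(\varepsilon_{i},\,T_{i})\leq\varepsilon_{i}/2\leq T_{i}/2$, where $T_{i}\geq\varepsilon_{i}$ follows (exactly as in the proof of Theorem~\ref{thm:main}) from $|Q_{t}|\leq1$; thus $\sum_{i}T_{i}\geq2\sum_{i}\delta(\varepsilon_{i},\,T_{i})=\infty$, so $t_{i}\to\infty$ and the intervals $[t_{i},\,t_{i+1}]$ exhaust $[0,\,\infty)$. Combining the convergence of the subsequence $V(\ex(t_{N+1})-\bar{\ex})\to0$ with the monotonicity of $V(\ex(\cdot)-\bar{\ex})$ forces $V(\ex(t)-\bar{\ex})\to0$ as $t\to\infty$, and since $V(\ex(t)-\bar{\ex})\geq\sigma_{\rm min}(\Omega)|\ex(t)-\bar{\ex}|^{2}$ and $\bar{\ex}=\one\otimes\bar{x}$, this gives $|x_{i}(t)-\bar{x}|\to0$ for every $i$, i.e. the solutions synchronize to the constant $\bar{x}(t)\equiv(r^{T}\otimes I_{n})\ex(0)$. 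I do not expect a serious obstacle: the substance is already carried by Theorem~\ref{thm:main} and Fact~\ref{fact:int}, and the only points demanding care are the time-shift that lets Theorem~\ref{thm:main} be reused on $[t_{i},\,t_{i+1}]$ with the same target $\bar{\ex}$, the verification that $\sum_{i}T_{i}=\infty$ so the pieces cover the whole half-line, and the degenerate case $a_{i}=1$.
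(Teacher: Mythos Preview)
Your proposal is correct and follows essentially the same route as the paper: apply Theorem~\ref{thm:main} on each interval $[t_{i},\,t_{i+1}]$ to obtain the recursion $V(\ex(t_{i+1})-\bar\ex)\leq(1-\delta_{i}/\rho(\Gamma))V(\ex(t_{i})-\bar\ex)$, then use Fact~\ref{fact:sum} together with \eqref{eqn:di} to conclude $V(\ex(t_{i})-\bar\ex)\to0$. You are in fact more explicit than the paper on several points it leaves tacit---the time-shift justifying the reuse of Theorem~\ref{thm:main} with the same target $\bar\ex$, the verification that $a_{i}\in[0,\,1)$ (or the handling of $a_{i}=1$), the fact that $\sum_{i}T_{i}=\infty$ so $t_{i}\to\infty$, and the passage from subsequence convergence to convergence for all $t$ via the monotonicity in Lemma~\ref{lem:Vdot}.
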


\begin{proof}
Let us be given function $Q:\Real_{\geq 0}\to\overline{Q}_{n}$ and
sequence $(\varepsilon_{i},\,T_{i})$ satisfying \eqref{eqn:ei} and
\eqref{eqn:di}. Let us let $\delta_{i}:=\delta(\varepsilon_{i},\,T_{i})$. 
Let interconnection $\Gamma\in\Real^{p\times p}$ be connected,
$r\in\Real^{p}$ satisfy \eqref{eqn:r}, and symmetric positive
definite matrix $\Omega\in\Real^{p\times p}$ satisfy
\eqref{eqn:lyap}. Consider system~\eqref{eqn:systemkron} and let
$\bar{\ex}=(\one r^{T}\otimes I_{n})\ex(0)$. Now, letting
$V(\ex)=\ex^{T}(\Omega\otimes I_{n})\ex$, by Theorem~\ref{thm:main} we
can write
\begin{eqnarray*}
V(\ex(t_{i+1})-\bar{\ex})\leq\left(1-\frac{\delta_{i}}{\rho(\Gamma)}\right)V(\ex(t_{i})-\bar{\ex})
\end{eqnarray*} 
for $i=1,\,2,\,\ldots$ where $\rho(\cdot)$ is as defined in
\eqref{eqn:rho}. Whence
\begin{eqnarray}\label{eqn:Vti}
V(\ex(t_{i})-\bar{\ex})\leq V(\ex(0)-\bar{\ex})
\prod_{j=1}^{i-1}\left(1-\frac{\delta_{j}}{\rho(\Gamma)}\right)\,.
\end{eqnarray}
Let now $a_{i}:=\delta_{i}/\rho(\Gamma)$. We can write by \eqref{eqn:di}
\begin{eqnarray*}
\lim_{N\to\infty}\sum_{i=1}^{N}a_{i}
&=&\rho(\Gamma)^{-1}\lim_{N\to\infty}\sum_{i=1}^{N}\delta_{i}\\
&=&\infty\,.
\end{eqnarray*}
Now, we can invoke Fact~\ref{fact:sum} and claim that
$\lim_{N\to\infty}\prod_{i=1}^{N}(1-a_{i})=0$, which yields by
\eqref{eqn:Vti}
\begin{eqnarray*}
\lim_{i\to\infty}V(\ex(t_{i})-\bar{\ex})=0\,.
\end{eqnarray*}
Hence the result.
\end{proof}

\begin{corollary}\label{cor:general}
Let $Q:\Real_{\geq 0}\to\overline{\Q}_{n}$ be sufficiently exciting. 
Then solution to linear system $\dot{x}=-Q_{t}x$ satisfies 
$\lim_{t\to\infty}x(t)=0$. 
\end{corollary}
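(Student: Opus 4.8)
The plan is to realize the scalar-flavoured system $\dot x=-Q_{t}x$ as the "disagreement dynamics" of the two-agent version of array~\eqref{eqn:system}, and then to quote Theorem~\ref{thm:suff}. Fix an arbitrary $x_{0}\in\Real^{n}$ and let $x(\cdot)$ be the solution of $\dot x=-Q_{t}x$ with $x(0)=x_{0}$ (existence and uniqueness of an absolutely continuous solution being guaranteed by boundedness and Riemann-integrability of $Q$). Take $p=2$ and
\begin{eqnarray*}
\Gamma:=\left[\begin{array}{cc} -1/2 & 1/2\\ 1/2 & -1/2\end{array}\right]\,,
\end{eqnarray*}
which is a connected interconnection, with $r=(1/2,\,1/2)^{T}$ satisfying \eqref{eqn:r}.

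First I would consider array~\eqref{eqn:system} for this $\Gamma$ with initial conditions $x_{1}(0)=x_{0}$ and $x_{2}(0)=0$. Writing out the two equations, $\dot x_{1}=\tfrac12 Q_{t}(x_{2}-x_{1})$ and $\dot x_{2}=\tfrac12 Q_{t}(x_{1}-x_{2})$, so the difference $w(\cdot):=x_{1}(\cdot)-x_{2}(\cdot)$ obeys $\dot w=-Q_{t}w$ with $w(0)=x_{0}$. By uniqueness of solutions, $w(t)=x(t)$ for all $t\ge0$.

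Next I would invoke Theorem~\ref{thm:suff}: since $Q:\Real_{\ge0}\to\overline{\Q}_{n}$ is sufficiently exciting and $\Gamma$ is connected, the solutions $x_{1}(\cdot),\,x_{2}(\cdot)$ synchronize to $\bar x=(r^{T}\otimes I_{n})\ex(0)=\tfrac12 x_{0}$. In particular $|x_{i}(t)-\bar x|\to0$ for $i=1,2$, so by the triangle inequality $|x(t)|=|w(t)|=|x_{1}(t)-x_{2}(t)|\le|x_{1}(t)-\bar x|+|x_{2}(t)-\bar x|\to0$, which is the claim.

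There is no real obstacle here; the only point needing a line of justification is that the two-agent array's disagreement variable is governed exactly by $\dot w=-Q_{t}w$, which is an immediate computation, together with the standard fact that linear time-varying ODEs with bounded Riemann-integrable coefficients have unique solutions — a regularity assumption the paper already adopts for $Q$.
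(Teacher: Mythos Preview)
Your argument is correct and is essentially the approach the paper intends: the corollary is stated without proof, immediately after Theorem~\ref{thm:suff}, as a direct consequence of that theorem obtained by embedding $\dot x=-Q_{t}x$ into a two-agent instance of array~\eqref{eqn:system}. The only cosmetic difference is the choice of $\Gamma$: the paper (cf.\ the proof of Theorem~\ref{thm:neg2}) favours the asymmetric $\Gamma=\left[\begin{smallmatrix}-1&1\\0&0\end{smallmatrix}\right]$ with $x_{2}(0)=0$, so that $x_{2}\equiv0$ and $x_{1}$ itself solves $\dot x_{1}=-Q_{t}x_{1}$, whereas you use the symmetric $\Gamma$ and read off the equation from the disagreement variable; both routes land on Theorem~\ref{thm:suff} in the same way.
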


The following definition is quite standard; see, for
instance, \cite{loria05}.  Note that the condition it depicts is less
general than that of Definition~\ref{def:suff}, yet it guarantees
exponential synchronization.

\begin{definition}
Map $Q:\Real_{\geq 0}\to\Q_{n}$ is said to be {\em persistently exciting}
if there exists a pair of positive real
numbers $(\varepsilon,\,T)$ such that
\begin{eqnarray}\label{eqn:uniform}
\sigma_{\rm min}\left(\int_{t}^{t+T}Q_{\tau}d\tau\right)\geq\varepsilon
\end{eqnarray}
for all $t\geq 0$.
\end{definition}

\begin{remark}
The following theorem can be viewed as a generalization of a classic
result in adaptive control theory \cite[Thm.~2.5.1]{sastry89}.  Note that
Corollary~\ref{cor:general} makes another generalization to this result
since ``sufficiently exciting'' is weaker than ``persistently exciting''.
\end{remark}

\begin{theorem}\label{thm:suffexp}
Let interconnection $\Gamma\in\Real^{p\times p}$ be connected and 
function $Q:\Real_{\geq 0}\to \overline{\Q}_{n}$ be persistently exciting. Then,
solutions $x_{i}(\cdot)$ of array~\eqref{eqn:system} exponentially
synchronize to
\begin{eqnarray*}
\bar{x}(t)\equiv(r^{T}\otimes I_{n})\ex(0)
\end{eqnarray*}
where $r\in\Real^{p}$ satisfies \eqref{eqn:r}.
\end{theorem}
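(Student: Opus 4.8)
The plan is to reduce everything to Theorem~\ref{thm:main} by noting that persistent excitation is just sufficient excitation with \emph{constant} windows. Given a pair $(\varepsilon,T)$ witnessing \eqref{eqn:uniform}, set $\varepsilon_{i}:=\varepsilon$ and $T_{i}:=T$ for all $i$, so that $t_{i}=(i-1)T$ and \eqref{eqn:uniform} applied at $t=t_{i}$ gives \eqref{eqn:ei}; since $\delta(\varepsilon_{i},T_{i})=\delta(\varepsilon,T)>0$ is fixed, \eqref{eqn:di} holds as well. Thus $Q$ is sufficiently exciting and Theorem~\ref{thm:suff} already identifies the synchronization target as the constant $\bar{x}=(r^{T}\otimes I_{n})\ex(0)$; the only thing left to do is to equip the convergence with an exponential rate, which is exactly what the quantitative bound in Theorem~\ref{thm:main} supplies.

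First I would apply Theorem~\ref{thm:main} on each window $[(k-1)T,\,kT]$ (the hypotheses are shift-invariant: $Q_{t}$ is merely relabelled, and $\bar{\ex}=(\one r^{T}\otimes I_{n})\ex(0)$ is the same on every window because $(\one r^{T}\otimes I_{n})\ex(t)$ is constant by Lemma~\ref{lem:Vdot}). This yields, with $\mu:=1-\delta(\varepsilon,T)/\rho(\Gamma)$, the one-step estimate $V(\ex(kT)-\bar\ex)\le\mu\,V(\ex((k-1)T)-\bar\ex)$; here $\mu\in[0,1)$, since $\delta(\varepsilon,T)>0$, $\rho(\Gamma)<\infty$, and $\mu\ge 0$ is forced by $V\ge 0$. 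Iterating gives $V(\ex(kT)-\bar\ex)\le\mu^{k}V(\ex(0)-\bar\ex)$ for all $k\in\Natural$. If $\mu=0$ the right side vanishes for $k\ge 1$ and exponential synchronization is immediate, so I would henceforth assume $\mu\in(0,1)$.

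Next I would interpolate to arbitrary $t\ge 0$: by Lemma~\ref{lem:Vdot} the map $t\mapsto V(\ex(t)-\bar\ex)$ is nonincreasing (its derivative equals $-(\ex-\bar\ex)^{T}(I_{p}\otimes Q_{t})(\ex-\bar\ex)\le 0$), so with $k=\lfloor t/T\rfloor$,
\begin{eqnarray*}
V(\ex(t)-\bar\ex)\le V(\ex(kT)-\bar\ex)\le\mu^{\lfloor t/T\rfloor}V(\ex(0)-\bar\ex)\le\mu^{-1}e^{-(t/T)\ln(1/\mu)}V(\ex(0)-\bar\ex)\,.
\end{eqnarray*}
Finally I would pass from $V$ back to the individual errors: since $V(\ex)\ge\sigma_{\rm min}(\Omega)|\ex|^{2}$ and $\bar\ex$ stacks $p$ copies of $\bar{x}$, one has $|x_{i}(t)-\bar{x}|\le|\ex(t)-\bar\ex|\le(V(\ex(t)-\bar\ex)/\sigma_{\rm min}(\Omega))^{1/2}$ for every $i$. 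Combining with the display above gives $\max_{i}|x_{i}(t)-\bar{x}|\le c\,e^{-\alpha t}$ with $\alpha:=\ln(1/\mu)/(2T)>0$ and $c:=(\mu^{-1}V(\ex(0)-\bar\ex)/\sigma_{\rm min}(\Omega))^{1/2}$, i.e.\ the claimed exponential synchronization to $\bar{x}$.

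There is no real obstacle here, since all the analytic work was already done in Theorem~\ref{thm:main} (and Fact~\ref{fact:int} behind it). The only points requiring a little care are: the bookkeeping that turns persistent excitation into sufficient excitation with constant $(\varepsilon_i,T_i)$; the use of monotonicity of $V(\ex(\cdot)-\bar\ex)$ to fill in the continuum of times between the sampling instants $kT$; and the harmless degenerate case $\mu=0$ (finite-time convergence), which of course also implies exponential synchronization.
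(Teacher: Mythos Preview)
Your proposal is correct and follows essentially the same approach as the paper: both apply Theorem~\ref{thm:main} on successive windows of length $T$ to obtain the geometric decay $V(\ex(kT)-\bar\ex)\le\mu^{k}V(\ex(0)-\bar\ex)$, after which the paper simply says ``The result then follows.'' Your version spells out what that sentence hides---the interpolation between sampling instants via monotonicity of $V$, the passage from $V$ to the individual errors $|x_{i}(t)-\bar{x}|$, and the degenerate case $\mu=0$---so it is a faithful and more detailed rendering of the paper's own argument (the preliminary detour through Theorem~\ref{thm:suff} to identify $\bar{x}$ is harmless but not strictly needed, since $\bar\ex$ is already given explicitly in Theorem~\ref{thm:main}).
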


\begin{proof}
Since $Q:\Real_{\geq 0}\to \overline{\Q}_{n}$ is persistently
exciting, by definition, there exists a pair of positive real numbers
$(\varepsilon,\,T)$ satisfying \eqref{eqn:uniform} for all $t\geq
0$. Let $V(\ex)=\ex^{T}(\Omega\otimes I_{n})\ex$ for
$\ex\in\Real^{np}$, where symmetric positive definite matrix
$\Omega\in\Real^{p\times p}$ satisfy \eqref{eqn:lyap}. Then, by
Theorem~\ref{thm:main} we can write
\begin{eqnarray*}
V(\ex(kT)-\bar{\ex})
\leq\left(1-\frac{\delta(\varepsilon,\,T)}{\rho(\Gamma)}\right)^{k}V(\ex(0)-\bar{\ex})
\end{eqnarray*}
for all $k\in\Natural$, where $\bar{\ex}:=(\one r^{T}\otimes
I_{n})\ex(0)$. The result then follows.
\end{proof}

We now present an interesting application of Theorem~\ref{thm:suffexp}
on coupled harmonic oscillators (in $\Real^{2}$) described by
\begin{eqnarray*}
\dot{x}_{i1}&=&x_{i2}\\
\dot{x}_{i2}&=& -x_{i1}+\sum_{j\neq i}\gamma_{ij}(x_{j2}-x_{i2})
\end{eqnarray*}
for $i=1,\,2,\,\ldots,\,p$. Let
\begin{eqnarray*}
x_{i}=\left[\!\!\begin{array}{r}x_{i1}\\x_{i2}\end{array}\!\!\right]\ ,\quad
A = \left[\!\!\begin{array}{rr}0 & 1\\-1&0\end{array}\!\!\right]\ ,\quad
C = [0\ 1]\,.
\end{eqnarray*}
Then we can write
\begin{eqnarray*}
\dot{x}_{i}=Ax_{i}+C^{T}C\sum_{j\neq i}\gamma_{ij}(x_{j}-x_{i})\,.
\end{eqnarray*} 
Define $\xi_{i}(t):=e^{-At}x_{i}(t)$ and $\xi=[\xi_{1}^{T}\ \ldots\
\xi_{p}^{T}]^{T}$. Then
\begin{eqnarray*}
\dot{\xi}=(\Gamma\otimes e^{A^{T}t}C^{T}Ce^{At})\xi
\end{eqnarray*}
for $A$ is skew-symmetric. A trivial computation shows
\begin{eqnarray*}
e^{A^{T}t}C^{T}Ce^{At}=
\left[\!\!\begin{array}{rr}\cos^{2}t & -\sin t\cos t\\
-\sin t\cos t&\sin^{2}t\end{array}\!\!\right]\in{\overline\Q}_{2}
\end{eqnarray*}
whence
\begin{eqnarray*}
\int_{t}^{t+2\pi}e^{A^{T}\tau}C^{T}Ce^{A\tau}d\tau=\pi I_{2}
\end{eqnarray*}
for all $t$. Therefore $t\mapsto e^{A^{T}t}C^{T}Ce^{At}$ is
persistently exciting.  Now, suppose that $\Gamma$ is connected and
$r\in\Real^{p}$ satisfies \eqref{eqn:r}. Then, by
Theorem~\ref{thm:suffexp} solutions $\xi_{i}(\cdot)$ exponentially
synchronize to $\bar\xi(t)\equiv (r^{T}\otimes I_{2})\xi(0)$. Since
$x_{i}(t)=e^{At}\xi_{i}(t)$ and $e^{At}$ is an orthogonal (hence
norm-preserving) matrix, it follows that solutions $x_{i}(\cdot)$ of
the coupled harmonic oscillators exponentially synchronize to
\begin{eqnarray*}
\bar{x}(t)=(r^{T}\otimes e^{At})\left[\!\! \begin{array}{c}x_{1}(0)\\
\vdots\\x_{p}(0)\end{array}\!\!\right]
\end{eqnarray*}

\subsection{Negative results}\label{subsec:negative}

Before we end this section, we present two negative results, which we
believe constitute answers to questions that arise naturally. The
first of those questions emerges as follows.  In
Theorem~\ref{thm:stability} we have proven that system
$\dot{\ex}=(\Gamma\otimes Q_{t})\ex$, where $\Gamma$ is a {\em fixed}
interconnection and $Q$ is a time-varying SPSD matrix, has a bounded
solution for all initial conditions. It also trivially follows from
the results in, for instance,
\cite{moreau05,lin07} that solution of system
$\dot{\ex}=(\Gamma_{t}\otimes Q)\ex$, where this time interconnection
$\Gamma$ is time-varying and SPSD matrix $Q$ is fixed, is bounded. At
this point, it is tempting to ask the following question.
\begin{center}
{\em Is solution of system $\dot{\ex}=(\Gamma_{t}\otimes Q_{t})\ex$ bounded?}
\end{center}
The answer is {\em not always} and it is formalized in the below result.

\begin{theorem}\label{thm:neg1}
There exist maps $\Gamma:\Real\to\Real^{p\times p}$, where
$\Gamma_{t}$ is an interconnection for each $t$, and
$Q:\Real\to\overline{\Q}_{n}$ such that system
$\dot{\ex}=(\Gamma_{t}\otimes Q_{t})\ex$ has an {\em unbounded}
solution.
\end{theorem}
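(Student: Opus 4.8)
The plan is to exhibit an explicit two-node example ($p=2$) with scalar states ($n=1$), so that $\Gamma_t$ is a $2\times 2$ interconnection, $Q_t$ is a scalar in $[0,1]$, and the Kronecker structure disappears. With $p=2$, an interconnection has the form $\Gamma_t = \gamma(t)\begin{bmatrix}-1 & 1\\ 1 & -1\end{bmatrix}$ for some $\gamma(t)\geq 0$; writing $\ex = (x_1,x_2)^T$ and passing to the ``difference'' coordinate $d = x_1-x_2$ and ``average'' $s = x_1+x_2$, one gets $\dot s = 0$ and $\dot d = -2\gamma(t)Q_t\, d$, which is always contracting — so a genuinely scalar two-node example cannot produce unboundedness. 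Hence I would instead take $n=2$ (states $x_i\in\Real^2$), keep $p=2$, and in the difference coordinate $d\in\Real^2$ obtain $\dot d = -2\gamma(t)\,Q_t\, d$ with $Q_t\in\overline{\Q}_2$. The task then reduces to: find a time-varying rate $\gamma(t)\geq 0$ and a time-varying SPSD projector-like matrix $Q_t$ of norm $\leq 1$ such that $\dot d = -\mu(t)\,Q_t\, d$ has an unbounded solution, where $\mu(t):=2\gamma(t)$ is an arbitrary nonnegative scalar we are free to choose.

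The key mechanism is that although each $-\mu(t)Q_t$ is negative semidefinite (so $|d(t)|$ is nonincreasing when $\mu$ is \emph{bounded}!), there is no contradiction with Theorem~\ref{thm:stability}, because that theorem crucially uses a \emph{fixed} $\Gamma$ — here I would let $\gamma(t)$ be \emph{unbounded}, i.e. the interconnection strength blows up. Wait: if $Q_t=Q_t^T\geq0$ then $d^T\dot d = -\mu(t)d^TQ_td\leq0$ regardless of how large $\mu$ is, so $|d(t)|$ is still nonincreasing. So norm-unboundedness of $d$ itself is impossible in these coordinates. The resolution is that the \emph{original} states $x_i$ can be unbounded even when the difference $d$ stays bounded, because the average/common-mode part is governed by $\dot{\bar x}$ which need not vanish: with $n=2$, $\ex=(x_1,x_2)$, one has $\dot x_1 + \dot x_2 = (\Gamma_t\one)\otimes Q_t\cdots = 0$ only because row sums of $\Gamma_t$ are zero — so actually $\dot s=0$ again. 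I would therefore abandon the $p=2$ reduction and instead use the genuine obstruction: take $p\geq2$ but an interconnection $\Gamma_t$ that is \emph{not symmetric}, so that no quadratic Lyapunov function is available and the cancellations above fail. Concretely, pick a fixed connected asymmetric $\Gamma_0$ and set $\Gamma_t = f(t)\Gamma_0$ with $f(t)\geq0$ unbounded, together with $Q_t$ a rank-deficient projector onto a rotating line in $\Real^2$; the product $\Gamma_t\otimes Q_t$ then has a slowly-varying kernel, and by resonance between the rotation rate of $\ker Q_t$ and the growth of $f$, the component of $\ex$ along the instantaneous kernel can be pumped up — this is the standard ``parametric resonance / slowly rotating kernel'' construction.

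The concrete steps I would carry out: (i) fix $n=2$, $Q_t = R(\theta(t))\,\mathrm{diag}(1,0)\,R(\theta(t))^T$ where $R(\theta)$ is planar rotation and $\theta(\cdot)$ is to be chosen (so $Q_t\in\overline{\Q}_2$ automatically); (ii) fix $p=2$ with asymmetric $\Gamma_0$ — actually $2\times2$ interconnections are forced symmetric, so take $p=3$ with a directed-cycle-type $\Gamma_0$, or alternatively stay at $p=2$ and exploit that the relevant dynamics for $\ex-\bar\ex$ is $\dot\xi=((\Gamma_0-\one r^T)\otimes Q_t)\xi$ where $\Gamma_0-\one r^T$ need \emph{not} be normal; (iii) choose $\Gamma_t = f(t)\Gamma_0$; write the $\xi$-equation in the rotating frame $\eta(t) = (I_p\otimes R(\theta(t))^T)\xi(t)$, obtaining $\dot\eta = \big(f(t)(\Gamma_0-\one r^T)\otimes \mathrm{diag}(1,0) - \dot\theta(t)(I_p\otimes J)\big)\eta$ with $J=\begin{bmatrix}0&-1\\1&0\end{bmatrix}$; (iv) choose $\dot\theta$ small and $f$ large in a coordinated way (e.g. $\theta$ piecewise constant with brief fast rotations, $f$ large on the constant stretches) so that on each stretch $\eta$ is squeezed onto the second coordinate (the kernel direction) while the eigenstructure of $(\Gamma_0-\one r^T)$ restricted appropriately has positive real part on that subspace — forcing geometric growth per period; (v) sum up over infinitely many periods to get $|\xi(t)|\to\infty$, hence $|\ex(t)|\to\infty$. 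The main obstacle is step (iv): making the per-period amplification genuinely exceed $1$ requires that the interaction between the (non-normal) matrix $\Gamma_0-\one r^T$ compressed to the kernel direction of $Q_t$ and the rotation actually yields net expansion; I expect to need a careful choice of a switching schedule (an explicit piecewise-constant $\theta$ and $f$) plus a Coppel/Perron-type estimate showing the product of the per-period transition matrices has norm growing without bound. Everything else — verifying $Q_t\in\overline{\Q}_n$, $\Gamma_t$ being a valid interconnection for each $t$, measurability/Riemann-integrability — is routine.
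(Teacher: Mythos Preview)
Your proposal has a genuine gap that makes the entire construction collapse. You settle on $\Gamma_{t}=f(t)\Gamma_{0}$ for a fixed interconnection $\Gamma_{0}$ and a scalar $f(t)\geq 0$, hoping that letting $f$ blow up will produce instability. But $(f(t)\Gamma_{0})\otimes Q_{t}=\Gamma_{0}\otimes(f(t)Q_{t})$, so your system is exactly $\dot{\ex}=(\Gamma_{0}\otimes\widetilde{Q}_{t})\ex$ with $\widetilde{Q}_{t}:=f(t)Q_{t}\in\Q_{n}$. This is a \emph{fixed}-interconnection system, and Theorem~\ref{thm:stability} (which requires only $\widetilde{Q}_{t}\in\Q_{n}$, not $\overline{\Q}_{n}$) gives $|\ex(t)|\leq\alpha|\ex(0)|$ for all $t$. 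No choice of $f$, $\theta$, or switching schedule can produce an unbounded solution here. Equivalently, the Lyapunov function $V(\ex-\bar\ex)=(\ex-\bar\ex)^{T}(\Omega\otimes I_{n})(\ex-\bar\ex)$ of Lemma~\ref{lem:Vdot} is nonincreasing along your dynamics regardless of how large $f(t)$ is, because $\Omega$ depends only on $\Gamma_{0}$. Your step~(iv) also appeals to ``positive real part'' of an eigenvalue of $\Gamma_{0}-\one r^{T}$ restricted to a subspace, but Lemma~\ref{lem:omega} shows $\Gamma_{0}-\one r^{T}$ is Hurwitz, so no such eigenvalue exists.

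A smaller error: $2\times 2$ interconnections are \emph{not} forced symmetric; $\gamma_{12}$ and $\gamma_{21}$ are independent nonnegative parameters, and for $\gamma_{12}\neq\gamma_{21}$ the sum $x_{1}+x_{2}$ is not conserved. This does not rescue your argument, however, since Theorem~\ref{thm:stability} still applies.

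The mechanism you are missing is that $\Gamma_{t}$ must vary in \emph{direction} (topology), not merely in scale. The paper's construction takes $p=4$, $n=2$, and cycles periodically through four genuinely different interconnections $\Gamma_{\rm a},\Gamma_{\rm b},\Gamma_{\rm c},\Gamma_{\rm d}$ (different nonzero patterns, hence different graphs and different left null vectors $r$), each paired with its own rank-one projection $Q_{\rm a},Q_{\rm b},Q_{\rm c},Q_{\rm d}\in\overline{\Q}_{2}$. Because the $\Gamma$'s are not scalar multiples of one another, no single $\Omega$ works for all four, and the argument of Lemma~\ref{lem:Vdot} breaks. Instability is then verified by computing the period-$40$ monodromy matrix and checking (numerically) that it has an eigenvalue of modulus about $2$.
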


\begin{proof}
We construct $\Gamma$ and $Q$ as follows. Let
\begin{eqnarray*}
\Gamma_{\rm a}:=
\left[
\begin{array}{rrrr}
-1&1&0&0\\
0&0&0&0\\
0&0&0&0\\
0&0&1&-1
\end{array}
\right]\ ,\quad
\Gamma_{\rm b}:=
\left[
\begin{array}{rrrr}
0&0&0&0\\
0&-1&0&1\\
1&0&-1&0\\
0&0&0&0
\end{array}
\right]\\
\Gamma_{\rm c}:=
\left[
\begin{array}{rrrr}
-1&0&1&0\\
0&0&0&0\\
0&0&0&0\\
0&1&0&-1
\end{array}
\right]\ ,\quad
\Gamma_{\rm d}:=
\left[
\begin{array}{rrrr}
0&0&0&0\\
1&-1&0&0\\
0&0&-1&1\\
0&0&0&0
\end{array}
\right]
\end{eqnarray*}
and
\begin{eqnarray*}
Q_{\rm a}:=
\left[
\begin{array}{rr}
1&0\\0&0
\end{array}
\right]\ ,\quad
Q_{\rm b}:=
\left[
\begin{array}{rr}
0&0\\0&1
\end{array}
\right]\ ,\quad
Q_{\rm c}:=
\left[
\begin{array}{rr}
0.5&0.5\\0.5&0.5
\end{array}
\right]\ ,\quad
Q_{\rm d}:=
\left[
\begin{array}{rr}
0.5&-0.5\\-0.5&0.5
\end{array}
\right]
\end{eqnarray*}
Now let both $\Gamma$ and $Q$ be periodic with period $T=40$ with 
\begin{eqnarray*}
\Gamma_{t}:=\left\{
\begin{array}{lcr}
\Gamma_{\rm a}&\mbox{for}&0\leq t < 10\\
\Gamma_{\rm b}&\mbox{for}&10\leq t < 20\\
\Gamma_{\rm c}&\mbox{for}&20\leq t < 30\\
\Gamma_{\rm d}&\mbox{for}&30\leq t < 40
\end{array}
\right.\quad
\mbox{and}\quad
Q_{t}:=\left\{
\begin{array}{lcr}
Q_{\rm a}&\mbox{for}&0\leq t < 10\\
Q_{\rm b}&\mbox{for}&10\leq t < 20\\
Q_{\rm c}&\mbox{for}&20\leq t < 30\\
Q_{\rm d}&\mbox{for}&30\leq t < 40
\end{array}
\right.
\end{eqnarray*}
Whence we can write for $k=0,\,1,\,\ldots$
\begin{eqnarray*}
\ex(kT)={\mathbf A}^{k}
\ex(0)
\end{eqnarray*}
where ${\mathbf A}:=e^{(\Gamma_{\rm d}\otimes Q_{\rm
d})10}e^{(\Gamma_{\rm c}\otimes Q_{\rm c})10}e^{(\Gamma_{\rm b}\otimes
Q_{\rm b})10}e^{(\Gamma_{\rm a}\otimes Q_{\rm a})10}$. When the
eigenvalues of ${\mathbf A}$ are numerically checked, one finds that
there is an eigenvalue outside the unit circle ($|\lambda|\approx 2$),
which lets us deduce that the origin of system
$\dot{\ex}=(\Gamma_{t}\otimes Q_{t})\ex$ is unstable.
\end{proof}

Our first question was concerned with stability under time-varying
interconnection; and we have seen that solutions $x_{i}(\cdot)$ of
array~\eqref{eqn:system} need not stay bounded in such a case. The
second question is about synchronization. By Theorem~\ref{thm:suff} we
know that if map $Q:\Real_{\geq 0}\to
\overline{\Q}_{n}$ is sufficiently exciting, then for all connected
interconnection $\Gamma$ solutions of array~\eqref{eqn:system}
synchronize. Now, suppose that we are given some sufficiently exciting
$Q$ with an associated sequence
$(\varepsilon_{i},\,T_{i})_{i=1}^{\infty}$, see
Definition~\ref{def:suff}. Note that, due to \eqref{eqn:delta}, we have
$\sum_{i=1}^{\infty}\varepsilon_{i}=\infty$. In addition, since
$Q_{t}\in\overline{\Q}_{n}$ for all $t$, we have
$T_{i}\geq\varepsilon_{i}$, which yields
$\sum_{i=1}^{\infty}T_{i}=\infty$. Applying these observations on
\eqref{eqn:ei}, we obtain
\begin{eqnarray}\label{eqn:issuff}
\lim_{T\to\infty}\sigma_{\rm min}\left(\int_{0}^{T}Q_{t}dt\right)=\infty\,.
\end{eqnarray}
Above condition, depicted in \eqref{eqn:issuff}, can be shown to be necessary
for synchronization. Now, we ask the following question. 
\begin{center}
{\em Is condition~\eqref{eqn:issuff} sufficient for synchronization?}
\end{center}
The answer turns out to be negative.  In fact, even a
much stronger condition is not sufficient for synchronization
as the following result shows.

\begin{theorem}\label{thm:neg2}
There exist connected interconnection $\Gamma\in\Real^{p\times p}$ and
map $Q:\Real\to \overline{\Q}_{n}$ satisfying
\begin{eqnarray*}
\liminf_{T\to\infty}\frac{1}{T}\ \sigma_{\rm min}\left(\int_{0}^{T}Q_{t}dt\right)>0
\end{eqnarray*}
such that solutions $x_{i}(\cdot)$ of array~\eqref{eqn:system} do
{\em not} synchronize.
\end{theorem}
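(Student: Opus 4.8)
The plan is to construct a counterexample in which the output matrix $Q$ degenerates, in the right-null-space direction, on a schedule that is slow enough to prevent synchronization but whose ``dead times'' are short enough not to affect $\sigma_{\rm min}(\int_0^T Q_t\,dt)$ in the averaged sense. The simplest setting: take $n=1$ is impossible (a scalar $Q\geq 0$ that is persistently exciting in the averaged sense would force synchronization on a connected graph by essentially the argument of Theorem~\ref{thm:suff}), so I would work with $n=2$ and a $p=2$ (or $p=3$) connected interconnection $\Gamma$. The idea is that at any instant $Q_t$ is rank one, and the rank-one direction rotates, but it rotates so slowly (spending longer and longer epochs pointing in a fixed direction) that the coupling along the complementary direction never accumulates enough to collapse the disagreement there, even though $\int_0^T Q_t\,dt$ still grows linearly in $T$ so that $\liminf_{T\to\infty}\frac{1}{T}\sigma_{\rm min}(\int_0^T Q_t\,dt)>0$.

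First I would fix $\Gamma = \left[\begin{smallmatrix}-1 & 1\\ 1 & -1\end{smallmatrix}\right]$, so $p=2$, $r^T = [\tfrac12\ \tfrac12]$, and the disagreement vector $\eta(t) := x_1(t)-x_2(t) \in \Real^2$ evolves by $\dot\eta = -2 Q_t\,\eta$ (since $\Gamma$ acting on the disagreement mode gives the eigenvalue $-2$). So the whole question reduces to: can we find $Q:\Real_{\geq 0}\to\overline\Q_2$ with $\liminf \frac1T\sigma_{\rm min}(\int_0^T Q_t\,dt)>0$ such that the solution of $\dot\eta = -2Q_t\eta$ does \emph{not} go to zero? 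This is exactly the obstruction to Corollary~\ref{cor:general} going through under the weaker averaged condition. Next I would choose $Q_t$ to be the orthogonal projection onto a slowly rotating line: $Q_t = v(t)v(t)^T$ with $v(t) = (\cos\theta(t), \sin\theta(t))$, for a carefully chosen angle function $\theta$. Along such dynamics, $\dot\eta = -2(v^Tv)\,\eta$ kills the component of $\eta$ along $v(t)$ essentially instantaneously on the fast timescale but does nothing to the component orthogonal to $v(t)$. If $\theta$ is piecewise constant, alternating between $0$ and $\pi/2$ on intervals of lengths $1, 1, 2, 4, 8, \dots$ (doubling), then on each epoch the ``surviving'' coordinate of $\eta$ is preserved exactly while the other is driven to (near) zero; at the switch, the roles swap, but because the newly-killed coordinate had just been \emph{reset} to the value the surviving coordinate held, one shows $|\eta|$ stays bounded below by a fixed constant times its initial size — it never synchronizes, though it also does not blow up. Meanwhile $\int_0^T Q_t\,dt$ is a diagonal matrix $\mathrm{diag}(a(T), b(T))$ where $a(T)+b(T)=T$ and, because the two directions get comparable total time up to a factor of $2$ (doubling epochs alternate), $\min(a(T),b(T)) \geq cT$ for a fixed $c>0$ along a subsequence — enough for the $\liminf$ bound.

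I would then verify the three requirements in order: (i) $Q_t\in\overline\Q_2$ for all $t$ — immediate since projections have norm $1$; (ii) the $\liminf$ condition — a routine computation tracking the partial sums $1+1+2+4+\cdots$ and checking that the ratio $\min(a(T),b(T))/T$ stays bounded away from $0$ (it oscillates but its $\liminf$ is a positive constant, roughly $1/3$); (iii) non-synchronization — show $|\eta(t)|\not\to 0$ by exhibiting a sequence of times $t_k\to\infty$ with $|\eta(t_k)|\geq \delta$. For (iii) the clean way is: just after the $k$th switch the component along the \emph{new} $v$ equals (a fixed fraction of) the component that was being preserved during epoch $k$, so if we start with $\eta(0)=(1,0)$ — i.e., initially orthogonal to $v(0)$ if $v(0)=e_2$ — that unit of ``energy'' gets handed back and forth forever and never dissipates below a constant. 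The only subtlety is that during an epoch the killed component is driven toward zero but not exactly to zero (the ODE is $\dot\eta_{\text{killed}} = -2\eta_{\text{killed}}$, so after time $\tau$ it is multiplied by $e^{-2\tau}$, which is tiny but nonzero); since epoch lengths grow, this residual is $\leq e^{-2}$ after the first epoch and decays, so it is harmless.

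\textbf{The main obstacle} I anticipate is pinning down condition (ii) — making sure the averaged $\sigma_{\rm min}$ genuinely grows linearly and not merely sublinearly. With strictly doubling epochs, at a time $T$ just after a long epoch in direction $e_1$, the coordinate $b(T)$ (total time in direction $e_2$) is roughly $T/2$, which is fine; but one must check there is no $T$ along which one coordinate's share shrinks to $o(T)$. Because consecutive epochs differ by exactly a factor of $2$, the worst case is a ratio of about $1:2$ (so $\min/T \approx 1/3$), and this is bounded away from $0$ — so the $\liminf$ is positive. If one wanted the stronger statement with $\lim$ (not just $\liminf$) one would need to interleave the directions more evenly (e.g. epochs of lengths $1,1,2,2,4,4,\dots$ or a round-robin over three directions in $\Real^2$ that are pairwise non-orthogonal), but the theorem as stated only asks for $\liminf$, so the doubling construction suffices. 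A secondary, purely bookkeeping obstacle is handling the small nonzero residuals of the ``killed'' coordinate cleanly in the non-synchronization argument; the fix is simply to note these residuals are summable (they are bounded by $e^{-2\cdot(\text{epoch length})}$ and epoch lengths grow), so they cannot erode more than a fixed fraction of the conserved energy over all time, leaving $\liminf_t|\eta(t)|>0$.
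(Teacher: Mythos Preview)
Your construction does not give a counterexample: with $Q_t$ a piecewise-constant projection alternating between $e_1e_1^{T}$ and $e_2e_2^{T}$, the disagreement $\eta$ \emph{does} converge to zero. On an $e_1$-epoch of length $\tau$ the equation $\dot\eta=-2Q_t\eta$ multiplies $\eta_1$ by $e^{-2\tau}$ and freezes $\eta_2$; on the next ($e_2$-)epoch the roles swap. Hence the coordinate that ``survives'' an epoch is precisely the one that was driven toward zero in the \emph{preceding} epoch---no energy is handed back and forth. Concretely, with epoch lengths $1,1,2,4,8,\dots$ and $\eta(0)=(0,1)$ orthogonal to $v(0)=e_1$: after epoch~1, $\eta=(0,1)$; after epoch~2, $\eta=(0,e^{-2})$; after epoch~3, $\eta=(0,e^{-2})$; after epoch~4, $\eta=(0,e^{-10})$; and the second coordinate keeps collapsing while the first stays at $0$. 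Your key sentence (``the newly-killed coordinate had just been reset to the value the surviving coordinate held'') has the roles reversed: the newly-killed coordinate is the \emph{old surviving} one, and the newly-surviving coordinate is the old killed one, already tiny. Your $\liminf$ computation is fine; it is step~(iii) that fails.

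The mechanism the paper exploits---and which your piecewise-orthogonal scheme lacks---is that the projection direction rotates \emph{continuously and slowly}, so the solution can track the null space of $Q_t$. With $Q_t=v(t)v(t)^{T}$, $v(t)=(\cos\vartheta(t),\sin\vartheta(t))$, and $\dot\vartheta$ a small constant on each epoch, the polar-coordinate dynamics of $\dot x=-Q_t x$ give $\dot\theta=\dot\vartheta$ exactly, so the angular offset stays fixed at some small $\varepsilon_k$ and the radial decay rate is only $\sin^{2}\varepsilon_k$. Choosing $\varepsilon_k=2^{-k}$ and epoch lengths $2\pi/(\sin\varepsilon_k\cos\varepsilon_k)$ makes the per-epoch radial contraction $e^{-2\pi\tan\varepsilon_k}$, which is summable, while each epoch contributes $\tfrac{1}{2}(\tau_k-\tau_{k-1})I_2$ to $\int_0^T Q_t\,dt$, giving the linear growth of $\sigma_{\rm min}$. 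If you want to salvage a piecewise-constant version, you must replace the single $\pi/2$ jump by many small angular increments $\delta_k$ with $\sum_k\delta_k=\infty$ (so the direction sweeps all angles and $\sigma_{\rm min}(\int Q)$ grows linearly) but $\sum_k\delta_k^{2}<\infty$ (so that $\prod_k\cos\delta_k>0$ and a nonzero fraction of $|\eta|$ survives each switch); a $\pi/2$ jump kills the entire preserved component in one shot.
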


\begin{proof}
As in the proof of the previous result, we will once again make use of
projection matrices. Let $\varepsilon_{k}=2^{-k}$ for
$k=1,\,2,\,\ldots$ Then, let piecewise linear function
$\vartheta:\Real_{\geq 0}\to\Real$ be
\begin{eqnarray*}
\vartheta(t):=\vartheta(\tau_{k-1})+t\sin\varepsilon_{k}\cos\varepsilon_{k}\quad
\mbox{for}\quad t\in[\tau_{k-1},\,\tau_{k})
\end{eqnarray*}
where 
$\tau_{0}=0$, $\vartheta(0)=0$, and
\begin{eqnarray*}
\tau_{k}&=&\tau_{k-1}+\frac{2\pi}{\sin\varepsilon_{k}\cos\varepsilon_{k}}\\
\vartheta(\tau_{k})&=&\lim_{t\to\tau_{k}^{-}}\vartheta(t)-\varepsilon_{k+1}
\end{eqnarray*}
for $k=1,\,2,\,\ldots$
Now define $Q:\Real_{\geq 0}\to
\overline\Q_{2}$ as 
\begin{eqnarray*}
Q_{t}:=\left[
\begin{array}{rr}
\cos^2\vartheta(t)&\sin\vartheta(t)\cos\vartheta(t)\\
\sin\vartheta(t)\cos\vartheta(t)&\sin^{2}\vartheta(t)
\end{array}
\right]
\end{eqnarray*}
which is a projection matrix that projects onto the line spanned by
the vector $[\cos\vartheta(t)\ \sin\vartheta(t)]^{T}$. Calculations yield
\begin{eqnarray*}
\frac{1}{\tau_{k}-\tau_{k-1}}
\int_{\tau_{k-1}}^{\tau_{k}}Q_{t}dt=\frac{1}{2}I_{2}\,.
\end{eqnarray*}
Observe that 
\begin{eqnarray}\label{eqn:nokia}
\lim_{k\to\infty}\frac{\tau_{k-1}}{\tau_{k}}=\frac{1}{2}
\end{eqnarray}
and 
\begin{eqnarray*}
\sigma_{\rm min}\left(\alpha I_{2}+\int_{t_{1}}^{t_{2}}Q_{t}dt\right)=
\alpha+\sigma_{\rm min}\left(\int_{t_{1}}^{t_{2}}Q_{t}dt\right)
\end{eqnarray*}
for all $\alpha\geq 0$ and $t_{2}\geq t_{1}\geq 0$. Given
$T\in(\tau_{k-1},\,\tau_{k}]$ we can write 
\begin{eqnarray*}
\frac{1}{T}\ \sigma_{\rm min}\left(\int_{0}^{T}Q_{t}dt\right)
&=& \frac{1}{T}\ \sigma_{\rm
min}\left(\int_{0}^{\tau_{k-1}}Q_{t}dt+\int_{\tau_{k-1}}^{T}Q_{t}dt\right)\\
&=& \frac{1}{T}\ \sigma_{\rm
min}\left(\frac{\tau_{k-1}}{2}I_{2}+\int_{\tau_{k-1}}^{T}Q_{t}dt\right)\\
&\geq& \frac{\tau_{k-1}}{2T}\\
&\geq& \frac{\tau_{k-1}}{2\tau_{k}}
\end{eqnarray*}
which yields by \eqref{eqn:nokia} that
\begin{eqnarray*}
\liminf_{T\to\infty}\frac{1}{T}\ \sigma_{\rm min}\left(\int_{0}^{T}Q_{t}dt\right)
\geq\frac{1}{4}\,.
\end{eqnarray*}

Let us now consider \eqref{eqn:system} under the following connected
interconnection
\begin{eqnarray*}
\Gamma:=\left[
\begin{array}{rr}
-1 & 1 \\
 0 & 0
\end{array}
\right]
\end{eqnarray*}
Setting $x_{2}(0)=0$ we
can write
\begin{eqnarray}\label{eqn:euclidean} 
\dot{x}_{1}=-Q_{t}x_{1}\,.
\end{eqnarray}
Note that we need $\lim_{t\to\infty}x_{1}(t)=0$ for synchronization
since $x_{2}(\cdot)\equiv 0$.  In terms of polar coordinates,
i.e. $x_{1}=[r\cos\theta\ r\sin\theta]^{T}$, we can express \eqref{eqn:euclidean}
as
\begin{subeqnarray}\label{eqn:rt}
\dot{r}&=&-r\sin^{2}(\vartheta(t)-\theta-\pi/2)\\
\dot{\theta}&=&\sin(\vartheta(t)-\theta-\pi/2)\cos(\vartheta(t)-\theta-\pi/2)\,.
\end{subeqnarray}
Let us initialize $x_{1}$ such that $r>0$ and
$\theta(0)=-\pi/2-\varepsilon_{1}$. We then observe that
$\dot{\theta}(t)=\dot{\vartheta}(t)$ for all $t\geq 0$. Eq.~\eqref{eqn:rt}
simplifies to 
\begin{eqnarray*}
\dot{r}&=&-r\sin^{2}\varepsilon_{k}\\
\dot{\theta}&=&\sin\varepsilon_{k}\cos\varepsilon_{k}
\end{eqnarray*}
for $t\in[\tau_{k-1},\,\tau_{k})$ and $k=1,\,2,\,\ldots$ Thence 
\begin{eqnarray*}
r(\tau_{k})=r(\tau_{k-1})e^{-(\tau_{k}-\tau_{k-1})\sin^{2}\varepsilon_{k}}
\end{eqnarray*}
which yields
\begin{eqnarray*}
r(\tau_{k})
&=&r(0)\exp\left(-2\pi\sum_{i=1}^{k}\tan\varepsilon_{i}\right)\\
&=&r(0)\exp\left(-2\pi\sum_{i=1}^{k}\tan2^{-i}\right)\\
&\geq&r(0)\exp\left(-4\pi\sum_{i=1}^{k}2^{-i}\right)\\
&\geq&r(0)e^{-4\pi}\
\end{eqnarray*}
for all $k$. Therefore $\lim_{t\to\infty}x_{1}(t)\neq 0$. 
\end{proof}

\section{Observability grammian and synchronizability}\label{sec:main}

Based on the results of the previous section, we are now ready to
establish our theorems that are aimed to reveal the correlation
between synchronizability and observability grammian. We begin with
two definitions.

\begin{definition}\label{def:asyobs}
For $A:\Real\to\Real^{n\times n}$ and $C:\Real\to\Real^{m\times n}$,
pair $(C,\,A)$ is said to be {\em asymptotically observable} if 
the integrand of 
the observability grammian, $t\mapsto \Phi_{A}^{T}(t,\,0)C^{T}(t)C(t)
\Phi_{A}(t,\,0)$, is sufficiently exciting.
\end{definition}

The following definition is borrowed (with slight modification) from
\cite{chen99}.

\begin{definition}\label{def:uniobs}
For $A:\Real\to\Real^{n\times n}$ and $C:\Real\to\Real^{m\times n}$,
pair $(C,\,A)$ is said to be {\em uniformly observable} if there
exists a pair of positive real numbers $(\varepsilon,\,T)$ such that 
\begin{eqnarray*}
\sigma_{\rm min}(W_{\rm o}(t,\,t+T))\geq\varepsilon
\end{eqnarray*}
for all $t\geq 0$.
\end{definition}

\begin{remark}\label{rem:obsdef}
For $A:\Real\to\Real^{n\times n}$ and $C:\Real\to\Real^{m\times n}$
satisfying Assumption~\ref{assume:AC}, asymptotic observability of
pair $(C,\,A)$ implies uniform observability of $(C,\,A)$.  For a
time-invariant pair, which need {\em not} satisfy
Assumption~\ref{assume:AC}; asymptotic observability, uniform
observability, and the standard definition of observability (for
time-invariant linear systems) are all equivalent.
\end{remark}

The next result is our main theorem. It states that a time-varying
pair $(C,\,A)$ is synchronizable if it is asymptotically observable.

\begin{theorem}\label{thm:ACmain}
Let $A:\Real\to\Real^{n\times n}$ and $C:\Real\to\Real^{m\times n}$
satisfy Assumption~\ref{assume:AC}. If pair $(C,\,A)$ is asymptotically
observable, then it is synchronizable. In particular, if we choose
$L:\Real\to\Real^{n\times n}$ as in \eqref{eqn:L}, then for each
$\Gamma\in\G_{>0}$, solutions $x_{i}(\cdot)$ of array~\eqref{eqn:array}
with $u_{i}=L(t)z_{i}$ synchronize to
\begin{eqnarray}\label{eqn:arnold}
\bar{x}(t):=(r^{T}\otimes\Phi_{A}(t,\,0))
\left[\!\!
\begin{array}{c}
x_{1}(0)\\
\vdots\\
x_{p}(0)
\end{array}\!\!
\right]
\end{eqnarray}
where $r\in\Real^{p}$ satisfies \eqref{eqn:r}.
\end{theorem}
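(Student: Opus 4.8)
The plan is to reduce the claim to Theorem~\ref{thm:suff} through the change of variables \eqref{eqn:xi} that the excerpt has already carried out. First I would check that the feedback law $L$ of \eqref{eqn:L} is admissible in the sense of Definition~\ref{def:sync}: by Assumption~\ref{assume:AC}, $|L(t)|=|\Phi_{A}(t,0)\Phi_{A}^{T}(t,0)C^{T}(t)|\leq\bar{a}^{2}\bar{c}$ for all $t$, so $L$ is a bounded, time-varying linear feedback law, and it depends neither on $\Gamma$ nor on $p$. Fix an arbitrary $\Gamma\in\G_{>0}$ and run array~\eqref{eqn:array} with $u_{i}=L(t)z_{i}$. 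Setting $\xi_{i}(t):=\Phi_{A}(0,t)x_{i}(t)$, the closed loop becomes \eqref{eqn:aux}, i.e. $\dot{\xi}_{i}=Q_{t}\sum_{j\neq i}\gamma_{ij}(\xi_{j}-\xi_{i})$ with $Q_{t}:=\Phi_{A}^{T}(t,0)C^{T}(t)C(t)\Phi_{A}(t,0)$. This is exactly array~\eqref{eqn:system} driven by the integrand $Q$ of the observability grammian, and $Q_{t}=Q_{t}^{T}\geq 0$ at each $t$.

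Next I would normalize so that Theorem~\ref{thm:suff} applies. By Assumption~\ref{assume:AC} we have $|Q_{t}|\leq\bar{a}^{2}\bar{c}^{2}=:h$, and $h\geq 1$ because $\bar{a},\bar{c}\geq 1$. Rewriting $Q_{t}\sum_{j\neq i}\gamma_{ij}(\xi_{j}-\xi_{i})=(Q_{t}/h)\sum_{j\neq i}(h\gamma_{ij})(\xi_{j}-\xi_{i})$ exhibits the $\xi$-dynamics as array~\eqref{eqn:system} with bounded output matrix $\hat{Q}:=Q/h:\Real_{\geq 0}\to\overline{\Q}_{n}$ and interconnection $h\Gamma$; as noted in Remark~\ref{rem:bnd}, $h\Gamma$ is again a connected interconnection, and since $r^{T}(h\Gamma)=0$ iff $r^{T}\Gamma=0$, it shares the same $r$ of \eqref{eqn:r}. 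Because $(C,A)$ is asymptotically observable, $Q$ is sufficiently exciting, so there is a sequence $(\varepsilon_{i},T_{i})$ with $\sigma_{\rm min}(\int_{t_{i}}^{t_{i}+T_{i}}Q_{t}\,dt)\geq\varepsilon_{i}$, $t_{i}=\sum_{j=1}^{i-1}T_{j}$, and $\sum_{i}\delta(\varepsilon_{i},T_{i})=\infty$. Then $\sigma_{\rm min}(\int_{t_{i}}^{t_{i}+T_{i}}\hat{Q}_{t}\,dt)\geq\varepsilon_{i}/h$, and, since $h\geq 1$, definition \eqref{eqn:delta} gives $\delta(\varepsilon_{i}/h,T_{i})\geq h^{-3}\delta(\varepsilon_{i},T_{i})$; hence $\sum_{i}\delta(\varepsilon_{i}/h,T_{i})=\infty$ and $\hat{Q}$ is sufficiently exciting too.

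Now I would apply Theorem~\ref{thm:suff} to the $\xi$-system with output matrix $\hat{Q}$ and connected interconnection $h\Gamma$: the solutions $\xi_{i}(\cdot)$ synchronize to $(r^{T}\otimes I_{n})[\xi_{1}(0)^{T}\ \cdots\ \xi_{p}(0)^{T}]^{T}=\sum_{i=1}^{p}r_{i}\xi_{i}(0)$. Since $\xi_{i}(0)=\Phi_{A}(0,0)x_{i}(0)=x_{i}(0)$, this common limit equals $\sum_{i}r_{i}x_{i}(0)$. Transferring back through $x_{i}(t)=\Phi_{A}(t,0)\xi_{i}(t)$ and using $|\Phi_{A}(t,0)|\leq\bar{a}$ from Assumption~\ref{assume:AC}(a), I get $|x_{i}(t)-\Phi_{A}(t,0)\sum_{j}r_{j}x_{j}(0)|=|\Phi_{A}(t,0)(\xi_{i}(t)-\sum_{j}r_{j}x_{j}(0))|\leq\bar{a}\,|\xi_{i}(t)-\sum_{j}r_{j}x_{j}(0)|\to 0$, so the $x_{i}(\cdot)$ synchronize to $\bar{x}(t)=\Phi_{A}(t,0)\sum_{i}r_{i}x_{i}(0)=(r^{T}\otimes\Phi_{A}(t,0))[x_{1}(0)^{T}\ \cdots\ x_{p}(0)^{T}]^{T}$, which is \eqref{eqn:arnold}. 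As $\Gamma\in\G_{>0}$ was arbitrary, synchronizability follows.

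The only genuinely delicate point is the normalization $Q\mapsto Q/h$: one must check that $h\Gamma$ remains a connected interconnection with unchanged $r$ (routine), and that sufficient excitation survives division by $h$, which is where the monotonicity $\delta(\varepsilon/h,T)\geq h^{-3}\delta(\varepsilon,T)$ — valid precisely because $h\geq 1$ — is used. Everything else is the change of variables \eqref{eqn:xi}, already performed in the excerpt, together with the uniform bound $|\Phi_{A}|\leq\bar{a}$ used to pull the limit back to the original coordinates.
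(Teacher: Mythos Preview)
Your argument is correct and follows essentially the same route as the paper: the change of variables $\xi_{i}=\Phi_{A}(0,t)x_{i}$, normalization of $Q$ so that it lands in $\overline{\Q}_{n}$ (absorbing the constant into $\Gamma$), application of Theorem~\ref{thm:suff}, and transfer back via the uniform bound on $\Phi_{A}$. You are in fact slightly more careful than the paper in two places: you use the tight scaling $h=\bar{a}^{2}\bar{c}^{2}$ (the paper scales by $\bar{a}\bar{c}$), and you explicitly verify that sufficiency of excitation survives the rescaling via $\delta(\varepsilon/h,T)\geq h^{-3}\delta(\varepsilon,T)$, which the paper simply asserts.
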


\begin{proof}
Let $L$ be as in \eqref{eqn:L}; then it is bounded by
Assumption~\ref{assume:AC}. Let $\Gamma\in\Real^{p\times p}$ be a
connected interconnection and $r\in\Real^{p}$ satisfy
\eqref{eqn:r}. Consider array~\eqref{eqn:array} with
$u_{i}=L(t)z_{i}$. Define auxiliary variables $\xi_{i}$ as
\begin{eqnarray}\label{eqn:grothendieck}
\xi_{i}(t)=\Phi_{A}(0,\,t)x_{i}(t)\,.
\end{eqnarray}
Then, we can write
\begin{eqnarray}\label{eqn:xibarabarc}
\dot{\xi}_{i}=Q_{t}\sum_{j\neq i}\hat\gamma_{ij}(\xi_{j}-\xi_{i})
\end{eqnarray}
where
$Q_{t}:=(\bar{a}\bar{c})^{-1}\Phi_{A}^{T}(t,\,0)C^{T}(t)C(t)\Phi_{A}(t,\,0)$,
$\hat\gamma_{ij}:=\bar{a}\bar{c}\gamma_{ij}$, and
$\bar{a},\,\bar{c}\geq 1$ come from Assumption~\ref{assume:AC}. Note
that $Q_{t}\in\overline{\Q}_{n}$ for all $t\geq 0$, $t\mapsto Q_{t}$
is sufficiently exciting, and $\widehat\Gamma:=[\hat\gamma_{ij}]$
is a connected interconnection satisfying
$r^{T}\widehat\Gamma=0$. We now invoke Theorem~\ref{thm:suff} on
\eqref{eqn:xibarabarc} to deduce that 
solutions $\xi_{i}(\cdot)$ synchronize to 
\begin{eqnarray}\label{eqn:rene}
\bar{\xi}(t)\equiv(r^{T}\otimes I_{n})
\left[\!\!
\begin{array}{c}
x_{1}(0)\\
\vdots\\
x_{p}(0)
\end{array}\!\!
\right]
\end{eqnarray}
Recall that $\Phi_{A}$ is bounded. Hence, 
combining \eqref{eqn:rene} and \eqref{eqn:grothendieck} yields
\eqref{eqn:arnold}. 
\end{proof}

As noted in Remark~\ref{rem:obsdef}, uniform observability is more
restrictive a condition than asymptotic observability. However, it has
a stronger outcome as stated by the following theorem.

\begin{theorem}\label{thm:ACexp}
Let $A:\Real\to\Real^{n\times n}$ and $C:\Real\to\Real^{m\times n}$
satisfy Assumption~\ref{assume:AC}. Then pair $(C,\,A)$ is
synchronizable if it is uniformly observable. In particular, if we
choose $L:\Real\to\Real^{n\times n}$ as in \eqref{eqn:L}, then for each
$\Gamma\in\G_{>0}$, solutions $x_{i}(\cdot)$ of array~\eqref{eqn:array}
with $u_{i}=L(t)z_{i}$ exponentially synchronize to
\begin{eqnarray*}
\bar{x}(t):=(r^{T}\otimes\Phi_{A}(t,\,0))
\left[\!\!
\begin{array}{c}
x_{1}(0)\\
\vdots\\
x_{p}(0)
\end{array}\!\!
\right]
\end{eqnarray*}
where $r\in\Real^{p}$ satisfies \eqref{eqn:r}.
\end{theorem}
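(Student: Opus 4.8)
The plan is to mirror the proof of Theorem~\ref{thm:ACmain} almost verbatim, swapping the appeal to Theorem~\ref{thm:suff} for an appeal to Theorem~\ref{thm:suffexp}. First I would take $L$ as in \eqref{eqn:L}; by Assumption~\ref{assume:AC} both $\Phi_A$ and $C$ are bounded, so $L$ is bounded. Fix a connected interconnection $\Gamma\in\G_{>0}$ and let $r\in\Real^p$ satisfy \eqref{eqn:r}. As in \eqref{eqn:grothendieck}, introduce the auxiliary variables $\xi_i(t):=\Phi_A(0,\,t)x_i(t)$, which satisfy \eqref{eqn:aux}; rescaling as in the proof of Theorem~\ref{thm:ACmain}, set $Q_t:=(\bar a\bar c)^{-1}\Phi_A^T(t,\,0)C^T(t)C(t)\Phi_A(t,\,0)$ and $\hat\gamma_{ij}:=\bar a\bar c\,\gamma_{ij}$, so that $\xi_i$ obeys $\dot\xi_i=Q_t\sum_{j\neq i}\hat\gamma_{ij}(\xi_j-\xi_i)$ with $Q_t\in\overline\Q_n$ for all $t$ and $\widehat\Gamma:=[\hat\gamma_{ij}]$ a connected interconnection having the same left null vector $r$.

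The key step is to verify that uniform observability of $(C,\,A)$ translates into persistent excitation of $t\mapsto Q_t$. Since $W_{\rm o}(t,\,t+T)=\int_t^{t+T}\Phi_A^T(\tau,\,0)C^T(\tau)C(\tau)\Phi_A(\tau,\,0)\,d\tau = \bar a\bar c\int_t^{t+T}Q_\tau\,d\tau$, the hypothesis $\sigma_{\rm min}(W_{\rm o}(t,\,t+T))\geq\varepsilon$ for all $t$ gives $\sigma_{\rm min}\big(\int_t^{t+T}Q_\tau\,d\tau\big)\geq\varepsilon/(\bar a\bar c)$ for all $t$, which is exactly \eqref{eqn:uniform}. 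Hence $Q$ is persistently exciting. Now invoke Theorem~\ref{thm:suffexp} on the $\xi$-array: solutions $\xi_i(\cdot)$ exponentially synchronize to $\bar\xi(t)\equiv(r^T\otimes I_n)[x_1(0)^T\ \cdots\ x_p(0)^T]^T$.

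Finally I would pull the conclusion back to the $x_i$'s via $x_i(t)=\Phi_A(t,\,0)\xi_i(t)$. Because $|\Phi_A(t,\,0)|\leq\bar a$ uniformly in $t$, we get $|x_i(t)-\bar x(t)| = |\Phi_A(t,\,0)(\xi_i(t)-\bar\xi(t))|\leq\bar a\,|\xi_i(t)-\bar\xi(t)|$, with $\bar x(t):=(r^T\otimes\Phi_A(t,\,0))[x_1(0)^T\ \cdots\ x_p(0)^T]^T$; the exponential bound $\max_i|\xi_i(t)-\bar\xi(t)|\leq c e^{-\alpha t}$ is therefore preserved (with constant $\bar a c$) for $\max_i|x_i(t)-\bar x(t)|$. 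This yields exponential synchronization of the $x_i(\cdot)$ to the claimed $\bar x(\cdot)$, establishing synchronizability under the bounded feedback $L$.

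I do not anticipate a genuine obstacle here: the continuous-time machinery is already in place, and the only mildly delicate point is bookkeeping the scaling constants $\bar a,\bar c$ so that $Q_t$ lands in $\overline\Q_n$ while $\widehat\Gamma$ remains a bona fide connected interconnection with left null vector $r$ — both facts were already checked in the proof of Theorem~\ref{thm:ACmain}, so they carry over unchanged. The substantive content is entirely in Theorem~\ref{thm:suffexp} (itself a consequence of Theorem~\ref{thm:main}); this theorem is just its interpretation through the observability grammian.
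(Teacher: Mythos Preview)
Your approach is correct and is precisely what the paper does: its proof of Theorem~\ref{thm:ACexp} says only that the argument is analogous to that of Theorem~\ref{thm:ACmain}, with Theorem~\ref{thm:suffexp} replacing Theorem~\ref{thm:suff}. One small slip to fix: by definition $W_{\rm o}(t,\,t+T)$ involves $\Phi_A(\tau,\,t)$, not $\Phi_A(\tau,\,0)$, so in fact $W_{\rm o}(t,\,t+T)=\Phi_A^{T}(0,\,t)\big[\bar a\bar c\int_t^{t+T}Q_\tau\,d\tau\big]\Phi_A(0,\,t)$; since $|\Phi_A(0,\,t)|\leq\bar a$ and $|\Phi_A(t,\,0)|\leq\bar a$, you still get $\sigma_{\rm min}\big(\int_t^{t+T}Q_\tau\,d\tau\big)\geq\varepsilon/(\bar a^{3}\bar c)$, and persistent excitation of $Q$ follows with that adjusted constant.
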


\begin{proof}
The demonstration flows in a way that is analogous to that of
Theorem~\ref{thm:ACmain}. This time, however, the result follows from
Theorem~\ref{thm:suffexp}.
\end{proof}

\section{Discrete-time results}\label{sec:dt}
Our study on synchronization has hitherto been solely in continuous
time. However, it is possible to extend the analysis to systems in
discrete time without much difficulty. In fact, most of the
continuous-time results have discrete-time counterparts under
assumptions and definitions that are analogous to the ones we used for
continuous-time systems. In this section, therefore, we focus on
discrete-time time-varying linear systems and investigate the
correlation between synchronizability and observability grammian in
discrete time.

For a given interconnection $\Lambda=[\lambda_{ij}]\in\Real^{p\times p}$, consider
the array of $p$ discrete-time linear systems, for $k\in\Natural$,
\begin{subeqnarray}\label{eqn:arraydt}
x_{i}^{+}&=&A(k)x_{i}+u_{i}\\
y_{i}&=&C(k)x_{i}\\
z_{i}&=&\sum_{j\neq i}\lambda_{ij}(y_{j}-y_{i})
\end{subeqnarray}  
where $x_{i}\in\Real^{n}$ is the state, $x_{i}^{+}$ is the state at
the next time instant, $u_{i}\in\Real^{n}$ is the input,
$y_{i}\in\Real^{m}$ is the output, and $z_{i}\in\Real^{m}$ is the
coupling of the $i$th system for $i=1,\,2,\,\ldots,\,p$. For each
$k\in\Natural$, we have $A(k)\in\Real^{n\times n}$ and
$C(k)\in\Real^{m\times n}$. The solution of $i$th system at time
$k\in\Natural$ is denoted by $x_{i}(k)$. We denote by
$\Phi_{A}(\cdot,\,\cdot)$ the state transition matrix for $A$,
i.e. for $k>k_{0}$
\begin{eqnarray*}
\Phi_{A}(k,\,k_{0})=A(k-1)A(k-2)\cdots A(k_{0})
\end{eqnarray*} 
with $\Phi_{A}(k_{0},\,k_{0})=I_{n}$. We will let
$\Phi_{A}(k_{0},\,k)=\Phi^{-1}_{A}(k,\,k_{0})$ whenever the inverse
exists.  Observability grammian for pair $(C,\,A)$ is given by
\begin{eqnarray*}
W_{\rm o}(k_{0},\,k):=\sum_{\ell=k_{0}}^{k-1}\Phi_{A}^{T}(\ell,\,k_{0})C^{T}(\ell)C(\ell)
\Phi_{A}(\ell,\,k_{0})
\end{eqnarray*}
for $k,\,k_{0}\in\Natural$. Below we provide the discrete-time
versions of Definition~\ref{def:sync} and Assumption~\ref{assume:AC}.

\begin{definition}[Synchronizability]
Given functions $A:\Natural\to\Real^{n\times n}$ and
$C:\Natural\to\Real^{m\times n}$; pair $(C,\,A)$ is said to be {\em
synchronizable (with respect to $\G_{>0}$)} if there exists a bounded,
time-varying linear feedback law $L:\Natural\to\Real^{n\times m}$ such
that for each $\Lambda\in\G_{>0}$, solutions $x_{i}(\cdot)$ of
array~\eqref{eqn:arraydt} with $u_{i}=L(k)z_{i}$ synchronize for all
initial conditions.
\end{definition}

\begin{assumption}[Boundedness]\label{assume:ACdt}
For $A:\Natural\to\Real^{n\times n}$ and $C:\Natural\to\Real^{m\times n}$
following hold.
\begin{itemize}
\item[{(a)}] For each $k\in\Natural$, $A^{-1}(k)$ exists. 
There exists $\bar{a}\geq 1$ such that 
$|\Phi_{A}(k_{1},\,k_{2})|\leq \bar{a}$ for all $k_{1},\,k_{2}\in\Natural$.
\item[{(b)}] There exists $\bar{c}\geq 1$ such that $|C(k)|\leq \bar{c}$ 
for all $k\in\Natural$.
\end{itemize}
\end{assumption}

\begin{remark}
When $A$ and $C$ are constant matrices,
Assumption~\ref{assume:ACdt}(b) comes for free; and
Assumption~\ref{assume:ACdt}(a) becomes equivalent to that all
eigenvalues of matrix $A$ are with unity magnitude and none of them
belongs to a Jordan block with size two or greater.
\end{remark}

\subsection{Synchronization under bounded SPSD matrix} 

This subsection will emulate Section~\ref{sec:spsd}, where we studied
the stability and synchronization properties of
array~\eqref{eqn:system}. For a given interconnection
$\Lambda=[\lambda_{ij}]\in\Real^{p\times p}$, let an array of $p$
systems be
\begin{eqnarray}\label{eqn:systemdt}
x_{i}^{+}=x_{i}+Q_{k}\sum_{j\neq i}\lambda_{ij}(x_{j}-x_{i})
\end{eqnarray}
where $x_{i}\in\Real^{n}$ and $Q_{k}\in\overline{\Q}_{n}$ for all
$k\in\Natural$. We consider \eqref{eqn:systemdt} as the discrete-time
analogue of \eqref{eqn:system}. Let us stack individual vectors
$x_{i}$ into $\ex=[x_{1}^{T}\ x_{2}^{T}\ \ldots\ x_{p}^{T}]^{T}$. Then
we obtain from \eqref{eqn:systemdt} 
\begin{eqnarray}\label{eqn:systemkrondt}
\ex^{+}=(I_{np}+(\Lambda-I_{p})\otimes Q_{k})\ex
\end{eqnarray} 
which makes the analogue of system~\eqref{eqn:systemkron}.

\begin{remark}
Recall that to establish stability of (continuous-time)
array~\eqref{eqn:system} it sufficed that $Q_{t}$ is SPSD for each
$t$. (See Theorem~\ref{thm:stability}.) That is, boundedness was not
required. Later, when we established synchronization in
Theorem~\ref{thm:suff}, we needed solely that $Q:\Real\to\Q_{n}$ is
bounded. (See Remark~\ref{rem:bnd}.)  The story has to be a little bit
different in discrete-time.  Note that in
\eqref{eqn:systemdt} we stipulated that $Q_{k}$ (the discrete-time 
counterpart of $Q_{t}$) be in ${\overline\Q}_{n}$.  Even to be able to
establish stability, let alone synchronization, we will need
$|Q_{k}|\leq 1$, a more restrictive condition than
boundedness. Clearly, this has to do with the fact that for
discrete-time systems the magnitude of the righthand side is important
for stability; whereas in continuous time, what matters (for
stability) is only the direction of the righthand side.
\end{remark} 

\begin{lemma}
Let interconnection $\Lambda\in\Real^{p\times p}$ be connected and
$r\in\Real^{p}$ satisfy \eqref{eqn:rdt}. Then, there exists symmetric
positive definite matrix $\Omega\in\Real^{p\times p}$ such that
\begin{eqnarray}\label{eqn:lyapdt}
(\Lambda-\one{r^{T}})^{T}\Omega(\Lambda-\one{r^{T}})-\Omega=-I_{p}\,.
\end{eqnarray}
\end{lemma}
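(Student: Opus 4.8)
The plan is to mirror the continuous-time argument of Lemma~\ref{lem:omega}, replacing the Lyapunov exponential-stability reasoning with its discrete-time analogue. The key fact I need is that the matrix $M:=\Lambda-\one r^{T}$ is Schur (i.e. all its eigenvalues lie strictly inside the unit circle); once that is established, the existence of a symmetric positive definite $\Omega$ solving the discrete Lyapunov equation \eqref{eqn:lyapdt} is a standard result.

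\begin{proof}
Consider $M:=\Lambda-\one{r}^{T}$. Using $r^{T}\Lambda=r^{T}$, $\Lambda\one=\one$, and $r^{T}\one=1$ from \eqref{eqn:rdt}, one checks by induction that $M^{k}=\Lambda^{k}-\one{r^{T}}$ for all $k\in\Natural_{\geq 1}$: indeed, $M^{k+1}=(\Lambda^{k}-\one r^{T})(\Lambda-\one r^{T})=\Lambda^{k+1}-\Lambda^{k}\one r^{T}-\one r^{T}\Lambda+\one r^{T}\one r^{T}=\Lambda^{k+1}-\one r^{T}-\one r^{T}+\one r^{T}=\Lambda^{k+1}-\one r^{T}$. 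Since $\Lambda$ is connected, $\lim_{k\to\infty}\Lambda^{k}=\one{r^{T}}$, hence $\lim_{k\to\infty}M^{k}=0$. Therefore the spectral radius of $M$ is strictly less than one, i.e. $M$ is Schur. Consequently the discrete Lyapunov equation $M^{T}\Omega M-\Omega=-I_{p}$ admits a (unique) symmetric positive definite solution $\Omega$, namely $\Omega=\sum_{k=0}^{\infty}(M^{T})^{k}M^{k}$, which converges because $M$ is Schur and which is positive definite since the $k=0$ term is $I_{p}$. This is precisely \eqref{eqn:lyapdt}.
\end{proof}

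The only step requiring care is the identity $M^{k}=\Lambda^{k}-\one r^{T}$, which is the discrete-time counterpart of the expansion of $e^{(\Gamma-\one r^{T})t}$ in Lemma~\ref{lem:omega}; it hinges on the cancellations coming from $r^{T}\Lambda=r^{T}$ and $\Lambda\one=\one$. After that, the passage from ``$M$ is Schur'' to ``$\Omega$ exists, symmetric positive definite'' is a textbook fact about discrete Lyapunov equations and needs no further elaboration. I do not expect any genuine obstacle here — the discrete case is, if anything, slightly cleaner than the continuous one since the geometric-series representation of $\Omega$ makes positive definiteness immediate.
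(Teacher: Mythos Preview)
Your proof is correct and follows essentially the same approach as the paper: both establish the identity $(\Lambda-\one r^{T})^{k}=\Lambda^{k}-\one r^{T}$, use $\lim_{k\to\infty}\Lambda^{k}=\one r^{T}$ to conclude that $\Lambda-\one r^{T}$ is Schur, and then invoke the standard discrete Lyapunov result. You simply supply a bit more detail (the induction step and the explicit series for $\Omega$) than the paper does.
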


\begin{proof}
We first observe that $(\Lambda-\one r^{T})^{k}=\Lambda^{k}-\one
r^{T}$.  Then we can write $\lim_{k\to\infty}\Lambda^{k}-1r^{T}=0$,
which implies that matrix $[\Lambda-\one r^{T}]$ is Schur, i.e. all of
its eigenvalues are strictly within unit circle. Therefore,
discrete-time Lyapunov equation \eqref{eqn:lyapdt} admits a symmetric
positive definite solution $\Omega$.
\end{proof}

\begin{lemma}\label{lem:Vplus}
Let interconnection $\Lambda\in\Real^{p\times p}$ be connected,
$r\in\Real^{p}$ satisfy \eqref{eqn:rdt}, and symmetric positive
definite matrix $\Omega\in\Real^{p\times p}$ satisfy
\eqref{eqn:lyapdt}. Define $V:\Real^{np}\to\Real_{\geq 0}$ as
$V(\ex):=\ex^{T}(\Omega\otimes I_{n})\ex$.  Then, for all
$Q:\Natural\to\overline{\Q}_{n}$ and all $k\in\Natural$, solution of
system~\eqref{eqn:systemkrondt} satisfies
\begin{eqnarray}\label{eqn:projection}
V(\ex(k+1)-\bar{\ex})-V(\ex(k)-\bar{\ex})
\leq -(\ex(k)-\bar{\ex})^{T}(I_{p}\otimes Q_{k}^{2})(\ex(k)-\bar{\ex})
\end{eqnarray}
where $\bar{\ex}:=(\one{r^{T}}\otimes I_{n})\ex(0)$.
\end{lemma}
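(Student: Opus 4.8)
The plan is to mirror the continuous-time computation in Lemma~\ref{lem:Vdot}, adapting it to the discrete-time setting. First I would observe that the ``consensus direction'' is invariant: applying $(\one r^{T}\otimes I_{n})$ to the right-hand side of \eqref{eqn:systemkrondt} and using $r^{T}(\Lambda-I_{p})=r^{T}\Lambda-r^{T}=0$ (from \eqref{eqn:rdt}) shows that $(\one r^{T}\otimes I_{n})\ex(k+1)=(\one r^{T}\otimes I_{n})\ex(k)$, hence $(\one r^{T}\otimes I_{n})\ex(k)=\bar{\ex}$ for all $k$. Consequently, writing $\xi(k):=\ex(k)-\bar{\ex}$, one has $(\one r^{T}\otimes I_{n})\xi(k)=0$, and since $(I_{np}+(\Lambda-I_{p})\otimes Q_{k})$ fixes $\bar\ex$ (again by $r^{T}(\Lambda-I_{p})=0$ applied on the left, or directly since $(\Lambda-I_p)\one=0$), we get the reduced recursion
\begin{eqnarray*}
\xi(k+1)=\bigl(I_{np}+(\Lambda-\one r^{T}-I_{p})\otimes Q_{k}\bigr)\xi(k)\,.
\end{eqnarray*}

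Next I would expand $V(\xi(k+1))=\xi(k+1)^{T}(\Omega\otimes I_{n})\xi(k+1)$ using this recursion. Abbreviating $M:=\Lambda-\one r^{T}$, the cross terms produce $M^{T}\Omega+\Omega M$ tensored with $Q_{k}$, and the quadratic term produces $M^{T}\Omega M\otimes Q_{k}^{2}$; the leading term is $\Omega\otimes I_{n}$. To make the Lyapunov equation \eqref{eqn:lyapdt} appear, I would add and subtract $\Omega\otimes Q_k^2$, rewriting the collected middle terms as $(M^{T}\Omega M-\Omega)\otimes Q_{k}^{2}+(\text{something})$. Concretely, note the identity $M^{T}\Omega M - \Omega = (M^T\Omega M - \Omega)\otimes$ appears cleanly; the key algebraic step is to recognize that
\begin{eqnarray*}
(I+M^{T}\otimes Q_k)(\Omega\otimes I)(I+M\otimes Q_k)-\Omega\otimes I
= (M^{T}\Omega+\Omega M)\otimes Q_k + (M^{T}\Omega M)\otimes Q_k^{2}\,,
\end{eqnarray*}
and then to complete this into $(M^{T}\Omega M-\Omega)\otimes Q_k^{2}$ plus a manifestly non-positive remainder. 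Here is where $|Q_k|\le 1$ enters: since $Q_{k}\in\overline{\Q}_{n}$ we have $0\le Q_{k}\le I_{n}$, hence $Q_{k}^{2}\le Q_{k}$, so any term of the form $(\text{SPSD}_{p})\otimes(Q_{k}-Q_{k}^{2})$ is SPSD and can be bounded; more importantly the sign-indefinite $(M^{T}\Omega+\Omega M)\otimes Q_k$ contribution is exactly cancelled once we substitute \eqref{eqn:lyapdt} in the form $M^{T}\Omega M=\Omega-I_{p}$, leaving
\begin{eqnarray*}
V(\xi(k+1))-V(\xi(k))=\xi(k)^{T}\bigl[(M^{T}\Omega+\Omega M+M^{T}\Omega M-M^{T}\Omega M)\otimes\cdots\bigr]\xi(k)\le -\xi(k)^{T}(I_{p}\otimes Q_{k}^{2})\xi(k),
\end{eqnarray*}
once the bookkeeping is done carefully and the negative semidefinite leftover (coming from $(\Omega - M^T\Omega M = I_p)$ against $Q_k^2$ versus the cross term against $Q_k$) is discarded.

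The main obstacle, and the step deserving the most care, is the sign-tracking in the expansion: unlike the continuous-time case where the Lyapunov identity $M^{T}\Omega+\Omega M=-I_{p}$ kills the first-order term outright, in discrete time the first-order term in $Q_k$ and the second-order term in $Q_k^2$ must be combined, and one must use \emph{both} $M^{T}\Omega M-\Omega=-I_{p}$ \emph{and} the inequality $Q_{k}^{2}\le Q_{k}$ (equivalently $Q_k - Q_k^2 \ge 0$, valid precisely because $Q_k\in\overline\Q_n$) to land on the clean bound \eqref{eqn:projection}. I would therefore set $P:=M^{T}\Omega M$, write the difference as $\xi^{T}\bigl[(P-\Omega)\otimes Q_k^2 + (M^T\Omega+\Omega M)\otimes Q_k\bigr]\xi$, note $M^T\Omega+\Omega M = (P-\Omega)+ \Omega - M^T\Omega - \Omega M + \ldots$ — actually more directly, since $M^T\Omega + \Omega M + M^T\Omega M - M^T\Omega M$ and $P - \Omega = -I_p$, observe the whole bracket equals $-I_p\otimes Q_k^2 + \bigl[(M^T\Omega + \Omega M) \otimes Q_k - (M^T\Omega M)\otimes(Q_k - Q_k^2)\bigr]$ and the remaining bracket is $(M^T\Omega + \Omega M - M^T\Omega M)\otimes Q_k + M^T\Omega M\otimes Q_k^2$ which one checks is $\le 0$ using that $M^T\Omega M \ge 0$, $Q_k^2 \le Q_k$, and the Lyapunov relation to control $M^T\Omega+\Omega M - M^T\Omega M = -I_p + 2(\Omega - \text{stuff})$; I would verify this final negativity by the completed-square form $(I+M^T\otimes Q_k)(\Omega\otimes I)(I+M\otimes Q_k) \le \Omega\otimes I - I_p\otimes Q_k^2$ directly, reducing everything to checking $\Omega - (I+ M^T Q)\,\Omega\,(I+MQ) \ge Q^2$ for each relevant scalar instance of $Q\in[0,1]$, which follows from \eqref{eqn:lyapdt} and $Q^2\le Q$. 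Once that scalar/spectral reduction is in hand, the Kronecker structure makes the full inequality immediate.
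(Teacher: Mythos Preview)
Your setup is right --- the invariance of $(\one r^{T}\otimes I_{n})\ex(k)$ and the reduced recursion for $\xi(k)=\ex(k)-\bar\ex$ are exactly what the paper does --- but there is a bookkeeping slip and, behind it, a genuine missing ingredient.

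\textbf{The slip.} With $M:=\Lambda-\one r^{T}$ the recursion you derived is $\xi^{+}=\bigl(I_{np}+(M-I_{p})\otimes Q_{k}\bigr)\xi$, so the expansion of $V(\xi^{+})-V(\xi)$ produces $(M-I_{p})^{T}\Omega+\Omega(M-I_{p})$ against $Q_{k}$ and $(M-I_{p})^{T}\Omega(M-I_{p})$ against $Q_{k}^{2}$, not $M^{T}\Omega+\Omega M$ and $M^{T}\Omega M$ as you wrote. Writing $N:=M-I_{p}$, the correct identity is
\[
V(\xi^{+})-V(\xi)=\xi^{T}\!\left[(N^{T}\Omega+\Omega N)\otimes Q_{k}+(N^{T}\Omega N)\otimes Q_{k}^{2}\right]\!\xi .
\]
Using $M^{T}\Omega M=\Omega-I_{p}$ one checks $N^{T}\Omega N=-I_{p}-(N^{T}\Omega+\Omega N)$, so the bracket collapses to
\[
(N^{T}\Omega+\Omega N)\otimes(Q_{k}-Q_{k}^{2})\;-\;I_{p}\otimes Q_{k}^{2}.
\]
This is the paper's decomposition.

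\textbf{The gap.} To conclude \eqref{eqn:projection} you now need $(N^{T}\Omega+\Omega N)\otimes(Q_{k}-Q_{k}^{2})\le 0$, and since $Q_{k}-Q_{k}^{2}\ge 0$ can be full rank this forces $N^{T}\Omega+\Omega N\le 0$. Nothing in your write-up establishes this; you repeatedly suggest the cross term is ``exactly cancelled'' by the discrete Lyapunov equation, but \eqref{eqn:lyapdt} controls $M^{T}\Omega M-\Omega$ only and says nothing directly about $N^{T}\Omega+\Omega N=M^{T}\Omega+\Omega M-2\Omega$. Your final ``scalar reduction'' (diagonalize $Q_{k}$, reduce to $q\in[0,1]$) is legitimate, but it leads precisely to $(q-q^{2})(N^{T}\Omega+\Omega N)\le 0$, i.e.\ the same unproven inequality. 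The paper fills this gap with a short convexity argument: since $M^{T}\Omega M<\Omega$, the ellipsoid $\{x:x^{T}\Omega x\le c\}$ is mapped into its own interior by $M$, hence the chord direction $Nx=(M-I_{p})x$ at any boundary point $x$ is inward, giving $x^{T}(N^{T}\Omega+\Omega N)x<0$. Without this (or an equivalent), the inequality \eqref{eqn:projection} does not follow.
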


\begin{proof}
Observe that $(\one{r^{T}}\otimes I_{n})\ex(k+1)=\ex(k)$ whence
$(\one{r^{T}}\otimes I_{n})\ex(k)=\bar\ex$ for all $k\in\Natural$.
Let $\xi:=\ex-\bar\ex$ and $\Lambda_{\circ}:=\Lambda-\one{r^{T}}$. Then we have
$\xi^{+}=(I_{np}+(\Lambda_{\circ}-I_{p})\otimes
Q_{k})\xi$. We can write
\begin{eqnarray*}
V(\xi^{+})-V(\xi)
&=&\xi^{T}\bigg(\big(I_{np}+(\Lambda_{\circ}-I_{p})\otimes Q_{k}\big)^{T}(\Omega\otimes I_{n})
\big(I_{np}+(\Lambda_{\circ}-I_{p})\otimes Q_{k}\big)-\Omega\otimes I_{n}\bigg)\xi\\
&=&\xi^{T}\bigg(\big((\Lambda_{\circ}-I_{p})^{T}\Omega+\Omega(\Lambda_{\circ}-I_{p})\big)
\otimes(Q_{k}-Q_{k}^{2})
+(\Lambda_{\circ}^{T}\Omega\Lambda_{\circ}-\Omega)\otimes Q_{k}^{2}\bigg)\xi\,.
\end{eqnarray*}
Note that $Q_{k}-Q_{k}^{2}\geq 0$ since $|Q_{k}|\leq 1$ and that
$(\Lambda_{\circ}-I_{p})^{T}\Omega+\Omega(\Lambda_{\circ}-I_{p})<0$
since $\Lambda_{\circ}^{T}\Omega\Lambda_{\circ}-\Omega<0$ (this is
almost immediate when we recall that the sublevel sets of quadratic
Lyapunov functions are convex surfaces.) Therefore
\begin{eqnarray*}
V(\xi^{+})-V(\xi)\leq -\xi^{T}(I_{p}\otimes Q_{k}^{2})\xi\,.
\end{eqnarray*}
Hence the result.
\end{proof}

\begin{remark}
Were $Q_{k}$ a projection matrix, then 
inequality \eqref{eqn:projection} could be replaced by the below equality
\begin{eqnarray*}
V(\ex(k+1)-\bar{\ex})-V(\ex(k)-\bar{\ex})
= -(\ex(k)-\bar{\ex})^{T}(I_{p}\otimes Q_{k})(\ex(k)-\bar{\ex})\,.
\end{eqnarray*}   
\end{remark}

\begin{theorem}\label{thm:stabilitydt}
Given interconnection $\Lambda\in\Real^{p\times p}$, there exists
$\alpha>0$ such that for all $Q:\Natural\to\overline\Q_{n}$, solution of
system~\eqref{eqn:systemkrondt} satisfies
\begin{eqnarray*}
|\ex(k)|\leq\alpha|\ex(0)|
\end{eqnarray*}
for all $k\in\Natural$. 
\end{theorem}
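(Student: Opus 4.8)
The plan is to mirror the proof of Theorem~\ref{thm:stability}, replacing the continuous-time decrease of Lemma~\ref{lem:Vdot} by its discrete-time counterpart Lemma~\ref{lem:Vplus}. First I would reduce to two elementary cases by block-diagonalizing the interconnection. Exactly as in the continuous-time argument, $\Lambda$ is similar, say $S^{-1}\Lambda S=\diag(\Lambda_1,\dots,\Lambda_q)$, to a block diagonal matrix whose block $\Lambda_i\in\Real^{p_i\times p_i}$ is a connected (discrete-time) interconnection when $p_i\ge 2$ and equals $[1]$ when $p_i=1$; here $q$ is the multiplicity of the eigenvalue $1$ of $\Lambda$. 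Since $(S\otimes I_n)^{-1}\bigl(I_{np}+(\Lambda-I_p)\otimes Q_k\bigr)(S\otimes I_n)=I_{np}+\bigl(\diag(\Lambda_1,\dots,\Lambda_q)-I_p\bigr)\otimes Q_k$, which is block diagonal with blocks $I_{np_i}+(\Lambda_i-I_{p_i})\otimes Q_k$, a bound $|\ex(k)|\le\alpha_i|\ex(0)|$ for each block yields $|\ex(k)|\le |S|\,|S^{-1}|\,(\max_i\alpha_i)\,|\ex(0)|$ for $\Lambda$, with the constant independent of $Q$ and $k$. So it suffices to handle: (i) $\Lambda=[1]$, i.e.\ $p=1$, where $(\Lambda-I_1)\otimes Q_k=0$ and hence $\ex(k)=\ex(0)$; and (ii) $\Lambda$ connected.

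For case (ii), let $r\in\Real^p$ satisfy \eqref{eqn:rdt}, let symmetric positive definite $\Omega\in\Real^{p\times p}$ satisfy \eqref{eqn:lyapdt}, and put $V(\ex):=\ex^T(\Omega\otimes I_n)\ex$ and $\bar\ex:=(\one r^T\otimes I_n)\ex(0)$. Since $|Q_k|\le 1$ implies $Q_k^2\ge 0$, Lemma~\ref{lem:Vplus} shows $k\mapsto V(\ex(k)-\bar\ex)$ is nonincreasing, so $V(\ex(k)-\bar\ex)\le V(\ex(0)-\bar\ex)$ for all $k$. Because $r$ has no negative entry and $r^T\one=1$, every entry of $r$ lies in $[0,1]$, so $|r|\le r^T\one=1$, whence $|\bar\ex|\le|\one r^T|\,|\ex(0)|\le|\one|\,|\ex(0)|=\sqrt p\,|\ex(0)|$. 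Then, just as in Theorem~\ref{thm:stability}, $|\ex(k)|\le|\ex(k)-\bar\ex|+|\bar\ex|\le\sigma_{\rm min}(\Omega)^{-1/2}\sqrt{V(\ex(k)-\bar\ex)}+|\bar\ex|\le\sigma_{\rm min}(\Omega)^{-1/2}\sqrt{V(\ex(0)-\bar\ex)}+|\bar\ex|\le\sqrt{\sigma_{\rm max}(\Omega)/\sigma_{\rm min}(\Omega)}\,|\ex(0)-\bar\ex|+|\bar\ex|\le\bigl(\sqrt{\sigma_{\rm max}(\Omega)/\sigma_{\rm min}(\Omega)}\,(1+\sqrt p)+\sqrt p\bigr)|\ex(0)|$, a constant depending only on $\Omega$ (hence on $\Lambda$) and on $p$, not on $Q$.

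I do not expect a genuine obstacle; the only points needing a little care are that the block-diagonalization in the first step contributes merely the fixed factor $|S|\,|S^{-1}|$, independent of $Q$ and $k$, and that Lemma~\ref{lem:Vplus} is invoked with the correct sign: it is the nonnegative term $(\ex(k)-\bar\ex)^T(I_p\otimes Q_k^2)(\ex(k)-\bar\ex)$, and not a bare $Q_k$, that forces $V$ to be nonincreasing. This is precisely why $|Q_k|\le 1$ rather than mere boundedness of $Q$ is needed in discrete time, as noted in the remark preceding Lemma~\ref{lem:Vplus}. Everything else is the routine transcription of the continuous-time proof.
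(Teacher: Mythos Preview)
Your proposal is correct and follows essentially the same approach as the paper. The paper's own proof is a one-liner stating that the demonstration uses Lemma~\ref{lem:Vplus} and flows similar to that of Theorem~\ref{thm:stability}; you have simply written out those details, including the block-diagonal reduction and the chain of inequalities, with the correct discrete-time substitutions ($\Lambda_i=[1]$ in place of $\Gamma_i=0$, and $\sqrt{p}$ for $|\one|$).
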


\begin{proof}
Demonstration uses Lemma~\ref{lem:Vplus} and flows similar to that of
Theorem~\ref{thm:stability}.
\end{proof}

\begin{fact}\label{fact:three}
For map $Q:\Natural\to\overline{Q}_{n}$, real number $\varepsilon>0$,
and integer $N\geq 1$ we have
\begin{eqnarray*}
\sigma_{\rm min}\left(\sum_{k=0}^{N-1}Q_{k}\right)\geq\varepsilon
\implies \sigma_{\rm min}\left(\sum_{k=0}^{N-1}Q_{k}^{2}\right)
\geq\frac{\varepsilon^{2}}{N^{2}n^2}\,.
\end{eqnarray*}
\end{fact}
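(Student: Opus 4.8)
The plan is to reduce everything to a statement about a single unit vector. Both $\sum_{k=0}^{N-1}Q_{k}$ and $\sum_{k=0}^{N-1}Q_{k}^{2}$ are symmetric positive semi-definite, so their smallest singular values coincide with their smallest eigenvalues; hence $\sigma_{\rm min}(\sum_{k}Q_{k}^{2})=\min_{|v|=1}v^{T}(\sum_{k}Q_{k}^{2})v=\min_{|v|=1}\sum_{k}|Q_{k}v|^{2}$, and the hypothesis $\sigma_{\rm min}(\sum_{k}Q_{k})\geq\varepsilon$ says precisely that $v^{T}(\sum_{k}Q_{k})v\geq\varepsilon$ for every unit vector $v$. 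So it suffices to fix a unit vector $v$ and show $\sum_{k}|Q_{k}v|^{2}\geq\varepsilon^{2}/(N^{2}n^{2})$; in fact the argument below delivers the stronger bound $\varepsilon^{2}/N$.

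First I would record the pointwise inequality $0\leq v^{T}Q_{k}v\leq|Q_{k}v|$: the left-hand side is just $Q_{k}\geq 0$, and the right-hand side is Cauchy--Schwarz, $v^{T}Q_{k}v=\langle v,Q_{k}v\rangle\leq|v|\,|Q_{k}v|=|Q_{k}v|$ since $|v|=1$. Summing over $k$ and invoking the hypothesis gives $\sum_{k}|Q_{k}v|\geq\sum_{k}v^{T}Q_{k}v=v^{T}(\sum_{k}Q_{k})v\geq\varepsilon$. Then a second application of Cauchy--Schwarz (equivalently, the power-mean inequality) to the $N$ nonnegative numbers $|Q_{0}v|,\dots,|Q_{N-1}v|$ yields $(\sum_{k}|Q_{k}v|)^{2}\leq N\sum_{k}|Q_{k}v|^{2}$, so $\sum_{k}|Q_{k}v|^{2}\geq\frac{1}{N}(\sum_{k}|Q_{k}v|)^{2}\geq\varepsilon^{2}/N$. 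Taking the minimum over unit $v$ and using $N,n\geq 1$ completes the proof.

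I do not expect a genuine obstacle here; this is essentially a two-line estimate. The only points that need a little care are the bookkeeping that a symmetric positive semi-definite matrix has $\sigma_{\rm min}$ equal to its least eigenvalue (so that the Rayleigh-quotient characterization is legitimate), and the observation that the bound $|Q_{k}|\leq 1$ distinguishing $\overline{\Q}_{n}$ from $\Q_{n}$ is not actually needed for this particular estimate (it is of course used elsewhere in the discrete-time development). If one wanted a proof whose natural constant carries the factor $n^{2}$, one could route through traces instead --- $\mathrm{tr}(Q_{k})\leq n|Q_{k}|\leq n$ --- but the vectorwise argument above is both shorter and sharper.
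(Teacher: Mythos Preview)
Your argument is correct and is in fact cleaner and sharper than the paper's. The paper proceeds by two pigeonhole steps followed by diagonalization: from $\sum_{k}v^{T}Q_{k}v\geq\varepsilon$ it first extracts a single index $k^{*}$ with $v^{T}Q_{k^{*}}v\geq\varepsilon/N$, then diagonalizes $Q_{k^{*}}=R^{T}DR$, sets $w=Rv$, and extracts a single coordinate $i^{*}$ with $d_{i^{*}}w_{i^{*}}^{2}\geq\varepsilon/(Nn)$; squaring and using $w_{i^{*}}^{2}\leq 1$ then gives $v^{T}Q_{k^{*}}^{2}v\geq\varepsilon^{2}/(N^{2}n^{2})$. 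Each pigeonhole step costs a factor in the constant, which is why the paper's bound depends on the ambient dimension~$n$. Your route---$v^{T}Q_{k}v\leq|Q_{k}v|$ by Cauchy--Schwarz, then $(\sum_{k}|Q_{k}v|)^{2}\leq N\sum_{k}|Q_{k}v|^{2}$---avoids both the diagonalization and the second pigeonhole, yielding the dimension-free and strictly stronger bound $\sigma_{\rm min}(\sum_{k}Q_{k}^{2})\geq\varepsilon^{2}/N$. Your side remark that the norm constraint $|Q_{k}|\leq 1$ is not actually invoked in this particular estimate is also accurate; neither proof uses it here.
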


\begin{proof}
Let us be given any $v\in\Real^{n}$ with $v^{T}v=1$. Let $\sigma_{\rm
min}\left(\sum_{k=0}^{N-1}Q_{k}\right)\geq\varepsilon>0$. We can write
\begin{eqnarray*}
v^{T}Q_{0}v+v^{T}Q_{1}v+\ldots+v^{T}Q_{N-1}v\geq \varepsilon
\end{eqnarray*}
which implies that there exists $k^{*}\in\{0,\,1,\,\ldots,\,N-1\}$
such that $v^{T}Q_{k^{*}}v\geq\varepsilon/N$. Since $Q_{k^{*}}$ is an
SPSD matrix there exists an orthogonal matrix $R\in\Real^{n\times n}$
and a diagonal matrix $D\in\Real^{n\times n}$ with entries
$d_{i}\in[0,\,1]$ for $i=1,\,2,\,\ldots,\,n$ such that
$Q_{k^{*}}=R^{T}DR$. Also note that $Q^{2}_{k^{*}}=R^{T}D^{2}R$. Let
$[w_{1}\ w_{2}\ \ldots\ w_{n}]^{T}=w:=Rv$. Note that $w^{T}w=1$ for
$R$ is orthogonal. We can therefore write
\begin{eqnarray*}
\sum_{i=1}^{n}d_{i}w_{i}^{2}\geq\frac{\varepsilon}{N}
\end{eqnarray*}
which implies that there exists $i^{*}\in\{0,\,1,\,\ldots,\,n\}$ such
that $d_{i^{*}}w_{i^{*}}^{2}\geq\varepsilon/(Nn)$. Since
$w_{i^{*}}^{2}\leq 1$, we can write
\begin{eqnarray*}
\frac{\varepsilon^{2}}{N^{2}n^{2}}
&\leq& d_{i^{*}}^{2}w_{i^{*}}^{4}\\
&\leq& d_{i^{*}}^{2}w_{i^{*}}^{2}\\
&\leq& w^{T}D^{2}w\\
&=& v^{T}Q^{2}_{k^{*}}v\\
&\leq& v^{T}\left(\sum_{k=0}^{N-1}Q^{2}_{k}\right)v
\end{eqnarray*}
whence the result follows, for $v$ was arbitrary.
\end{proof}

\begin{theorem}\label{thm:maindt}
Let $\varepsilon>0$ be a real number and $N\geq 1$ an integer. Define
\begin{eqnarray}\label{eqn:deltadt}
\delta(\varepsilon,\,N):=\frac{\varepsilon^{4}}{16N^{8}n^{4}}\,.
\end{eqnarray}
Let interconnection $\Lambda\in\Real^{p\times p}$ be connected,
$r\in\Real^{p}$ and symmetric positive definite matrix
$\Omega\in\Real^{p\times p}$, respectively, satisfy \eqref{eqn:rdt}
and \eqref{eqn:lyapdt}.  Define
\begin{eqnarray}\label{eqn:rhodt}
\rho(\Lambda):=\sigma_{\rm max}(\Omega)\max\{1,\,|\Lambda-I_{p}|^{2}\}\,.
\end{eqnarray}
Let $V(\ex):=\ex^{T}(\Omega\otimes I_{n})\ex$ for
$\ex\in\Real^{np}$. Then, for all $Q:\Natural\to\overline{\Q}_{n}$, the
below inequality
\begin{eqnarray}\label{eqn:Qdt}
\sigma_{\rm min}\left(\sum_{k=0}^{N-1}Q_{k}\right)\geq\varepsilon
\end{eqnarray}
implies that solution of
system~\eqref{eqn:systemkrondt} satisfies
\begin{eqnarray*}
V(\ex(N)-\bar{\ex})
\leq \left(1-\frac{\delta(\varepsilon,\,N)}{\rho(\Lambda)}\right)V(\ex(0)-\bar{\ex})
\end{eqnarray*}
where $\bar{\ex}:=(\one{r^{T}}\otimes I_{n})\ex(0)$.
\end{theorem}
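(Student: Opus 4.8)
The plan is to run the two-case argument from the proof of Theorem~\ref{thm:main} in discrete time, with Lemma~\ref{lem:Vplus} playing the role of Lemma~\ref{lem:Vdot}, Fact~\ref{fact:three} inserted where the continuous proof used $\sigma_{\rm min}(\int Q)$ directly (Lemma~\ref{lem:Vplus} produces a decrement controlled by $Q_{k}^{2}$, not $Q_{k}$), and the Cauchy--Schwarz inequality replacing Fact~\ref{fact:int}. Write $\xi(k):=\ex(k)-\bar\ex$ and ${\bf Q}_{k}:=I_{p}\otimes Q_{k}$. Just as in the proof of Lemma~\ref{lem:Vplus}, $(\one{r^{T}}\otimes I_{n})\ex(k)=\bar\ex$ for all $k$, and since $(\Lambda-I_{p})\one=0$ the shifted state satisfies $\xi(k+1)-\xi(k)=\big((\Lambda-I_{p})\otimes Q_{k}\big)\xi(k)=\big((\Lambda-I_{p})\otimes I_{n}\big){\bf Q}_{k}\xi(k)$, while Lemma~\ref{lem:Vplus} gives $V(\xi(k+1))-V(\xi(k))\le-|{\bf Q}_{k}\xi(k)|^{2}$; hence $V(\xi(\cdot))$ is nonincreasing and $V(\xi(N))\le V(\xi(0))-\sum_{k=0}^{N-1}|{\bf Q}_{k}\xi(k)|^{2}$. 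The case $\xi(0)=0$ is trivial, so I take $\xi(0)\neq0$; also, since $0\preceq Q_{k}\preceq I_{n}$ one has $\sigma_{\rm min}(\sum_{k=0}^{N-1}Q_{k})\le N$, so \eqref{eqn:Qdt} is vacuous unless $\varepsilon\le N$, which I assume. I would then set $\omega:=\varepsilon^{2}/(4N^{3}n^{2})$ and, applying Fact~\ref{fact:three} to \eqref{eqn:Qdt}, record that $\xi(0)^{T}\big(\sum_{k=0}^{N-1}{\bf Q}_{k}^{2}\big)\xi(0)\ge(\varepsilon^{2}/(N^{2}n^{2}))|\xi(0)|^{2}$.

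First case: $|\xi(k)-\xi(0)|\le\omega|\xi(0)|$ for every $k\in\{0,1,\ldots,N-1\}$. Writing $\xi(k)=\xi(0)+b(k)$ with $|b(k)|\le\omega|\xi(0)|$ and expanding, the $b$--$b$ term is nonnegative and, as $|{\bf Q}_{k}^{2}|\le1$, the cross term is at most $2N\omega|\xi(0)|^{2}$ in absolute value; by the choice of $\omega$ this yields $\sum_{k=0}^{N-1}\xi(k)^{T}{\bf Q}_{k}^{2}\xi(k)\ge\big(\varepsilon^{2}/(N^{2}n^{2})-2N\omega\big)|\xi(0)|^{2}=\big(\varepsilon^{2}/(2N^{2}n^{2})\big)|\xi(0)|^{2}$, so $V(\xi(N))\le\big(1-\varepsilon^{2}/(2N^{2}n^{2}\sigma_{\rm max}(\Omega))\big)V(\xi(0))$. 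This is at most $\big(1-\delta(\varepsilon,N)/\rho(\Lambda)\big)V(\xi(0))$, because $\delta(\varepsilon,N)/\rho(\Lambda)\le\varepsilon^{4}/(16N^{8}n^{4}\sigma_{\rm max}(\Omega))\le\varepsilon^{2}/(2N^{2}n^{2}\sigma_{\rm max}(\Omega))$, the last step using $\varepsilon\le N$.

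Second case: there is a least $\bar{k}\in\{1,\ldots,N-1\}$ with $|\xi(\bar{k})-\xi(0)|>\omega|\xi(0)|$ (note $\Lambda\neq I_{p}$ here, else $\xi$ would be constant). From $\xi(k+1)-\xi(k)=\big((\Lambda-I_{p})\otimes I_{n}\big){\bf Q}_{k}\xi(k)$ I get $|{\bf Q}_{k}\xi(k)|\ge\max\{1,|\Lambda-I_{p}|\}^{-1}|\xi(k+1)-\xi(k)|$, hence by telescoping and the triangle inequality $\sum_{k=0}^{\bar{k}-1}|{\bf Q}_{k}\xi(k)|\ge\max\{1,|\Lambda-I_{p}|\}^{-1}|\xi(\bar{k})-\xi(0)|\ge\max\{1,|\Lambda-I_{p}|\}^{-1}\omega|\xi(0)|$. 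Since this sum has $\bar{k}\le N$ terms, Cauchy--Schwarz gives $\sum_{k=0}^{\bar{k}-1}|{\bf Q}_{k}\xi(k)|^{2}\ge\bar{k}^{-1}\big(\sum_{k=0}^{\bar{k}-1}|{\bf Q}_{k}\xi(k)|\big)^{2}\ge\big(\omega^{2}/(N\max\{1,|\Lambda-I_{p}|^{2}\})\big)|\xi(0)|^{2}$. Keeping only the first $\bar{k}$ terms of the decrement and using that $V(\xi(\cdot))$ is nonincreasing, $V(\xi(N))\le V(\xi(0))-\sum_{k=0}^{\bar{k}-1}|{\bf Q}_{k}\xi(k)|^{2}\le\big(1-\omega^{2}/(N\rho(\Lambda))\big)V(\xi(0))$; and $\omega^{2}/N=\varepsilon^{4}/(16N^{7}n^{4})\ge\delta(\varepsilon,N)$, so this is again at most $\big(1-\delta(\varepsilon,N)/\rho(\Lambda)\big)V(\xi(0))$. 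The two cases together give the statement.

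I expect the difficulty to be organisational rather than conceptual. The main thing to watch is that, unlike a continuous trajectory, which by the intermediate value theorem meets the sphere of radius $\omega|\xi(0)|$ exactly, the discrete trajectory may overshoot it; so the second case must be built around the first-passage index $\bar{k}$ and may use only the one-sided inequality $|\xi(\bar{k})-\xi(0)|>\omega|\xi(0)|$ (overshoot never hurts). The second point is that Lemma~\ref{lem:Vplus} delivers a decrement proportional to $|{\bf Q}_{k}\xi(k)|^{2}$, i.e. governed by $Q_{k}^{2}$ rather than $Q_{k}$; recovering a bound on $\sigma_{\rm min}(\sum Q_{k}^{2})$ via Fact~\ref{fact:three} costs the factor $N^{2}n^{2}$, and the exponents in $\delta(\varepsilon,N)=\varepsilon^{4}/(16N^{8}n^{4})$ together with the threshold $\omega=\varepsilon^{2}/(4N^{3}n^{2})$ must be tuned so that both the first-case decrement $\varepsilon^{2}/(2N^{2}n^{2})$ and the second-case decrement $\omega^{2}/N$ dominate $\delta(\varepsilon,N)/\rho(\Lambda)$ — and it is the bound $\varepsilon\le N$ forced by \eqref{eqn:Qdt} that makes the first of these dominations work.
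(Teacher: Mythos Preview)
Your proof is correct and follows essentially the same two-case strategy as the paper's own proof, with the same threshold $\omega=\varepsilon^{2}/(4N^{3}n^{2})$, the same use of Lemma~\ref{lem:Vplus} and Fact~\ref{fact:three} in Case~1, and the same telescoping lower bound on $\sum_{k}|{\bf Q}_{k}\xi(k)|$ in Case~2. The only difference is that in Case~2 you pass from $\sum_{k}|{\bf Q}_{k}\xi(k)|$ to $\sum_{k}|{\bf Q}_{k}\xi(k)|^{2}$ via Cauchy--Schwarz (losing a factor $1/\bar{k}\ge 1/N$), whereas the paper uses the cruder pigeonhole step ``some single term is at least the average'' (losing $1/N^{2}$); your estimate is therefore sharper by a factor of $N$, but both suffice for the stated $\delta(\varepsilon,N)$.
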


\begin{proof}
Given pair $(\varepsilon,\,T)$ let $\omega:=\varepsilon^{2}/(4N^{3}n^{2})$. 
Consider system~\eqref{eqn:systemkrondt}. Let us introduce
\begin{eqnarray*}
\xi(k):=\ex(k)-\bar{\ex}\,.
\end{eqnarray*}
By Lemma~\ref{lem:Vplus} we have
\begin{eqnarray}\label{eqn:Vxidt}
V(\xi(k+1))-V(\xi(k))\leq -\xi^{T}(k)(I_{p}\otimes Q_{k}^{2})\xi(k)\,.
\end{eqnarray}
Also, $\xi$ can be shown to satisfy
\begin{eqnarray}\label{eqn:xiplus}
\xi^{+}=(I_{np}+(\Lambda-I_{p})\otimes Q_{k})\xi\,.
\end{eqnarray} 
Let $Q:\Natural\to\overline{Q}_{n}$ satisfy \eqref{eqn:Qdt}. Then,
regarding the evolution of $\xi(\cdot)$, one of the two following
cases must be.

{\em Case~1:} $|\xi(k)-\xi(0)|\leq \omega|\xi(0)|$ for all
$k\in\{1,\,2,\,\ldots,\,N-1\}$. Let $b(k):=\xi(k)-\xi(0)$ and recall
that $|Q_{k}|\leq 1$. Let ${\bf Q}_{k}:=(I_{p}\otimes Q_{k})$. Note
that then we have $|{\bf Q}_{k}|\leq 1$ as well as, by
Fact~\ref{fact:three},
\begin{eqnarray*}
\sigma_{\rm min}\left(\sum_{k=0}^{N-1}{\bf Q}_{k}^{2}\right)
&=&\sigma_{\rm min}\left(I_{p}\otimes\sum_{k=0}^{N-1}Q_{k}^{2}\right)\\
&\geq&\frac{\varepsilon^{2}}{N^{2}n^{2}}\,.
\end{eqnarray*}
From \eqref{eqn:Vxidt} we can write 
\begin{eqnarray}\label{eqn:case1dt}
V(\xi(N))
&\leq&V(\xi(0))-\sum_{k=0}^{N-1}\xi^{T}(k){\bf Q}_{k}^{2}\xi(k)\nonumber\\
&=&V(\xi(0))-\sum_{k=0}^{N-1}(\xi(0)+b(k))^{T}{\bf Q}_{k}^{2}(\xi(0)+b(k))\nonumber\\
&=&V(\xi(0))-\xi^{T}(0)\left(\sum_{k=0}^{N-1}{\bf Q}_{k}^{2}\right)\xi(0)
-2\sum_{k=0}^{N-1}b^{T}(k){\bf Q}_{k}^{2}\xi(0)-\sum_{k=0}^{N-1}b^{T}(k){\bf Q}_{k}^{2}b(k)\nonumber\\
&\leq& V(\xi(0))-\frac{\varepsilon^{2}}{N^{2}n^{2}}|\xi(0)|^2+2\omega N|\xi(0)|^{2}\nonumber\\
&=&V(\xi(0))-\frac{\varepsilon^{2}}{2N^{2}n^{2}}|\xi(0)|^{2}\nonumber\\
&\leq&\left(1-\frac{\varepsilon^{2}}{2N^{2}n^{2}\sigma_{\rm max}(\Omega)}\right)V(\xi(0))\,.
\end{eqnarray}

{\em Case~2:} $|\xi(\bar{k})-\xi(0)|\geq\omega|\xi(0)|$ for some
$\bar{k}\in\{1,\,2,\,\ldots,\,N-1\}$. We can by
\eqref{eqn:xiplus} write
\begin{eqnarray}\label{eqn:1dt}
\sum_{k=0}^{\bar{k}}|{\bf Q}_{k}\xi(k)|
&=&|\Lambda-I_{p}|^{-1}
\sum_{k=0}^{\bar{k}}|(\Lambda-I_{p})\otimes I_{n}||{\bf Q}_{k}\xi(k)|\nonumber\\
&\geq&|\Lambda-I_{p}|^{-1}
\sum_{k=0}^{\bar{k}}|((\Lambda-I_{p})\otimes Q_{k})\xi(k)|\nonumber\\
&=&|\Lambda-I_{p}|^{-1}
\sum_{k=0}^{\bar{k}}|\xi(k+1)-\xi(k)|\nonumber\\
&\geq&|\Lambda-I_{p}|^{-1}
\left|\sum_{k=0}^{\bar{k}-1}\xi(k+1)-\xi(k)\right|\nonumber\\
&=&|\Lambda-I_{p}|^{-1}|\xi(\bar{k})-\xi(0)|\nonumber\\
&\geq&|\Lambda-I_{p}|^{-1}\omega|\xi(0)|\,.
\end{eqnarray}
Eq.~\eqref{eqn:1dt} implies that there exists $k\in\{0,\,1,\,\ldots,\,\bar{k}\}$ such that
$|{\bf Q}_{k}\xi(k)|\geq|\Lambda-I_{p}|^{-1}\omega|\xi(0)|/N$ which implies
\begin{eqnarray*}
\sum_{k=0}^{N-1}|{\bf Q}_{k}\xi(k)|^{2}\geq|\Lambda-I_{p}|^{-2}\omega^{2}|\xi(0)|^{2}/N^{2}\,.
\end{eqnarray*}
Then, by \eqref{eqn:Vxidt} 
we can write
\begin{eqnarray}\label{eqn:case2dt}
V(\xi(N))
&=&V(\xi(0))-\sum_{k=0}^{N-1}\xi^{T}(k){\bf Q}_{k}^{2}\xi(k)\nonumber\\
&\leq&V(\xi(0))-\frac{\omega^{2}}{|\Lambda-I_{p}|^{2}N^{2}}|\xi(0)|^{2}\nonumber\\
&\leq&\left(1-\frac{\varepsilon^{4}}{16N^{8}n^{4}|\Lambda-I_{p}|^{2}\sigma_{{\rm max}}(\Omega)}
\right)V(\xi(0))
\end{eqnarray}
The result follows by \eqref{eqn:deltadt}, \eqref{eqn:rhodt},
\eqref{eqn:case1dt}, and \eqref{eqn:case2dt}.
\end{proof}

Theorem~\ref{thm:maindt} suggests the following definition.

\begin{definition}\label{def:suffdt}
Function $Q:\Natural\to \Q_{n}$ is said to be 
{\em sufficiently exciting} if there exists a
sequence of pairs of positive real numbers
$(\varepsilon_{i},\,N_{i})_{i=1}^{\infty}$ satisfying
\begin{eqnarray}\label{eqn:eidt}
\sigma_{\rm min}\left(\sum_{k=k_{i}}^{k_{i}+N_{i}-1}
Q_{k}\right)\geq\varepsilon_{i}
\end{eqnarray}
for $k_{i}=\sum_{j=1}^{i-1}N_{j}$ with $k_{1}=0$, and
\begin{eqnarray}\label{eqn:didt}
\lim_{N\to\infty}\sum_{i=1}^{N}\delta(\varepsilon_{i},\,N_{i})=\infty
\end{eqnarray}
where $\delta(\cdot,\,\cdot)$ is as defined in \eqref{eqn:deltadt}.
\end{definition}

The following result is the discrete-time analogue of
Theorem~\ref{thm:suff}. The proof would have been similar to that of
Theorem~\ref{thm:suff}, had it not been absent from the paper.

\begin{theorem}\label{thm:suffdt}
Let $Q:\Natural\to\overline{\Q}_{n}$ be sufficiently exciting.
Then, for all connected interconnection $\Lambda\in\Real^{p\times p}$,
solutions $x_{i}(\cdot)$ of array~\eqref{eqn:systemdt} synchronize to
\begin{eqnarray*}
\bar{x}(k)\equiv(r^{T}\otimes I_{n})\ex(0)
\end{eqnarray*}
where $r\in\Real^{p}$ satisfies \eqref{eqn:rdt}.
\end{theorem}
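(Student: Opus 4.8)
The plan is to transcribe the (omitted) proof of Theorem~\ref{thm:suff} into the discrete-time setting, with sums replacing integrals and Theorem~\ref{thm:maindt} replacing Theorem~\ref{thm:main}. So I would fix a connected interconnection $\Lambda\in\Real^{p\times p}$, take $r\in\Real^{p}$ satisfying \eqref{eqn:rdt} and symmetric positive definite $\Omega\in\Real^{p\times p}$ satisfying \eqref{eqn:lyapdt}, set $V(\ex):=\ex^{T}(\Omega\otimes I_{n})\ex$ and $\bar{\ex}:=(\one r^{T}\otimes I_{n})\ex(0)$, and write $\delta_{i}:=\delta(\varepsilon_{i},\,N_{i})$ where $(\varepsilon_{i},\,N_{i})_{i=1}^{\infty}$ is the sequence supplied by sufficient excitation and $k_{i}=\sum_{j=1}^{i-1}N_{j}$.

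The first step is to apply Theorem~\ref{thm:maindt} blockwise, on each window $\{k_{i},\,\ldots,\,k_{i+1}\}$. Theorem~\ref{thm:maindt} is written with time origin $0$, but its proof invokes the dynamics only through the matrices $Q_{k}$ over the block considered, so translating the time index and using \eqref{eqn:eidt} --- which is precisely the hypothesis $\sigma_{\rm min}(\sum_{k=k_{i}}^{k_{i}+N_{i}-1}Q_{k})\geq\varepsilon_{i}$ needed for that block --- yields $V(\ex(k_{i+1})-\bar{\ex})\leq(1-\delta_{i}/\rho(\Lambda))V(\ex(k_{i})-\bar{\ex})$ with $\rho(\Lambda)$ as in \eqref{eqn:rhodt}. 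One should note, as in Lemma~\ref{lem:Vplus}, that $(\one r^{T}\otimes I_{n})\ex(k)=\bar{\ex}$ for every $k$, so $\bar{\ex}$ is the correct centering vector on all blocks and $\ex(k_{i})-\bar{\ex}$ serves as the ``initial condition'' of the translated system. Telescoping then gives
\[
V(\ex(k_{i})-\bar{\ex})\leq V(\ex(0)-\bar{\ex})\prod_{j=1}^{i-1}\Bigl(1-\frac{\delta_{j}}{\rho(\Lambda)}\Bigr).
\]

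Next I would set $a_{i}:=\delta_{i}/\rho(\Lambda)$ and check $0\leq a_{i}<1$: since $Q_{k}\in\overline{\Q}_{n}$ forces $\varepsilon_{i}\leq N_{i}$, \eqref{eqn:deltadt} gives $\delta_{i}\leq 1/16$, while \eqref{eqn:lyapdt} gives $\Omega\geq I_{p}$ and hence $\rho(\Lambda)\geq 1$. By \eqref{eqn:didt}, $\sum_{i}a_{i}=\rho(\Lambda)^{-1}\sum_{i}\delta_{i}=\infty$, so Fact~\ref{fact:sum} yields $\prod_{i=1}^{\infty}(1-a_{i})=0$, whence (using $V(\xi)\geq|\xi|^{2}$) $V(\ex(k_{i})-\bar{\ex})\to 0$ and $\ex(k_{i})\to\bar{\ex}$.

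It remains to pass from the subsequence $(k_{i})$ to all $k$. The shifted trajectory $\xi(k):=\ex(k)-\bar{\ex}$ obeys $\xi^{+}=(I_{np}+(\Lambda-I_{p})\otimes Q_{k})\xi$, exactly the form of \eqref{eqn:systemkrondt}, so Theorem~\ref{thm:stabilitydt} (with time origin at $k_{i}$) furnishes $\alpha>0$ depending only on $\Lambda$ with $|\xi(k)|\leq\alpha|\xi(k_{i})|$ for all $k\geq k_{i}$; since each $k$ lies in some block, $|\xi(k)|\to 0$, i.e. $x_{i}(k)\to(r^{T}\otimes I_{n})\ex(0)$ for every $i$. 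I do not expect a genuine obstacle here: the whole argument is a faithful copy of the continuous-time proof, and the only points deserving a line of justification are the two time-translation steps (that Theorems~\ref{thm:maindt} and~\ref{thm:stabilitydt} apply on blocks not starting at $0$), which hold because the constants in both theorems depend only on $\Lambda$ and the excitation is hypothesized blockwise in Definition~\ref{def:suffdt}.
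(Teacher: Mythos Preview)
Your proposal is correct and follows precisely the route the paper indicates (the paper omits the proof, remarking only that it parallels that of Theorem~\ref{thm:suff}). The one small deviation is in the last step: rather than invoking Theorem~\ref{thm:stabilitydt} to pass from the subsequence $(k_{i})$ to all $k$, one can simply use that $V(\ex(\cdot)-\bar{\ex})$ is nonincreasing by Lemma~\ref{lem:Vplus}, so $V(\ex(k)-\bar{\ex})\leq V(\ex(k_{i})-\bar{\ex})$ whenever $k\geq k_{i}$; either argument suffices.
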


Notion of persistence of excitation carries readily to discrete
time. See the below definition.

\begin{definition}
Map $Q:\Natural\to\Q_{n}$ is said to be {\em persistently
exciting} if there exists a pair $(\varepsilon,\,N)$, $\varepsilon>0$
and $N\in\Natural_{\geq 1}$, such that
\begin{eqnarray}\label{eqn:uniformdt}
\sigma_{\rm min}\left(\sum_{k=k_{0}}^{k_{0}+N-1}Q_{k}\right)\geq\varepsilon
\end{eqnarray}
for all $k_{0}\in\Natural$.
\end{definition}

The following theorem is the discrete-time analogue of
Theorem~\ref{thm:suffexp}.  We omit the proof.

\begin{theorem}\label{thm:suffexpdt}
Let interconnection $\Lambda\in\Real^{p\times p}$ be connected and 
function $Q:\Natural\to \overline{\Q}_{n}$ persistently exciting. Then
solutions $x_{i}(\cdot)$ of array~\eqref{eqn:systemdt} exponentially
synchronize to
\begin{eqnarray*}
\bar{x}(k)\equiv(r^{T}\otimes I_{n})\ex(0)
\end{eqnarray*}
where $r\in\Real^{p}$ satisfies \eqref{eqn:rdt}.
\end{theorem}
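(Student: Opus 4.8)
The plan is to mirror the continuous-time argument of Theorem~\ref{thm:suffexp}, using the discrete-time machinery already assembled. Since $Q:\Natural\to\overline{\Q}_{n}$ is persistently exciting, there is a fixed pair $(\varepsilon,\,N)$ with $N\in\Natural_{\geq 1}$ such that \eqref{eqn:uniformdt} holds for every $k_{0}\in\Natural$. Fix a connected interconnection $\Lambda\in\Real^{p\times p}$, let $r\in\Real^{p}$ satisfy \eqref{eqn:rdt}, and let symmetric positive definite $\Omega\in\Real^{p\times p}$ satisfy \eqref{eqn:lyapdt}. Put $V(\ex):=\ex^{T}(\Omega\otimes I_{n})\ex$ and $\bar{\ex}:=(\one r^{T}\otimes I_{n})\ex(0)$.

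First I would invoke Theorem~\ref{thm:maindt} not once but on every length-$N$ window. For each $j\in\Natural$, the shifted trajectory $k\mapsto\ex(jN+k)$ solves a copy of system~\eqref{eqn:systemkrondt} driven by the shifted matrices $k\mapsto Q_{jN+k}$, which by persistence of excitation still satisfies the hypothesis $\sigma_{\rm min}\big(\sum_{k=0}^{N-1}Q_{jN+k}\big)\geq\varepsilon$ of \eqref{eqn:Qdt}. Moreover $(\one r^{T}\otimes I_{n})\ex(k)=\bar{\ex}$ for all $k$ (the conserved quantity identified in the proof of Lemma~\ref{lem:Vplus}), so $\bar{\ex}$ is the same ``center'' for every window. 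Hence Theorem~\ref{thm:maindt} yields, with $\mu:=1-\delta(\varepsilon,\,N)/\rho(\Lambda)\in[0,\,1)$,
\begin{eqnarray*}
V(\ex((j+1)N)-\bar{\ex})\leq\mu\,V(\ex(jN)-\bar{\ex})
\end{eqnarray*}
for every $j\in\Natural$, and iterating gives $V(\ex(kN)-\bar{\ex})\leq\mu^{k}V(\ex(0)-\bar{\ex})$ for all $k\in\Natural$, exactly as displayed in the proof of Theorem~\ref{thm:suffexp}.

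From here I would convert this geometric decay along the subsequence $\{kN\}$ into genuine exponential synchronization. Since $V$ is equivalent to the squared Euclidean norm (with constants $\sigma_{\rm min}(\Omega)$ and $\sigma_{\rm max}(\Omega)$), we get $|\ex(kN)-\bar{\ex}|\leq\sqrt{\sigma_{\rm max}(\Omega)/\sigma_{\rm min}(\Omega)}\,\mu^{k/2}|\ex(0)-\bar{\ex}|$. To fill in the intermediate instants, note that Theorem~\ref{thm:stabilitydt} bounds the trajectory over a single window: starting from $\ex(kN)$, we have $|\ex(kN+\ell)-\bar{\ex}|\leq\alpha|\ex(kN)-\bar{\ex}|$ for $\ell\in\{0,\,1,\,\ldots,\,N-1\}$ with $\alpha$ depending only on $\Lambda$ (applying stability to the centered system $\xi^{+}=(I_{np}+(\Lambda_{\circ}-I_{p})\otimes Q_{k})\xi$). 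Combining, $|\ex(m)-\bar{\ex}|$ decays like $\mu^{\lfloor m/N\rfloor/2}$, which is bounded by $c\,e^{-\alpha' m}$ for suitable $c>0$ and $\alpha'>0$. Reading off coordinates, $x_{i}(k)-\bar{x}$ with $\bar{x}=(r^{T}\otimes I_{n})\ex(0)$ satisfies the required exponential estimate, so the $x_{i}(\cdot)$ exponentially synchronize to $\bar{x}(k)\equiv(r^{T}\otimes I_{n})\ex(0)$.

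The routine-but-not-entirely-trivial point is the reduction to length-$N$ windows with a common center: one must observe that persistence of excitation is shift-invariant and that $\bar{\ex}$ is preserved by the dynamics, so that Theorem~\ref{thm:maindt} genuinely applies on each block rather than only on $[0,\,N]$. The only other thing worth stating carefully is the passage from decay on the subsequence $\{kN\}$ to decay on all of $\Natural$ via the uniform one-window bound from Theorem~\ref{thm:stabilitydt}; this is where the constant $\alpha$ enters and is the ``main obstacle,'' though it is entirely standard. Everything else is a transcription of the proof of Theorem~\ref{thm:suffexp} into discrete time.
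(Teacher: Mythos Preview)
Your proposal is correct and follows exactly the route the paper indicates (the paper omits the proof, pointing to the analogous continuous-time argument of Theorem~\ref{thm:suffexp}): apply Theorem~\ref{thm:maindt} on each length-$N$ window using shift-invariance of persistence of excitation and invariance of $\bar{\ex}$, then iterate to get geometric decay of $V(\ex(kN)-\bar{\ex})$. One small technical point: for the intermediate instants you need not (and strictly cannot) invoke Theorem~\ref{thm:stabilitydt} on the centered system, since $\Lambda_{\circ}=\Lambda-\one r^{T}$ is not an interconnection; it is cleaner to use Lemma~\ref{lem:Vplus} directly, which already shows $V(\ex(\cdot)-\bar{\ex})$ is nonincreasing and hence $|\ex(kN+\ell)-\bar{\ex}|\leq\sqrt{\sigma_{\rm max}(\Omega)/\sigma_{\rm min}(\Omega)}\,|\ex(kN)-\bar{\ex}|$ for $0\leq\ell<N$.
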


\subsection{Negative results in discrete time}

Negative results generated in Subsection~\ref{subsec:negative} are not
peculiar to continuous-time arrays. Counterexamples similar to the
ones constructed in the proofs of Theorem~\ref{thm:neg1} and
Theorem~\ref{thm:neg2} can be obtained in discrete time.  We thus
have the following two theorems.

\begin{theorem}
There exist maps $\Lambda:\Natural\to\Real^{p\times p}$, where
$\Lambda_{k}$ is an interconnection for each $k$, and $Q:\Natural\to\overline{\Q}_{n}$ 
such that system $\ex^{+}=(I_{np}+(\Lambda_{k}-I_{p})\otimes
Q_{k})\ex$ has an {\em unbounded} solution.
\end{theorem}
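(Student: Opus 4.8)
The plan is to recycle the continuous-time counterexample from the proof of Theorem~\ref{thm:neg1}, reading the discrete recursion as a refining Euler discretization of $\dot{\ex}=(\Gamma_{t}\otimes Q_{t})\ex$. Keep $p=4$, $n=2$, and reuse the same building blocks $\Gamma_{\rm a},\ldots,\Gamma_{\rm d}$ and $Q_{\rm a},\ldots,Q_{\rm d}$ from that proof; note that $Q_{\rm a},\ldots,Q_{\rm d}$ already lie in $\overline{\Q}_{2}$. First I would fix an integer $m\geq 10$, set $h:=10/m\leq 1$, and for $j\in\{{\rm a},{\rm b},{\rm c},{\rm d}\}$ define $\Lambda^{(j)}:=I_{4}+h\,\Gamma_{j}$. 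Since each $\Gamma_{j}$ has diagonal entries $\geq -1$, off-diagonal entries in $\{0,1\}$, and satisfies $\Gamma_{j}\one=0$, the matrix $\Lambda^{(j)}$ has nonnegative entries and unit row sums, hence is a legitimate discrete-time interconnection with $\Lambda^{(j)}-I_{4}=h\,\Gamma_{j}$. Then let $\Lambda_{k}$ and $Q_{k}$ be $4m$-periodic, holding the pair $(\Lambda^{(j)},Q_{j})$ for $m$ consecutive steps in the cyclic order $j={\rm a},{\rm b},{\rm c},{\rm d}$; this keeps $Q_{k}\in\overline{\Q}_{2}$ for all $k$.

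With this choice, one period of the recursion $\ex^{+}=(I_{8}+(\Lambda_{k}-I_{4})\otimes Q_{k})\ex$ acts by $\ex\mapsto\mathbf{A}(m)\ex$, where
\begin{eqnarray*}
\mathbf{A}(m):=M_{\rm d}^{m}M_{\rm c}^{m}M_{\rm b}^{m}M_{\rm a}^{m}\,,\qquad M_{j}:=I_{8}+h\,\Gamma_{j}\otimes Q_{j}\,,
\end{eqnarray*}
the factors ordered so that phase ${\rm a}$ is applied first. Because $(I+hM)^{m}\to e^{10M}$ as $m\to\infty$ along $h=10/m$, and matrix multiplication is continuous, $\mathbf{A}(m)$ converges to $e^{(\Gamma_{\rm d}\otimes Q_{\rm d})10}e^{(\Gamma_{\rm c}\otimes Q_{\rm c})10}e^{(\Gamma_{\rm b}\otimes Q_{\rm b})10}e^{(\Gamma_{\rm a}\otimes Q_{\rm a})10}$, which is exactly the monodromy matrix $\mathbf{A}$ of the proof of Theorem~\ref{thm:neg1} and has an eigenvalue of modulus $\approx 2$. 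By continuity of the spectral radius, $\mathbf{A}(m)$ therefore has an eigenvalue $\lambda$ with $|\lambda|>1$ for all sufficiently large $m$ — equivalently, one may simply fix one such $m$ and check numerically, just as in the proof of Theorem~\ref{thm:neg1}, that $\mathbf{A}(m)$ has an eigenvalue outside the unit circle. Taking $\ex(0)$ to be any nonzero real vector in the (one- or two-dimensional) real invariant subspace of $\mathbf{A}(m)$ attached to $\lambda$ makes $|\ex(4mk)|=|\mathbf{A}(m)^{k}\ex(0)|$ grow like $|\lambda|^{k}\to\infty$, so the associated solution is unbounded.

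The argument is essentially bookkeeping; the only points needing care are (i) that $h\leq 1$ is precisely what keeps the $\Lambda^{(j)}$ entrywise nonnegative, and (ii) the elementary limit $(I+hM)^{10/h}\to e^{10M}$ with continuity of $\rho(\cdot)$, which reduces everything to Theorem~\ref{thm:neg1}. It is worth recording why one cannot bypass letting the per-phase step count grow: by Theorem~\ref{thm:stabilitydt}, for any \emph{fixed} interconnection $\Lambda$ and any $Q:\Natural\to\overline{\Q}_{n}$ the solutions remain bounded, so the instability is genuinely a time-variation effect, and — exactly as in continuous time — it only surfaces once each phase is held long enough for $M_{j}^{m}$ to approach its limiting (oblique) projection. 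The one step that demands actual computation, inherited verbatim from the proof of Theorem~\ref{thm:neg1}, is the verification that the continuous-time monodromy $e^{(\Gamma_{\rm d}\otimes Q_{\rm d})10}\cdots e^{(\Gamma_{\rm a}\otimes Q_{\rm a})10}$ is expanding; everything else is routine.
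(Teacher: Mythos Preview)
Your proposal is correct and in the spirit of the paper, which does not actually give a proof but merely asserts that ``counterexamples similar to the ones constructed in the proofs of Theorem~\ref{thm:neg1} and Theorem~\ref{thm:neg2} can be obtained in discrete time.'' Your Euler-discretization argument --- reducing the discrete monodromy $\mathbf{A}(m)$ to the continuous-time monodromy $\mathbf{A}$ via $(I+hM)^{10/h}\to e^{10M}$ together with continuity of the spectral radius --- is a clean and rigorous way to fill in what the paper leaves implicit.
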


\begin{theorem}
There exist connected interconnection $\Lambda\in\Real^{p\times p}$ and
and map $Q:\Natural\to \overline{\Q}_{n}$ satisfying
\begin{eqnarray*}
\liminf_{N\to\infty}\frac{1}{N}\ \sigma_{\rm min}\left(\sum_{k=0}^{N}Q_{k}\right)>0
\end{eqnarray*}
such that solutions $x_{i}(\cdot)$ of array~\eqref{eqn:systemdt} do
{\em not} synchronize.
\end{theorem}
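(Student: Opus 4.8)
The plan is to mirror the construction used to prove Theorem~\ref{thm:neg2}, which is in fact somewhat cleaner in discrete time since there is no polar-coordinate differential equation to integrate: the map $I_{n}-Q_{k}$ acts explicitly. I would take $p=n=2$ and the connected interconnection $\Lambda=\left[\begin{array}{rr}0&1\\0&1\end{array}\right]$ (connected since node~$2$ is reachable from node~$1$). Under array~\eqref{eqn:systemdt} with this $\Lambda$ one has $x_{2}^{+}=x_{2}$, so setting $x_{2}(0)=0$ forces $x_{2}(k)\equiv 0$ and reduces the dynamics to $x_{1}^{+}=(I_{2}-Q_{k})x_{1}$; thus the two solutions synchronize if and only if $x_{1}(k)\to 0$. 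I would then let $Q_{k}$ be the rank-one orthogonal projection onto the line spanned by $[\cos\vartheta_{k}\ \sin\vartheta_{k}]^{T}$, so $Q_{k}\in\overline{\Q}_{2}$ and $I_{2}-Q_{k}$ is the orthogonal projection onto the perpendicular line, at angle $\vartheta_{k}+\pi/2$. A one-line computation gives that if $x_{1}(k)$ already lies on the line at angle $\vartheta_{k-1}+\pi/2$ --- which for $k\geq 1$ it automatically does, being the image of $I_{2}-Q_{k-1}$ --- then $|x_{1}(k+1)|=|x_{1}(k)|\,|\cos(\vartheta_{k}-\vartheta_{k-1})|$. Initializing $x_{1}(0)$ at angle $\vartheta_{0}+\pi/2$ then yields $|x_{1}(k)|=|x_{1}(0)|\prod_{\ell=1}^{k-1}\cos(\vartheta_{\ell}-\vartheta_{\ell-1})$.

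The angle sequence $(\vartheta_{k})$ is built in geometrically growing blocks. In block~$j$ ($j=1,2,\ldots$) I let $\vartheta$ advance by the constant step $\eta_{j}:=2\pi/N_{j}$ for exactly $N_{j}$ steps, where $N_{j}$ is an integer close to $2\pi\cdot 2^{j}$; over a complete block $\vartheta$ makes one full turn, and $\eta_{j}\asymp 2^{-j}\to 0$. Two things then need to be verified. First, non-synchronization: since $\sum_{k}(\vartheta_{k}-\vartheta_{k-1})^{2}=\sum_{j}N_{j}\eta_{j}^{2}=2\pi\sum_{j}\eta_{j}<\infty$, each factor $\cos(\vartheta_{\ell}-\vartheta_{\ell-1})$ is near~$1$ and the infinite product converges to a strictly positive number, so $|x_{1}(k)|$ stays bounded away from zero and array~\eqref{eqn:systemdt} fails to synchronize (recall $x_{2}\equiv 0$). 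Second, the excitation bound: writing $Q_{k}=\tfrac12 I_{2}+\tfrac12 M(2\vartheta_{k})$ with $M(\psi)$ the trace-free symmetric matrix $\left[\begin{array}{rr}\cos\psi&\sin\psi\\\sin\psi&-\cos\psi\end{array}\right]$, and using that $N_{j}\eta_{j}=2\pi$ so that the finite trigonometric sums $\sum_{k\in\text{block }j}\cos 2\vartheta_{k}$ and $\sum_{k\in\text{block }j}\sin 2\vartheta_{k}$ both vanish, the contribution of any run of complete blocks to $\sum_{k=0}^{N}Q_{k}$ equals exactly one half its number of steps times $I_{2}$, while the trailing partial block is a sum of SPSD matrices, hence $\succeq 0$. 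Consequently $\sigma_{\rm min}\!\left(\sum_{k=0}^{N}Q_{k}\right)\geq \tfrac12 S_{J-1}$ whenever step~$N$ lies in block~$J$, where $S_{J-1}:=\sum_{i<J}N_{i}$; and since $N<S_{J-1}+N_{J}$ with $S_{J-1}/(S_{J-1}+N_{J})\to\tfrac12$ by the geometric growth of the $N_{j}$, we get $\liminf_{N\to\infty}\tfrac1N\,\sigma_{\rm min}\!\left(\sum_{k=0}^{N}Q_{k}\right)\geq\tfrac14>0$.

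The projection algebra behind the $|x_{1}|$-recursion and the closed-form for the trigonometric sums that cancels the oscillatory part of $\sum Q_{k}$ over complete blocks are routine. The point that needs care --- the discrete analogue of the $\tau_{k-1}/\tau_{k}\to 1/2$ estimate in the proof of Theorem~\ref{thm:neg2} --- is the joint choice of the two block parameters: $\eta_{j}$ must decay quickly enough that $\sum_{j}N_{j}\eta_{j}^{2}$ converges (so that $x_{1}$ does not decay to zero), while $N_{j}$ must grow quickly enough (geometrically) that no single incomplete block can ever dominate the running sum $\sum_{k=0}^{N}Q_{k}$, keeping the lower bound on $\sigma_{\rm min}$ linear in $N$. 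The choice $N_{j}\approx 2\pi\cdot 2^{j}$, $\eta_{j}=2\pi/N_{j}$ achieves both, and all that remains is to make the two limits ($\sum_{j}N_{j}\eta_{j}^{2}<\infty$ and $S_{J-1}/S_{J}\to 1/2$) quantitative.
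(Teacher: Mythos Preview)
Your proposal is correct and is precisely the discrete-time transcription of the construction in the proof of Theorem~\ref{thm:neg2}; since the paper itself omits the proof and merely states that ``counterexamples similar to the ones constructed in the proofs of Theorem~\ref{thm:neg1} and Theorem~\ref{thm:neg2} can be obtained in discrete time,'' your argument is exactly what the paper intends. The only places to tidy are bookkeeping: make sure $N_{j}\geq 3$ so that the geometric sum $\sum_{\ell=0}^{N_{j}-1}e^{i4\pi\ell/N_{j}}$ genuinely vanishes, and fix once and for all whether block~$j$ comprises indices $\{S_{j-1},\ldots,S_{j}-1\}$ or $\{S_{j-1}+1,\ldots,S_{j}\}$ so that the product formula $|x_{1}(k)|=|x_{1}(0)|\prod_{\ell}|\cos(\vartheta_{\ell}-\vartheta_{\ell-1})|$ and the block-sum cancellation are indexed consistently.
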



\subsection{Observability grammian and synchronizability in discrete time} 
As is the case with continuous-time arrays, there is a close relation
between the observability grammian and synchronizability in discrete
time. In this subsection we will provide definitions and theorems
through which we formalize that relation.

\begin{definition}\label{def:asyobsdt}
For $A:\Natural\to\Real^{n\times n}$ and $C:\Natural\to\Real^{m\times n}$,
pair $(C,\,A)$ is said to be {\em asymptotically observable} if 
the summand of 
the observability grammian, $k\mapsto \Phi_{A}^{T}(k,\,0)C^{T}(k)C(k)
\Phi_{A}(k,\,0)$, is sufficiently exciting.
\end{definition}

\begin{definition}\label{def:uniobsdt}
For $A:\Natural\to\Real^{n\times n}$ and $C:\Natural\to\Real^{m\times
n}$, pair $(C,\,A)$ is said to be {\em uniformly observable} if there
exists a pair $(\varepsilon,\,N)$, $\varepsilon>0$ and
$N\in\Natural_{\geq 1}$, such that
\begin{eqnarray*}
\sigma_{\rm min}(W_{\rm o}(k,\,k+N))\geq\varepsilon
\end{eqnarray*}
for all $k\in\Natural$.
\end{definition}

The below result follows from Theorem~\ref{thm:suffdt}.

\begin{theorem}\label{thm:ACmaindt}
Let $A:\Natural\to\Real^{n\times n}$ and $C:\Natural\to\Real^{m\times
n}$ satisfy Assumption~\ref{assume:ACdt} (with constants $\bar{a}$ and
$\bar{c}$.) If pair $(C,\,A)$ is asymptotically observable, then it is
synchronizable. In particular, if we choose
$L:\Natural\to\Real^{n\times n}$ as
\begin{eqnarray}\label{eqn:Ldt}
L(k):=(\bar{a}\bar{c})^{-1}\Phi_{A}(k+1,\,0)\Phi_{A}^{T}(k,\,0)C^{T}(k)
\end{eqnarray} 
then for each $\Lambda\in\G_{>0}$ solutions $x_{i}(\cdot)$ of 
array~\eqref{eqn:arraydt} synchronize to 
\begin{eqnarray*}
\bar{x}(k):=(r^{T}\otimes\Phi_{A}(k,\,0))
\left[\!\!
\begin{array}{c}
x_{1}(0)\\
\vdots\\
x_{p}(0)
\end{array}\!\!
\right]
\end{eqnarray*}
where $r\in\Real^{p}$ satisfy \eqref{eqn:rdt}.
\end{theorem}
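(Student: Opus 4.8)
The plan is to carry the closed loop of array~\eqref{eqn:arraydt} back through the state transition matrix, exactly as in the proof of Theorem~\ref{thm:ACmain}, so that it becomes an instance of the SPSD‑coupled array~\eqref{eqn:systemdt}, and then invoke Theorem~\ref{thm:suffdt}. Fix a connected $\Lambda\in\G_{>0}$, let $r$ satisfy \eqref{eqn:rdt}, and take $L$ as in \eqref{eqn:Ldt}; this $L$ is bounded because each of $\Phi_A(k+1,0)$, $\Phi_A(k,0)$ and $C(k)$ is bounded under Assumption~\ref{assume:ACdt}. With $u_i=L(k)z_i$, system~\eqref{eqn:arraydt} reads $x_i^+=A(k)x_i+L(k)C(k)\sum_{j\neq i}\lambda_{ij}(x_j-x_i)$.

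Next I would introduce the auxiliary variables $\xi_i(k):=\Phi_A(0,k)x_i(k)$. Here Assumption~\ref{assume:ACdt}(a) is exactly what is required: invertibility of each $A(k)$ makes $\Phi_A(0,k)=\Phi_A^{-1}(k,0)$ well defined --- this is the first place the discrete argument genuinely departs from the continuous one, which needs no invertibility. Using the identities $\Phi_A(0,k+1)A(k)=\Phi_A(0,k)$ and $\Phi_A(0,k+1)\Phi_A(k+1,0)=I_n$ (both immediate from $\Phi_A(k+1,0)=A(k)\Phi_A(k,0)$), together with $x_j-x_i=\Phi_A(k,0)(\xi_j-\xi_i)$, a short computation gives
\[
\xi_i^+ \;=\; \xi_i + Q_k\sum_{j\neq i}\lambda_{ij}(\xi_j-\xi_i),
\qquad Q_k \;:=\; c_0\,\Phi_A^T(k,0)C^T(k)C(k)\Phi_A(k,0),
\]
where $c_0>0$ is the constant appearing in \eqref{eqn:Ldt}, chosen (depending only on $\bar{a},\bar{c}$) so that $Q_k\in\overline{\Q}_n$ for all $k$. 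This is precisely array~\eqref{eqn:systemdt}, and with the \emph{same} interconnection $\Lambda$: unlike in continuous time there is no room to rescale the interconnection to absorb a large $|Q_k|$ (replacing $\Lambda$ by $I_p+\mu(\Lambda-I_p)$, $\mu>1$, need not give a matrix with nonnegative entries), which is why the constant in $L$ must be calibrated at the outset.

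It then remains to transfer the excitation hypothesis. Asymptotic observability of $(C,A)$ (Definition~\ref{def:asyobsdt}) says $k\mapsto\Phi_A^T(k,0)C^T(k)C(k)\Phi_A(k,0)$ is sufficiently exciting; scaling by the positive constant $c_0$ rescales each $\varepsilon_i$ by $c_0$, hence each $\delta(\varepsilon_i,N_i)$ by $c_0^4$ (see \eqref{eqn:deltadt}), so $\sum_i\delta(\varepsilon_i,N_i)$ still diverges and $\{Q_k\}$ is sufficiently exciting. Since in addition $Q:\Natural\to\overline{\Q}_n$, Theorem~\ref{thm:suffdt} applies to the $\xi$‑array, so the $\xi_i(\cdot)$ synchronize to $(r^T\otimes I_n)[\,\xi_1(0)^T\ \cdots\ \xi_p(0)^T\,]^T$, which equals $(r^T\otimes I_n)[\,x_1(0)^T\ \cdots\ x_p(0)^T\,]^T$ since $\xi_i(0)=x_i(0)$. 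Finally $x_i(k)=\Phi_A(k,0)\xi_i(k)$ with $|\Phi_A(k,0)|\le\bar{a}$ forces the $x_i(\cdot)$ to synchronize too, to $\Phi_A(k,0)(r^T\otimes I_n)[\cdots]^T=(r^T\otimes\Phi_A(k,0))[\,x_1(0)^T\ \cdots\ x_p(0)^T\,]^T$, the asserted limit.

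There is no deep obstacle; the two points to get right are (i) the change of variables, legitimate only because of the invertibility clause in Assumption~\ref{assume:ACdt}(a), and (ii) the observation that the discrete model leaves no freedom to rescale $\Lambda$, so $Q_k$ must be normalized into $\overline{\Q}_n$ up front rather than afterwards as one may do in continuous time. The rest is routine bookkeeping once the $\xi$‑dynamics are identified with \eqref{eqn:systemdt}.
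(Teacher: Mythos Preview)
Your proposal is correct and follows precisely the route the paper intends: the paper does not write out this proof but states that the result ``follows from Theorem~\ref{thm:suffdt}'', and your argument is the direct discrete-time transcription of the proof of Theorem~\ref{thm:ACmain}---change variables via $\xi_i(k)=\Phi_A(0,k)x_i(k)$, identify the resulting dynamics with array~\eqref{eqn:systemdt}, check that the normalized summand is sufficiently exciting, and invoke Theorem~\ref{thm:suffdt}. Your two side remarks (that invertibility of $A(k)$ is now genuinely needed for the change of variables, and that the normalization must be built into $L$ since one cannot rescale a discrete-time interconnection $\Lambda$ upward) are accurate and explain exactly why the constant $(\bar a\bar c)^{-1}$ appears in \eqref{eqn:Ldt} but not in its continuous-time analogue \eqref{eqn:L}.
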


Theorem~\ref{thm:suffexpdt} yields the following result.

\begin{theorem}
Let $A:\Natural\to\Real^{n\times n}$ and $C:\Natural\to\Real^{m\times n}$
satisfy Assumption~\ref{assume:ACdt} (with constants $\bar{a}$ and
$\bar{c}$.) Then pair $(C,\,A)$ is synchronizable if
it is uniformly observable. In particular, if we choose $L:\Natural\to\Real^{n\times n}$
as in \eqref{eqn:Ldt} then for each $\Lambda\in\G_{>0}$ solutions $x_{i}(\cdot)$ of 
array~\eqref{eqn:arraydt} exponentially synchronize to 
\begin{eqnarray*}
\bar{x}(k):=(r^{T}\otimes\Phi_{A}(k,\,0))
\left[\!\!
\begin{array}{c}
x_{1}(0)\\
\vdots\\
x_{p}(0)
\end{array}\!\!
\right]
\end{eqnarray*}
where $r\in\Real^{p}$ satisfy \eqref{eqn:rdt}.
\end{theorem}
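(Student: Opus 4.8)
The plan is to transcribe the proof of Theorem~\ref{thm:ACmain} almost verbatim, the sole structural change being that the concluding appeal is to Theorem~\ref{thm:suffexpdt} rather than to Theorem~\ref{thm:suffdt}; the only real work is to check that uniform observability makes the relevant SPSD sequence \emph{persistently} exciting, not merely sufficiently exciting.

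First take $L$ as in \eqref{eqn:Ldt}; it is bounded by Assumption~\ref{assume:ACdt}. Fix a connected interconnection $\Lambda\in\G_{>0}$, let $r$ satisfy \eqref{eqn:rdt}, and consider array~\eqref{eqn:arraydt} with $u_i=L(k)z_i$. Introduce the auxiliary variables $\xi_i(k):=\Phi_A(0,\,k)x_i(k)$, which are well defined since $A(k)$ is invertible for every $k$ by Assumption~\ref{assume:ACdt}(a). Using $\Phi_A(0,\,k+1)=\Phi_A(0,\,k)A^{-1}(k)$, the identity $\Phi_A(k+1,\,0)=A(k)\Phi_A(k,\,0)$, and the explicit form of $L$, a short computation cancels the ``drift'' term and leaves
\[
\xi_i^{+}=\xi_i+Q_k\sum_{j\neq i}\lambda_{ij}(\xi_j-\xi_i),\qquad
Q_k:=(\bar a\bar c)^{-1}\Phi_A^{T}(k,\,0)C^{T}(k)C(k)\Phi_A(k,\,0),
\]
which is exactly array~\eqref{eqn:systemdt} driven by the \emph{same} connected interconnection $\Lambda$ (the normalization is carried entirely by $L$, so, unlike the continuous-time case, $\Lambda$ is not rescaled). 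By Assumption~\ref{assume:ACdt}, $Q_k\in\overline{\Q}_n$ for all $k$, and $\xi_i(0)=x_i(0)$.

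The only genuinely new estimate is that $k\mapsto Q_k$ is persistently exciting. Factoring $\Phi_A(k,\,0)=\Phi_A(k,\,k_0)\Phi_A(k_0,\,0)$ and summing over $k\in\{k_0,\ldots,k_0+N-1\}$ yields the congruence identity
\[
\sum_{k=k_0}^{k_0+N-1}Q_k=(\bar a\bar c)^{-1}\,\Phi_A^{T}(k_0,\,0)\,W_{\rm o}(k_0,\,k_0+N)\,\Phi_A(k_0,\,0).
\]
If $(C,\,A)$ is uniformly observable, there is a pair $(\varepsilon,\,N)$ with $\sigma_{\rm min}(W_{\rm o}(k_0,\,k_0+N))\geq\varepsilon$ for all $k_0$. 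Since $\Phi_A(k_0,\,0)$ is invertible with $|\Phi_A(k_0,\,0)^{-1}|=|\Phi_A(0,\,k_0)|\leq\bar a$, one has $\sigma_{\rm min}(\Phi_A(k_0,\,0))\geq 1/\bar a$, and the elementary bound $\sigma_{\rm min}(\Phi^{T}M\Phi)\geq\sigma_{\rm min}(M)\sigma_{\rm min}(\Phi)^2$ for symmetric $M\geq 0$ gives $\sigma_{\rm min}\bigl(\sum_{k=k_0}^{k_0+N-1}Q_k\bigr)\geq\varepsilon/(\bar a^{3}\bar c)$, uniformly in $k_0$. Hence $Q$ is persistently exciting. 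I expect this to be the only nontrivial point, and it is precisely here that the \emph{two-sided} bound on the transition matrix in Assumption~\ref{assume:ACdt}(a) is used: without a uniform bound on $\Phi_A$ in backward time, $\sigma_{\rm min}(\Phi_A(k_0,\,0))$ could fail to be bounded away from zero and the reduction would break down.

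Finally, apply Theorem~\ref{thm:suffexpdt} to array~\eqref{eqn:systemdt} in the variables $\xi_i$: the $\xi_i(\cdot)$ exponentially synchronize to $(r^{T}\otimes I_n)\ex(0)$, where $\ex(0)=[x_1(0)^{T}\ \cdots\ x_p(0)^{T}]^{T}$. Since $x_i(k)=\Phi_A(k,\,0)\xi_i(k)$ with $|\Phi_A(k,\,0)|\leq\bar a$, we get $|x_i(k)-\Phi_A(k,\,0)(r^{T}\otimes I_n)\ex(0)|\leq\bar a\,|\xi_i(k)-(r^{T}\otimes I_n)\ex(0)|$, so the $x_i(\cdot)$ exponentially synchronize as well, to $\Phi_A(k,\,0)(r^{T}\otimes I_n)\ex(0)=(r^{T}\otimes\Phi_A(k,\,0))\ex(0)=\bar x(k)$, which is the asserted limit. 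Every step outside the persistence-of-excitation estimate is a line-by-line copy of the continuous-time argument.
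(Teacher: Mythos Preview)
Your proof is correct and is exactly the approach the paper has in mind: the paper's own proof is the one-liner ``Theorem~\ref{thm:suffexpdt} yields the following result,'' and you have simply written out the discrete-time analogue of the proof of Theorem~\ref{thm:ACmain} (as in Theorem~\ref{thm:ACmaindt}), replacing the final appeal to Theorem~\ref{thm:suffdt} by Theorem~\ref{thm:suffexpdt} and supplying the persistence-of-excitation check via the congruence identity and the two-sided bound on $\Phi_A$. Your observation that in discrete time the normalization is absorbed into $L$ rather than into $\Lambda$ is also on target.
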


\section{Conclusion}
We studied synchronization of stable, linear time-varying systems that
are coupled via their outputs. We provided sufficient conditions on
observability grammian for the existence of a bounded linear feedback
law under which the systems synchronize for all fixed connected
interconnections. Related to the main problem, we also studied an
array of coupled integrators with identical time-varying output
matrices that are symmetric positive semi-definite. We showed, via
Lyapunov arguments that, the trajectories of this array stay bounded.
Moreover, if the interconnection is connected and output matrix
satisfies some observability condition, then the systems were
shown to reach consensus.

\bibliographystyle{plain}         
\bibliography{references}            
\end{document}